\numberwithin{equation}{section} \parskip1mm
\newcommand{\R}{\Rz}
\newcommand{\gciapo}{\widehat g}
\newcommand{\betaciapo}{\widehat \beta}
\newcommand{\N}{\Nz}
\newcommand{\Rz}{\mathbb{R}}
\newcommand{\RR}{\mathbb{R}}
\newcommand{\RN}{\mathbb{R}^N}
\newcommand{\RdN}{\mathbb{R}^{2N}}
\newcommand{\Nz}{\mathbb{N}}
\newcommand{\loc}{\text{\rm loc}}
\DeclareMathOperator{\sign}{sign}
\newcommand{\quext}{\quad\text}
\newcommand{\io}{\int_\Omega}
\newcommand{\barO}{\overline{\Omega}}
\newcommand{\dix}{{\rm d} x}
\newcommand{\dit}{{\rm d} t}
\newcommand{\diy}{{\rm d} y}
\newcommand{\dir}{{\rm d} r}
\newcommand{\dx}{{\rm d} x}
\newcommand{\dy}{{\rm d} y}
\newcommand{\Asig}{\mathfrak{A}_\sigma}
\newcommand{\As}{\mathfrak{A}_s}
\newtheorem{corollary}{Corollary}[section]
\newtheorem{propo}{Proposition}[section]
\newtheorem{remark}{Remark}[section]
\newtheorem{lemma}{Lemma}[section]
\newtheorem{defi}{Definition}[section]
\newtheorem{theorem}{Theorem}
\newcommand{\LR}{L^2(\RN)}
\newcommand{\LO}{H_0} 
\newcommand{\ichi}{i}
\newcommand{\nii}{iii}
\newcommand{\san}{ii}
\newcommand{\shi}{iv}
\renewcommand{\d}{\hbox{\rm d}}
\newcommand{\vep}{\varepsilon}
\newtheorem{claim}{Claim}[section]
\newcommand{\lista}{\begin{list}{}{\setlength{\leftmargin}{0.6in}\setlength{\labelwidth}{1.5in}\setlength{\labelsep}{0.2in}}}
\newcommand{\finelista}{\end{list}} 
\newcommand{\Xsigz}{\mathcal{X}_{\sigma 0}}
\newcommand{\Xsz}{\mathcal{X}_{s 0}}
\newcommand{\Xp}{X_p^\sigma}
\newcommand{\Ds}{(-\Delta)^s}
\newcommand{\Dsig}{(-\Delta)^\sigma}
\newcommand{\Dsigm}{(-\Delta)^{\sigma/2}}
\newcommand{\Esi}{\mathbb{E}_{\sigma}}
\newcommand{\J}{\Esi}
\newcommand{\tJ}{\mathbb{\tilde E}_{\sigma}}
\newcommand{\Id}{\mathrm{Id}}
\newcommand{\LS}{\L S}
\newcommand{\perogni}{\forall\,}
\DeclareMathOperator{\deriv}{d}
\newcommand{\ddt}{\frac{\deriv\!{}}{\dit}}
\newcommand{\BBB}{\color{blue}}
\newcommand{\SSS}{\color{black}}
\begin{document}

\title[Fractional Cahn-Hilliard equation]{Convergence of solutions for the fractional Cahn-Hilliard system}

\pagestyle{myheadings}

\author{Goro Akagi}
\address{Mathematical Institute, Tohoku University}
\author{Giulio Schimperna}
\address{Dipartimento di Matematica ``F. Casorati'', via Ferrata 5, I--27100 Pavia, Italy}
\author{Antonio Segatti}
\address{Dipartimento di Matematica ``F. Casorati'', via Ferrata 5, I--27100 Pavia, Italy}
\email{akagi@m.tohoku.ac.jp}
\email{giusch04@unipv.it}
\email{antonio.segatti@unipv.it}
\urladdr{}
\urladdr{http://www-dimat.unipv.it/giulio}
\urladdr{http://www-dimat.unipv.it/segatti}
\date{\today}

\keywords{Fractional (Dirichlet) Laplacian, Cahn-Hilliard equation, long-time behavior of solutions, \L ojasiewicz-Simon's inequality, analyticity}
 \subjclass[2000]{35R11, 35B40, 35K20}

\maketitle

 \begin{abstract}
  This paper deals with the Cauchy-Dirichlet problem for the fractional Cahn-Hilliard equation. The main results consist of global (in time) existence of weak solutions, 
  characterization of parabolic smoothing effects (implying under proper condition eventual boundedness of trajectories), and convergence of each solution to a (single) equilibrium. 
  In particular, to prove the convergence result, a variant of the so-called \L ojasiewicz-Simon inequality is provided for the fractional Dirichlet Laplacian 
  and (possibly) non-analytic (but $C^1$) nonlinearities.
 \end{abstract}


\section{Introduction}
\label{sec:intro}

The present paper is concerned with the long-time behavior of solutions to the Cauchy-Dirichlet problem for a fractional version of the Cahn-Hilliard equation. Let $\Omega$ be a bounded domain of $\mathbb{R}^N$ with smooth boundary $\partial \Omega$. For $s, \sigma \in (0,1)$, let us consider
\begin{alignat}{4}\label{eq:fCH}
  & u_t  + \Ds w = 0 \ &\hbox{ in }&  \Omega \times (0,+\infty), \\
  & w = \Dsig u + g(u) \ &\hbox{ in }& \Omega\times (0,+\infty),  \label{eq:chem_pot}\\
  & u|_{t=0} = u_0 \ &\hbox{ in }& \Omega, \label{eq:iniz_bc}\\
  & u = w = 0 \ &\hbox{ in }& \RN\setminus \Omega,\label{eq:bc} 
\end{alignat}
where $\Ds$ (and $\Dsig$) is the so-called \emph{fractional Laplacian}
defined by
\begin{align*}
 \Ds u (x) &= C(N,s) \mbox{ p.v.} \int_{\RN} \dfrac{u(x)-u(y)}{|x-y|^{N+2s}} \, \d y\\
 &= - \dfrac{C(N,s)}2 \int_{\RN} \dfrac{u(x + h) - 2 u(x) + u(x-h)}{|h|^{N+2s}} \, \d h.
\end{align*}
Here p.v.~stands for the Cauchy principal value and $C(N,s)$ is a positive constant determined by $N$ and $s$ only (see, e.g.,~\cite{Dine_Pala_Vald}), 
and hereafter, it will be simply denoted by $C_s$. Moreover, $g$ is the derivative of a 
smooth potential $\gciapo : \R \to \R$ (a typical choice of $\widehat g$ is a double-well potential of the form,
$$
\gciapo (s) = \dfrac{|s|^m}{m} - \dfrac{s^2}2 \quad \mbox{ for } \ s \in \R
$$
and for some $m \geq 4$).

Various types of nonlocal Cahn-Hilliard equation have been proposed and intensively studied by many authors. Among the many
contributions, we may quote with no claim of completeness \cite{ABG,ASSe1,BaHa,GGG,MR} (see also the 
references therein for a more comprehensive bibiliography). Recent applications of the equation mostly
refer to complex (two-phase) fluids \cite{DPG,FGG}, and stochastic models \cite{Cor}. It is worth noting that, in most
of the quoted works, the nonlocal operator is obtained through the convolution with a kernel that is (at least)
summable over $\RN$. This gives rise to a different functional setting compared to here in view of the fact that,
if the kernel is summable, the solution loses the smoothing properties that are proper of parabolic equations. 
Up to our knowledge, the only contributions where the nonlocal operator is obtained by convolution with a singular kernel
are \cite{ABG} and our former work \cite{ASSe1}, where Cahn-Hilliard equations accounting for (different types of)
fractional Laplace operators are studied. 

We recall that the fractional Laplacian may be defined in different ways:
in terms of singular integrals as above; by Fourier transform setting 
$\Ds u = \mathcal F^{-1} \left[ |\xi|^{2s} \mathcal F[u] \right]$ (where $\mathcal F$ and $\mathcal F^{-1}$ denote Fourier and inverse Fourier
transforms, respectively); by extension methods; using the heat semigroup; in a probabilistic way (as a generator of Levy processes).
All these definitions are equivalent to each other once one considers $\Ds$ on the whole space $\RN$. On the other hand, 
when working, as here, on bounded domains some more care is required. Indeed, 
one can formulate the fractional Laplacian $\Ds$ equipped with the solid Dirichlet boundary condition, $u=0$ in $\mathbb R^N \setminus \Omega$ 
(it will be referred as \emph{fractional Dirichlet Laplacian} for short) in a variational fashion by means of the weak form,
$$
\left\langle \Ds u , v \right\rangle_{\Xsz}
= \dfrac{C_s}2 \iint_{\RdN} \dfrac{\left(u(x)-u(y)\right)\left(v(x)-v(y)\right)}{|x-y|^{N+2s}} \, \dx \, \dy
\quad \mbox{ for } u,v \in \Xsz
$$
where $\Xsz$ is a Hilbert space given by
\begin{align}
 \Xsz := \bigg\{ v\in L^2(\RN) \colon & v = 0 \ \mbox{ a.e.~in } \RN \setminus \Omega \nonumber \\
 & \mbox{and } \ (x,y)\mapsto \dfrac{|v(x)-v(y)|^2}{|x-y|^{N+2s}} \in L^1(\RdN) \bigg\} \label{Xsz}
\end{align}
furnished with inner product,
$$
(u,v)_{\Xsz} = \int_\Omega u(x)v(x) \, \d x + \dfrac{C_s}2 \iint_{\RdN} \dfrac{\left(u(x)-u(y)\right)\left(v(x)-v(y)\right)}{|x-y|^{N+2s}} \, \dx \, \dy
$$
for $u,v \in \Xsz$. Then, we may introduce a weak form of 
$\Ds$ as an operator from $\Xsz$ to its dual space $\Xsz'$ (to be precise, 
we will denote it by $\As$ instead of $\Ds$ throughout this paper).
In such a setting, the fractional Cahn-Hilliard equation \eqref{eq:fCH} 
was first studied in~\cite{ASSe1}, where the well-posedness of \eqref{eq:fCH}--\eqref{eq:bc} and its singular 
limits for $s \to 0_+$ or $\sigma \to 0_+$ are treated. On the other hand, the long-time behavior of solutions has 
not yet been studied so far and, in particular, the convergence of trajectories
to $\omega$-limit sets as $t \to +\infty$ remains an open problem. Indeed, \eqref{eq:fCH}--\eqref{eq:bc} 
may have multiple equilibria, for the potential $\widehat g$ may be non-convex and have multiple (local) minimizers. So it is a delicate 
issue whether each solution converges to a single equilibrium as $t \to +\infty$, or, in other words, 
the $\omega$-limit set of each orbit is a singleton.

The problem of convergence of trajectories has been extensively studied in the case
of the classical Cahn-Hilliard system (namely, for $s = \sigma = 1$),
$$
\partial_t u = \Delta w, \quad w = - \Delta u + g(u) \ \mbox{ in } \Omega \times (0,+\infty),
$$
which can be combined into a single equation,
$$
\partial_t u = \Delta \left( - \Delta u + g(u) \right) \ \mbox{ in } \Omega \times (0,+\infty).
$$
Rybka \& Hoffman~\cite{RyHo} proved that each solution converges to a single equilibrium as $t \to +\infty$, provided that $g(\cdot)$ is a polynomial of order $n$ and $2 \leq n < 2^* := 2N/(N-2)_+$, 
using the so-called \emph{\L ojasiewicz-Simon inequality} for the elliptic operator $u \mapsto - \Delta u + g(u)$. The \L ojasiewicz-Simon (\LS)
inequality is an infinite-dimensional extension of the \L ojasiewicz inequality, which is a gradient inequality for analytic functions defined on an open set $U \subset \R^d$ (see~\cite{L,L2} and 
Proposition \ref{P:L} below). The \LS\ inequality was first introduced by L.~Simon~\cite{Simon83}
and has been subsequently applied to various PDEs with gradient(-like) structures by Haraux, Jendoubi, Chill and many other 
authors~(see, e.g., without any claim of completeness, \cite{Jen98,HJ98,Har00,FeiSim00,HJ01,HJK03,Chill03,Huang,CHJ09,HJ11,FeMa}).
A general form of the \L ojasiewicz-Simon 
inequality reads as follows: Let $E : X \to \mathbb R$ be a ``smooth'' functional defined on a Banach space $X$ and let $\phi$ be a critical 
point of $E$, i.e., $E'(\phi) = 0$ in the dual space $X^*$, where $E' : X \to X^*$ denotes the Fr\'echet derivative of $E$. 
Then there exist constants $\theta \in (0,1/2]$ and $\omega,\delta >0$ such that
$$
|E(v)-E(\phi)|^{1-\theta} \leq \omega \|E'(v)\|_{X^*} \quad \mbox{ for all
} v \in X \ \mbox{ satisfying } \ \|v-\phi\|_X < \delta.
$$
The above is, actually, a ``standard'' version of the inequality since there are, indeed,
several variants with different combinations of topologies. The \L ojasiewicz inequality 
(in finite dimensional spaces) essentially requires analyticity of functions. Therefore, smoothness of the energy $E$ is required. 
When moving to infinite dimensions and to PDE applications, however,
the analiticity of the energy $E$ turns out to be quite demanding. For instance, 
parabolic equations with power like nonlinearities with non integer exponents are 
ruled out of the theory. Therefore, several authors
tried to relax analyticity of the enery $E$. In particular, Feireisl \& Simondon~\cite{FeiSim00} established a \L S inequality for a $C^2$ functional of the form
$$
E(u) = \frac 1 2 \int_\Omega |\nabla u|^2 \, \d x + \int_\Omega h(u(x)) \, \d x \quad \mbox{ for } \ u \in X = H^1_0(\Omega),
$$
where $h : \R \to \R$ is bounded and of class $C^2$ over $\R$ and analytic on a subinterval $I = (0,\ell)$ with a 
singularity at the origin (then $E'(u)$ coincides with $-\Delta u + h'(u)$), and proved convergence as $t \to +\infty$ of bounded nonnegative solutions to positive equilibria for nonlinear diffusion equations.

Now, let us turn back to the fractional case and describe the main points of our contribution.
Firstly, \eqref{eq:fCH}--\eqref{eq:bc} cannot be combined into a single equation as 
happens for the classical model. Indeed, the fractional Laplacian is a nonlocal operator. 
In particular, $(-\Delta)^s u$ may have a tail at infinity even if 
$u$ has compact support. Consequently,
one cannot substitute \eqref{eq:chem_pot} into \eqref{eq:fCH}, since the value of $\Ds w(\cdot,t)$ is determined 
from all values of $w(\cdot,t)$ over the whole of $\R^N$, but equations \eqref{eq:fCH} and \eqref{eq:chem_pot} hold only on 
the domain $\Omega$. Secondly, to the best of authors' knowledge, the \L ojasiewicz-Simon inequality has not yet been proved to hold 
in the case of the fractional (Dirichlet) Laplacian, even when it is combined with analytic nonlinearities.

In the present paper, we shall actually extend the \L ojasiewicz-Simon inequality to the fractional (Dirichlet) Laplacian. Moreover, we shall apply it 
to prove convergence of solutions to \eqref{eq:fCH}--\eqref{eq:bc} for a (possibly) non-analytic potential $\widehat g$. On the other hand, proofs of the \L S inequality 
(for the Laplacian, see, e.g.,~\cite{RyHo} and~\cite{FeiSim00}) are often 
based on regularity theories for the elliptic equation $-\Delta u = f$ in $\Omega$, $u|_{\partial \Omega} = 0$ such as Schauder theory (i.e., $C^{2,\alpha}$-regularity) 
and $L^p$-theory of Agmon-Douglis-Nirenberg (i.e., $W^{2,p}$-regularity) as well as Hopf's lemma. However, the fractional (Dirichlet) Laplacian may not enjoy 
such regularity properties; indeed, the solution to the elliptic equation $\Ds u = 1$ in $\Omega$, $u = 0$ on $\R^N \setminus \Omega$ is at most 
of class $C^s(\overline\Omega)$ (see~\cite{RoSe12,RoSe13} for more details). In order to overcome such a difficulty,
we shall introduce a proper functional space $\Xp$ (which takes the role of the domain
of the fractional Laplacian seen as an unbounded linear operator on $L^p(\Omega)$).
Though we cannot properly identify $\Xp$ from the point of view of regularity, we will
be nevertheless able to prove that a \L ojasiewicz-Simon type inequality holds with respect
to its natural norm (see Theorem~\ref{T:LSI-bdd} below). 
The \L ojasiewicz-Simon inequality developed in the present paper can be also applied to verify 
convergence of bounded solutions for the fractional Allen-Cahn equation and fractional
nonlinear diffusion equations whose solutions preserve the sign of initial data (see~\cite{FeiSim00}).

Furthermore, the solution $u = u(x,t)$ of the fractional Cahn-Hilliard system \eqref{eq:fCH}--\eqref{eq:bc} may not preserve sign of initial data 
(like the classical Cahn-Hilliard system). Therefore we shall develop a \L S inequality in such a way as to cover (possibly) sign-changing
equilibria (cf.~\cite{FeiSim00}) as well as potential functions $\widehat g(\cdot)$ of any growth (more precisely, without imposing the Sobolev 
(sub)critical growth, cf.~\cite{RyHo}). In particular, we shall address ourselves only to \emph{bounded} 
solutions of \eqref{eq:fCH}--\eqref{eq:bc} when $\widehat g(\cdot)$ may not satisfy any growth condition. Therefore we shall also discuss (eventual) 
boundedness of solutions by observing a smoothing effect of solutions to \eqref{eq:fCH}--\eqref{eq:bc} for $s$, $\sigma$ belonging to a proper range. 
In the case of the (classical) Cahn-Hilliard equation, the problem of (eventual) boundedness of solutions has
been studied by several authors, starting from the pioneering work~\cite{Caf} dealing with the (more involved) case
where the equation is settled on the whole space $\R^N$. On the other hand, when one works on 
a smooth bounded domain $\Omega$, at least for $g$ smooth enough it is not difficult to see
that the answer to the boundedness question is generally positive, at least under the natural 
boundary conditions of (homogeneous) Dirichlet or Neumann type. Actually, in that case one can perform 
standard bootstrap arguments in view of the fact that (classical) Laplace operators can actually
be iterated. 

The organization of the present paper is as follows: Section \ref{sec:prelim} is a collection of 
basic notions on function spaces, functionals and operators 
as well as preliminary facts. In Section \ref{sec:main}, we state main results of the present papers. They consist of global existence of solutions and energy 
inequalities (see Theorem \ref{thm:exi} in \S \ref{sec:main}), smoothing effect and boundedness of global solutions (Theorems \ref{thm:reg}--\ref{thm:ener} 
and Proposition \ref{P:hoelder}), construction of non-empty $\omega$-limit sets (Lemma \ref{L:omega}), convergence of solutions (Theorem \ref{thm:Loj}) and 
the fractional \L ojasiewicz-Simon inequality (Theorem \ref{T:LSI-bdd}). In Section \ref{sec:para}, we shall briefly prove 
Theorems \ref{thm:exi}--\ref{thm:ener} and Proposition \ref{P:hoelder}. Section \ref{sec:omega} is devoted to a proof of 
Lemma \ref{L:omega}. In Section \ref{sec:Loj-2}, a variant of the \L ojasiewicz-Simon inequality will be established for 
the fractional Dirichlet Laplacian $\Dsig$, and then, Theorem \ref{thm:Loj} will be proved in Section \ref{S:pr-conv}. 
Appendix compensates several technical arguments and lemmas used in the main part of the paper.


\section{Preliminaries}
\label{sec:prelim}

In this section, we set up notation and recall some preliminary facts on fractional Laplace operators.

\subsection{Notation and function spaces}
Let $H$ be a Hilbert space identified with its dual space $H'$. For $M \subset H$, set
\begin{align*}
M^\bot &:= \left\{ f \in H \colon (f,u)_H = 0 \ \mbox{
for all } u \in M \right\},
\end{align*}
where $(\cdot,\cdot)_H$ is the inner product of $H$. For a Banach space $X$ and its dual space $X'$, we denote by $\langle \cdot, \cdot \rangle_X$ (or simply $\langle \cdot, \cdot \rangle$) a duality pairing between $X$ and $X'$.

Let $u = u(x,t): \Omega \times [0,\infty) \to \mathbb R$ be a function of space and time variables. Throughout the paper, for each $t \geq 0$ fixed, we simply denote by $u(t)$ the function $u(\cdot,t) : \Omega \to \mathbb R$ of space variable only.

Let $0 \le S < T \le + \infty$, $p\in [1,\infty)$ and let $X$ be a Banach space. When we simply write $f\in L^p_{\loc}(S,T;X)$, it actually means that $f\in L^p(S,T;X)$ if $T< \infty$ and $f\in L^p_{\loc}([S,\infty);X)$ if $T=\infty$. Furthermore, $C_w([S,T];X)$ denotes the space of continuous functions $t \in [S,T] \mapsto u(t) \in X$ in the weak topology of $X$.

For simplicity, the restriction $f|_\Omega$ of $f : \RN \to \R$ onto $\Omega$ is also simply denoted by $f$, if no confusion may arise. Moreover, for $p \in [1,\infty)$, the space
$$
L^p_0(\RN) := \left\{ u \in L^p(\RN) \colon u = 0 \ \mbox{ a.e.~in } \RN \setminus \Omega \right\}
$$
is identified with $L^p(\Omega)$, and we will use the same notation for functions in $L^p(\Omega)$ and $L^p_0(\RN)$.

Here and henceforth, 
$C$ denotes a constant independent of the elements of the corresponding space or set,
whose explicit value may vary from line to line. Let $\|\cdot\|_1$ and $\|\cdot\|_2$ be norms
on a vector space $X$.
We write $\|u\|_1 \lesssim \|u\|_2$ in the following sense: there is a constant $C \geq 0$ independent of $u$ such that
$$
\|u\|_1 \leq C \|u\|_2 \quad \mbox{ for all } \ u \in X.
$$
We also write $\|u_n\|_1 \lesssim \|u_n\|_2$ (for all $n \in \N$) in an analogous sense (with a constant $C$ independent of $n$).


\subsection{Fractional Sobolev and H\"older spaces}

For any measurable set $\mathcal O \subset \RN$, $s \in (0,1)$ and $p \in [1,\infty)$, we recall the \emph{fractional Sobolev spaces} defined by
$$
W^{s,p}(\mathcal O) := \{v \in L^p(\mathcal O) \colon (x,y) \mapsto |v(x)-v(y)|^p/|x-y|^{N+sp} \in L^1(\mathcal O \times \mathcal O)\}.
$$
Moreover, we write $H^s(\RN) := W^{s,2}(\RN)$. Let $[\,\cdot\,]_{W^{s,p}(\mathcal O)}$ be the \emph{Gagliardo seminorm} of $W^{s,p}(\mathcal O)$ given by
$$
[v]_{W^{s,p}(\mathcal O)} := \left(\iint_{\mathcal O \times \mathcal O} \dfrac{|v(x)-v(y)|^p}{|x-y|^{N+sp}} \, \dx \, \dy\right)^{1/p} \quad \mbox{ for } \ v \in W^{s,p}(\mathcal O).
$$
Then $W^{s,p}(\mathcal O)$ is furnished with the norm,
$$
\|\cdot\|_{W^{s,p}(\mathcal O)} := \|\cdot\|_{L^p(\mathcal O)} + [\,\cdot\,]_{W^{s,p}(\mathcal O)}.
$$

Furthermore, $C^\sigma(\mathcal O)$ stands for the space of  H\"older continuous functions with exponent $\sigma \in (0,1)$. In particular, if $\mathcal O$ is compact, then the norm $\|\cdot\|_{C^\sigma(\mathcal O)}$ is defined by
\begin{equation}\label{hoelder-norm}
 \|w\|_{C^\sigma(\mathcal O)} := \sup_{x \in\mathcal O} |w(x)| + \sup_{\substack{x,y \in \mathcal O\\x\neq y}} \dfrac{|w(x)-w(y)|}{|x-y|^\sigma} < \infty.
\end{equation}

\subsection{Basic function space setting} Set
\begin{equation}\label{defiH}
   \LO:= L^2_0(\RN) = \big\{ v \in L^2(\RN): v=0~\text{a.e.~in $\RR^N\setminus \Omega$} \big\}.  
\end{equation}
Then $\LO$ is a closed subspace of $\LR$ and is endowed with its standard scalar product,
$$
(u,v) := \int_{\RN} u(x)v(x) \, \dx \quad \mbox{ for } \ u,v \in \LR,
$$
which also induces the norm of $\LO$, i.e., $\|\cdot\|_{\LO}^2 = \|\cdot\|_{\LR}^2 = (\cdot,\cdot)_{\LR}$. Hence $\LO$ is a Hilbert space. Moreover, $L^2(\Omega)$ can be identified with $\LO$ by zero extension outside $\Omega$. Here and henceforth, we simply write $L^2(\Omega)$ instead of $H_0$.

For $s \in (0,1)$, let us recall $\Xsz$ defined by \eqref{Xsz} and furnish $\Xsz$ with the scalar product,
\begin{equation}\label{eq:proscX0}
 (v,z)_{\Xsz} := (v, z) + \frac{C_s}{2} \iint_{\RdN} \dfrac{\left(v(x)-v(y)\right) \left(z(x)-z(y)\right)}{|x-y|^{N+2s}}  \, \dx \, \dy
\end{equation}
for $v,z \in \Xsz$, and with the corresponding norm
\begin{equation}\label{eq:normX0}
 \| v\|^2 := \|v\|^2_{L^2(\Omega)} 
   + \frac{C_s}{2} \iint_{\RdN} \dfrac{|v(x)-v(y)|^2}{|x-y|^{N+2s}} \, \dx \, \dy \quad \mbox{ for } \ v \in \Xsz. 
\end{equation}
Then $\Xsz$ is a Hilbert space (i.e., the above norm is complete) and $\Xsz$ could be also presented in a more familiar form, namely
\begin{equation}\label{eq:defX0_bis}
  \Xsz = \big\{ v\in H^s(\RN) \hbox{ such that } 
     v=0 \hbox{ a.e.~in } \RN\setminus \Omega \big\}.
\end{equation}
Due to the Poincar\'e-type inequality (see Proposition \ref{apdx:P:poincare} in Appendix with $p = 2$),
\begin{equation}\label{eq:poincare}
  \| v\|^2_{L^2(\Omega)}\le c_P \iint_{\RdN} \dfrac{|v(x)-v(y)|^2}{|x-y|^{N+2s}} \, \dx \, \dy \quad \mbox{ for all } \ v\in \Xsz
\end{equation}
for some constant $c_{P}$ depending only on $s$, $N$ and the diameter
of $\Omega$, the norm $\|\cdot\|$ given above is equivalent to
\begin{equation}\label{eq:normX0bis}
  \| v \|^2_{\Xsz} := 
    \frac{C_s}{2} \iint_{\RdN} \dfrac{|v(x)-v(y)|^2}{|x-y|^{N+2s}} \, \dx \, \dy\quad \mbox{ for } \ v \in \Xsz. 
\end{equation}
Therefore, from now on, we will fix \eqref{eq:normX0bis} to be the norm of $\Xsz$. Since $\Xsz$ can be identified with a {\it dense}\/ subspace of $L^2(\Omega)$, one may consider the Hilbert triple,
\begin{equation}\label{H-tr}
\Xsz \hookrightarrow L^2(\Omega) \simeq L^2(\Omega)' \hookrightarrow \Xsz',
\end{equation}
with compact and densely defined canonical injections. This relation will be frequently used throughout the paper. To see this, we recall the following compact embeddings, which might be more or less straightforward; however, a proof will be given below for the convenience of the reader.

\begin{propo}\label{P:embed}
 Suppose that $\Omega$ is a bounded domain of $\mathbb R^N$ with smooth
 boundary.
 For $\sigma > 0$, the space $\Xsigz$ is compactly embedded in
 $L^2(\Omega)$. Moreover, $L^2(\Omega)' (\simeq L^2(\Omega))$ is also compactly embedded in $\Xsigz'$.
\end{propo}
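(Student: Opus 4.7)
The plan is to deduce the first compactness from the standard fractional Rellich--Kondrachov theorem on bounded smooth domains, and then obtain the second assertion by duality via Schauder's theorem on compactness of adjoint operators.

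For the first embedding, the natural route is to show that $\Xsigz$ embeds continuously into $W^{\sigma,2}(\Omega)$. Given $v \in \Xsigz$, since $v$ vanishes a.e.\ outside $\Omega$, one has $\|v\|_{L^2(\Omega)} = \|v\|_{L^2(\RN)}$, and restricting the domain of integration in the Gagliardo double integral from $\RdN$ down to $\Omega\times\Omega$ only decreases its value, whence
\[
[v]_{W^{\sigma,2}(\Omega)}^2 \;\le\; \iint_{\RdN} \dfrac{|v(x)-v(y)|^2}{|x-y|^{N+2\sigma}}\,\dx\,\dy \;\lesssim\; \|v\|_{\Xsigz}^2.
\]
Combined with the Poincar\'e-type inequality \eqref{eq:poincare} this yields $\|v\|_{W^{\sigma,2}(\Omega)} \lesssim \|v\|_{\Xsigz}$, i.e.\ a continuous embedding $\Xsigz \hookrightarrow W^{\sigma,2}(\Omega)$. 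Since $\Omega$ is bounded with smooth boundary, the fractional Rellich--Kondrachov theorem (see, e.g., Theorem~7.1 in~\cite{Dine_Pala_Vald}) gives the compact embedding $W^{\sigma,2}(\Omega) \hookrightarrow L^2(\Omega)$, and composing this with the continuous embedding above yields the first assertion.

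For the second claim, let $\iota : \Xsigz \to L^2(\Omega)$ denote the compact injection just established. By Schauder's theorem its adjoint $\iota^\ast : L^2(\Omega)' \to \Xsigz'$ is also compact. A direct computation, using for $f \in L^2(\Omega)'$ and $v \in \Xsigz$ the identity
\[
\langle \iota^\ast f, v\rangle_{\Xsigz} \;=\; \langle f, \iota v\rangle_{L^2(\Omega)} \;=\; \int_\Omega f(x) v(x)\,\dx,
\]
identifies $\iota^\ast$ precisely with the canonical inclusion $L^2(\Omega)' \hookrightarrow \Xsigz'$ coming from the Hilbert triple \eqref{H-tr}. Hence this inclusion is compact.

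The only minor subtlety lies in making the identification between $\iota^\ast$ and the canonical injection through $L^2(\Omega)' \simeq L^2(\Omega)$; once this is clarified, everything reduces to recalling the standard fractional Rellich--Kondrachov theorem on bounded smooth domains, so I do not anticipate any serious obstacle.
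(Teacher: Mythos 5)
Your proof is correct and follows essentially the same route as the paper's: both establish the continuous embedding $\Xsigz \hookrightarrow W^{\sigma,2}(\Omega)$ via the Poincar\'e-type inequality and the fact that restricting the Gagliardo integral to $\Omega\times\Omega$ only decreases it, then invoke the compact embedding $W^{\sigma,2}(\Omega)\hookrightarrow L^2(\Omega)$ and Schauder's theorem for the dual statement. Your write-up merely spells out two steps the paper leaves implicit (the domain-restriction inequality and the identification of $\iota^\ast$ with the canonical injection), which is fine.
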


\begin{proof}
 Let $(u_n)$ be a bounded sequence in $\Xsigz$. Then it is also bounded in $W^{\sigma,2}(\Omega)$ from the definition of the norm $\|\cdot\|_{\Xsigz}$ and 
 the Poincar\'e-type inequality \eqref{eq:poincare}. Since $W^{\sigma,2}(\Omega)$ is compactly embedded in $L^2(\Omega)$, $(u_n|_{\Omega})$ is precompact 
 in $L^2(\Omega)$, and hence, $\Xsigz$ is compactly embedded in $L^2(\Omega)$. By Schauder's theorem for compact operators and their adjoint operators, $L^2(\Omega)'$ is also compactly embedded in $\Xsigz'$. 
\end{proof}


\subsection{A variational formulation of fractional Dirichlet Laplacians}

For $s \in (0,1)$, let us define a linear and bounded operator $\As : \Xsz \to \Xsz'$ 
by setting
$$
\left\langle \As u, v \right\rangle_{\Xsz} = \dfrac{C_s}2 \iint_{\RdN} \dfrac{\left(u(x)-u(y)\right)\left(v(x)-v(y)\right)}{|x-y|^{N+2s}} \, \dx \, \dy
$$
for $u,v \in \Xsz$. The right-hand side is a weak form of the fractional Laplacian $\Ds$ (see~\cite{Dine_Pala_Vald}); hence, $\As$ can be regarded as a weak representation of $\Ds$ (see also~\cite{ASSe1}). Here we also remark that $\As$ can be regarded as the Fr\'echet derivative of the convex functional,
$$
Q_s(u) := \dfrac{C_s}4 \iint_{\RdN} \dfrac{|u(x)-u(y)|^2}{|x-y|^{N+2s}} \, \dx \, \dy \quad \mbox{ for } \ u \in \Xsz,
$$
that is, $\As = Q_s'$. Then one can readily find that:
\begin{propo}\label{P:iso}
 $\As$ is an isomorphism from $\Xsz$ to $\Xsz'$.
\end{propo}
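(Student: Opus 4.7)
The plan is to obtain the statement directly from the Riesz representation theorem, exploiting the fact that the bilinear form associated with $\As$ coincides with the scalar product that induces the $\Xsz$-norm fixed in \eqref{eq:normX0bis}.

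First I would check that the map $a(u,v):=\langle \As u,v\rangle_{\Xsz}$ is a continuous bilinear form on $\Xsz\times \Xsz$. Symmetry is evident from the defining integral, while Cauchy--Schwarz applied inside the double integral yields
$$
|a(u,v)|\le \|u\|_{\Xsz}\,\|v\|_{\Xsz}\qquad\text{for every } u,v\in \Xsz,
$$
which shows that $\As$ is a bounded linear operator from $\Xsz$ into $\Xsz'$ with operator norm at most one.

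Second, by the very choice of the norm \eqref{eq:normX0bis}, one has $a(u,u)=\|u\|_{\Xsz}^{2}$; in other words, $a$ is exactly the inner product that makes $\Xsz$ a Hilbert space. Consequently, the Riesz representation theorem ensures that, for every $f\in \Xsz'$, there exists a unique $u\in \Xsz$ such that $a(u,v)=\langle f,v\rangle$ for all $v\in\Xsz$, i.e.\ $\As u=f$. Hence $\As$ is a bijection onto $\Xsz'$. Moreover, testing with $v=u$ and applying the duality estimate gives
$$
\|u\|_{\Xsz}^{2}=\langle \As u,u\rangle\le \|\As u\|_{\Xsz'}\,\|u\|_{\Xsz},
$$
so that $\|u\|_{\Xsz}\le \|\As u\|_{\Xsz'}$. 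Combined with the continuity bound above, this shows that $\As$ is in fact an isometric isomorphism (actually, the Riesz map of $\Xsz$).

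There is essentially no obstacle here: the work has been front-loaded into the choice of norm \eqref{eq:normX0bis}, which relies on the Poincaré-type inequality \eqref{eq:poincare} to guarantee that the Gagliardo seminorm alone is already an equivalent norm on $\Xsz$. Once that is in place, Proposition \ref{P:iso} is just the observation that $\As$ represents the inner product of this Hilbert space.
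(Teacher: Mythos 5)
Your proof is correct and rests on the same key observation as the paper's, namely that $\left\langle \As u,u\right\rangle_{\Xsz}=\|u\|_{\Xsz}^2$ for the norm \eqref{eq:normX0bis}, so that $\As$ is precisely the Riesz map of the Hilbert space $\Xsz$. The paper packages this by noting $\As=Q_s'$ with $Q_s=\frac12\|\cdot\|_{\Xsz}^2$ and citing the duality-mapping result in Barbu, which for a Hilbert space is exactly the Riesz isomorphism you invoke directly; your version is simply the self-contained form of the same argument.
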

\begin{proof}
Note that $Q_s(u) = (1/2)\|u\|_{\Xsz}^2$. By~\cite[Example 2, p.\,53]{B}, $\As = Q_s'$ turns out to be a duality mapping between $\Xsz$ and $\Xsz'$. 
Hence in particular, $\As$ is an isomorphism from $\Xsz$ to $\Xsz'$.
\end{proof}


\subsection{Analytic operator}\label{Ss:analytic}

 We next recall the notion of (\emph{real}) \emph{analyticity} of an operator $T : X \to Y$ defined on Banach spaces $X,Y$ (see, e.g., Definition 8.8 of~\cite{ZeidlerI}).
\begin{defi}
Let $X, Y$ be Banach spaces and let $T : X \to Y$ be an operator.
Then $T : X \to Y$ is said to be \emph{analytic} at $z \in X$ if there exist $r > 0$ and
for each $n \in \mathbb N$ also exists a symmetric bounded $n$-linear operator $T_n(z) : X^n \to Y$  such that
\begin{align}\label{analy-1}
T(z+h) - T(z)
&= \sum_{n=1}^\infty [T_n(z)](\underbrace{h,\ldots,h}_{\text{$n$ times}}) \ \mbox{ in } Y
\end{align}
for any $h \in X$ satisfying $\|h\|_X < r$, and
\begin{equation}\label{analy-2}
 \sup_{n \in \N} \|T_n(z)\|_{\mathcal L^n(X,Y)} r^n < \infty.
\end{equation}
Let $U$ be an open set in $X$. If $T$ is analytic at each $z \in U$, then $T$ is said to be analytic in $U$. 
\end{defi}
\begin{remark}
 {\rm
 Under \eqref{analy-2}, one can check that
 $$
 \sum_{n = 1}^\infty \|T_n(z)\|_{\mathcal L^n(X,Y)} \|h\|_X^n < \infty
 \quad \mbox{ if } \ h \in X, \quad \|h\|_X < r.
 $$
 }
\end{remark}


\subsection{\L ojasiewicz inequality}

Let us finally recall a classical \L ojasiewicz gradient inequality for analytic functions defined on finite dimensional spaces.
\begin{propo}[{\L}ojasiewicz~\cite{L,L2}]\label{P:L}
 Let $x_0 \in \RN$ and let $f$ be a real analytic function defined on a neighbourhood $U$ of $x_0$. Then there exist constants $\theta \in (0,1/2]$ and $C,\delta > 0$ such that
 \begin{equation}\label{Lo}
  |f(x_0)-f(x)|^{1-\theta} \leq C|\nabla f(x)|
 \end{equation}
 for all $x \in U$ satisfying $|x-x_0| < \delta$.
\end{propo}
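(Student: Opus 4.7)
The plan is to reduce to the case of a critical point $x_0$ and then exploit the structure of analytic functions near their zero set via resolution of singularities (or, equivalently, via the curve selection lemma for semi-analytic sets). After the harmless normalization $f(x_0) = 0$, if $\nabla f(x_0) \neq 0$ then $|\nabla f(x)| \geq c > 0$ on a small ball around $x_0$, while $|f(x)| \lesssim |x-x_0|$ by the mean value theorem, so \eqref{Lo} holds trivially with $\theta = 1/2$. Hence the only non-trivial case is $\nabla f(x_0) = 0$, in which \eqref{Lo} quantifies how slowly $|\nabla f|$ can vanish compared to $|f|$ near a critical zero.

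For this critical case I would invoke Hironaka's theorem on resolution of singularities applied to $f$ on a neighbourhood of $x_0$: there exists a proper real analytic map $\pi: M \to U$ from a real analytic manifold $M$ such that, in suitable local coordinates $(y_1, \ldots, y_N)$ on each chart of $M$,
\[
(f\circ \pi)(y) = u(y) \, y_1^{a_1}\cdots y_N^{a_N}
\]
holds with $u$ a nowhere-vanishing analytic function and $(a_1, \ldots, a_N) \in \N^N$. In this monomial normal-crossings form, a direct computation (applying the product rule to the monomial times the analytic unit $u$) yields
\[
|\nabla_y(f\circ\pi)(y)| \gtrsim |(f\circ\pi)(y)|^{1-\theta}
\]
locally, with exponent $\theta \in (0,1/2]$ depending only on $\max_i a_i$ in that chart.

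The final step is to descend this inequality from $M$ back to $U$ via the chain rule
\[
|\nabla_y (f\circ\pi)(y)| \leq \|D\pi(y)\|\,|\nabla f(\pi(y))|,
\]
using the properness of $\pi$ to cover $\pi^{-1}(\overline{B_\delta(x_0)})$ by finitely many normal-crossings charts, thus bounding $\|D\pi\|$ uniformly and combining the finitely many chartwise estimates. Taking $\theta$ to be the minimum of the exponents arising from these charts then produces \eqref{Lo} on a possibly smaller ball $B_\delta(x_0) \cap U$.

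The main obstacle, as expected, is the invocation of Hironaka's theorem, a deep result from analytic geometry. A close-to-original alternative by {\L}ojasiewicz~\cite{L,L2} avoids resolution and argues by contradiction using the curve selection lemma: if \eqref{Lo} failed for every $\theta$, one would extract a real analytic arc $\gamma:[0,\eps)\to U$ with $\gamma(0)=x_0$ along which both $f\circ\gamma$ and $|\nabla f|\circ\gamma$ admit Puiseux expansions in the arc parameter, and a comparison of their leading orders yields the contradiction. Either route, however, requires nontrivial semi-analytic geometry, so in the present paper I would simply cite Proposition~\ref{P:L} as classical and focus the technical work on its \L ojasiewicz--Simon counterpart in Theorem~\ref{T:LSI-bdd}.
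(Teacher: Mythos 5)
The paper gives no proof of Proposition~\ref{P:L}: it is stated as the classical \L ojasiewicz gradient inequality and simply cited from~\cite{L,L2}, which is exactly the course of action you settle on in your final paragraph. Your sketch of the underlying argument --- reduction to the critical case, monomialization via Hironaka resolution, the chartwise estimate $|\nabla_y(f\circ\pi)|\gtrsim |f\circ\pi|^{1-\theta}$, and descent through the chain rule and properness (or the curve-selection alternative) --- is a correct outline of the standard proof, so the proposal is consistent with the paper's treatment.
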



\section{Main results}
\label{sec:main}

This section is devoted to stating the main results of the present paper. We assume $g$ to satisfy at least the following basic property:
\begin{equation}\label{hp:g1}
  g \in C^1(\RR), \quad g(0) = 0.
\end{equation}
Letting $\gciapo\in C^2(\RR)$ denote a primitive function of $g$, we define the {\it energy functional}\/ $\J$ as follows:
\begin{equation}\label{defiJ}
  \J(v) := \dfrac{C_\sigma}4 \iint_{\mathbb R^{2N}}
   \dfrac{|v(x)-v(y)|^2}{|x-y|^{N+2\sigma}} \,\dix\,\diy
   + \int_{\Omega} \gciapo(v(x)) \,\dix
\end{equation}
for $v$ satisfying
\begin{equation}\label{gv-}
  v\in \Xsigz, \quad
   \gciapo(v)^- \in L^1(\Omega),
\end{equation}
where $(\cdot)^-$ denotes the negative part function.

Throughout this paper, we are concerned with solutions to the fractional Cahn-Hilliard system \eqref{eq:fCH}--\eqref{eq:bc} defined by

\begin{defi}[Weak solutions]\label{def:weak}
 Let $0\le S < T \le \infty$. Let $g$ satisfy\/ \eqref{hp:g1}.
 We say that $(u,w)$ is a\/ {\rm weak solution} to the fractional Cahn-Hilliard
 system \eqref{eq:fCH}--\eqref{eq:bc} on the interval $(S,T)$ if 
 \begin{align}\label{reg:u}
   & u \in C_w([S,T);\Xsigz), \quad
    u_t \in L^2(S,T;\Xsz'),\\ 
  \label{reg:w}
  & w \in L^2(S,T;\Xsz),\\
  & g(u) \in L^2_{\loc}(S,T;L^2(\Omega)), \label{reg:gu}
 \end{align}
 and the following equations hold a.e.~in $(S,T)$\/{\rm :}
 \begin{align}
  \langle u_t(t), z \rangle_{\Xsz} + \dfrac{C_s}2 \iint_{\RdN} \dfrac{\left(w(x,t)-w(y,t)\right)\left(z(x)-z(y)\right)}{|x-y|^{N+2s}} \,\dx\,\dy = 0 \nonumber\\
  \quad \mbox{ for all } \ z \in \Xsz \ \mbox{ and a.e. } \ t \in (S,T) \label{eq:w0}
 \end{align}
 and
  \begin{align}
   \int_\Omega w(x,t) \zeta(x) \, \dx = \dfrac{C_\sigma}2 \iint_{\RdN} \dfrac{\left(u(x,t)-u(y,t)\right)\left(\zeta(x)-\zeta(y)\right)}{|x-y|^{N+2\sigma}} \,\dx\,\dy \nonumber\\
+\int_\Omega g(u(x,t))\zeta(x) \, \d x  \quad \mbox{ for all } \ \zeta \in \Xsigz \ \mbox{ and a.e. } \ t \in (S,T).\label{eq:u0}
 \end{align}
\end{defi}
Prior to exhibiting basic assumptions, we give the following
 \begin{remark}\label{rem:reg}{\rm
  \begin{enumerate}
   \item[(i)] Weak forms \eqref{eq:w0} and \eqref{eq:u0} can be also equivalently rewritten as
\begin{align}\label{eq:w}
   & u_t + \As w = 0 \ \mbox{ in } \Xsz',\\
  \label{eq:u}
   & w = \Asig u + g(u) \ \mbox{ in } \Xsigz'.
\end{align}
   \item[(ii)] By \eqref{hp:g1} and \eqref{reg:u},
 $g(u)$ is measurable and vanishes identically outside $\Omega$. Moreover,
 the regularity
 \begin{equation}\label{reg:gu0}
   g(u) \in L^2_{\loc}(S,T;\Xsigz'),
 \end{equation}
 is implicitly hidden in equation \eqref{eq:u}. Actually, if \eqref{eq:u} holds, then \eqref{reg:gu0} follows from
 a comparison of terms thanks to \eqref{reg:u}-\eqref{reg:w}.
 However, \eqref{reg:gu} does not directly follow from the definition of weak solutions mentioned above. 
 It is worth observing that, in our existence theorem we shall
 need stronger assumptions on $g$ (see below), and correspondingly,
 we shall get better regularity for $g(u)$.
  \end{enumerate}
 }
\end{remark}
%
%
%

\noindent%

In order to ensure existence of weak solutions, we need to assume, beyond \eqref{hp:g1}, a couple of additional conditions, which will be referred to as {\sl $\lambda$-monotonicity}, and {\sl dissipativity}, respectively:
\begin{align}\label{hp:g3}
  & \text{There exists } \ \lambda \ge 0 
    \ \text{ such that } \ g'(r) \ge -\lambda 
    \ \text{ for all } r \in \RR,\\
 \label{hp:g4}
  & \liminf_{|r|\to \infty}
   \big( g(r)r + (\lambda_1 - \kappa) r^2 \big) > 0
   \ \text{ for some }\ \kappa>0,
\end{align}
where $\lambda_1=\lambda_1(\sigma)>0$ is the first eigenvalue of $\Dsig$ (see~\cite{serva-valdi11}). Namely,
$$
\lambda_1 = \inf_{v \neq 0} \dfrac{\|v\|_{\Xsigz}^2}{\|v\|_{L^2(\Omega)}^2} > 0.
$$
Note that, if \eqref{hp:g3} holds, then,
setting $\beta(r):= g(r) + \lambda r$ for $r\in \RR$, we find by \eqref{hp:g1} that $\beta$ is of class $C^1$ and monotone and that $\beta(0) = 0$. Moreover, \eqref{eq:u} can be 
equivalently rewritten as 
\begin{equation}\label{eq:ubeta}
   w = \Asig u + \beta(u) - \lambda u \ \mbox{ in } \Xsigz'.
\end{equation}
Observe also that, if \eqref{hp:g4} holds, then we can easily prove that
\begin{equation}\label{g4-11}
  \io \gciapo(u) \, \dx \ge -\frac{\lambda_1-\kappa}2 \| u \|_{L^2(\Omega)}^2 - C,
\end{equation}
for some $C\ge 0$, whence (cf., e.g., \cite{ASSe1})
the energy satisfies the basic coercivity property,
\begin{equation}\label{g4-12}
  \J(v) \ge \kappa_0 \| v \|_{\Xsigz}^2 - C 
   \quext{ for all }\ v \in \Xsigz,
\end{equation}
%
where $\kappa_0:=\kappa/(2\lambda_1)>0$.
%
%

Let us also specify some natural assumptions on the initial datum:
\begin{equation}\label{hp:u0}
  u_0 \in \Xsigz, \quad \betaciapo(u_0)\in L^1(\Omega),
\end{equation}
where $\betaciapo$ stands for a primitive function of $\beta$ (i.e., $\betaciapo' = \beta$).
We remark that \eqref{gv-} follows immediately from \eqref{hp:u0}, and moreover, \eqref{hp:u0}
exactly corresponds to the finiteness of the initial energy, namely we have $\J(u_0)<\infty$.

The first result of this paper concerns existence of global weak solutions.
The proof will be only sketched since it essentially consists of a small modification of the 
argument given in~\cite{ASSe1}.
\begin{theorem}[Existence and uniqueness of weak solutions]\label{thm:exi}
 Let us assume\/ \eqref{hp:g1}, \eqref{hp:g3} and\/ \eqref{hp:g4},
 and let $u_0$ satisfy\/ \eqref{hp:u0}. Then, there exists one and only 
 one weak solution $(u,w)$ of \eqref{eq:fCH}--\eqref{eq:bc} in the sense of Definition \ref{def:weak} defined over $(0,\infty)$. Moreover, the function $t \mapsto \J(u(t))$ is non-increasing and right-continuous in $[0,\infty)$ and differentiable a.e.~in $(0,\infty)$. The function $t \mapsto u(t)$ is also right-continuous on $[0,\infty)$ in the strong topology of $\Xsigz$. Furthermore, the following\/ {\rm energy inequalities} hold\/{\rm :}
 \begin{align}\label{ei1}
   & \|w(t)\|_{\Xsz}^2 + \ddt \J(u(t)) \le 0
    \quad \mbox{ for a.a. } t \in (0,\infty),\\
  \label{ei2}
   & \langle u_t(t), \Asig u(t) + g(u(t)) \rangle_{\Xsz} 
    \ge \ddt \J(u(t)) \quad \mbox{ for a.a. } t \in (0,\infty),\\
  \label{ei-i}  
   & \int^t_\tau \|w(r)\|_{\Xsz}^2 \,\dir
    + \J(u(t)) \le \J(u(\tau))
    \quad \mbox{ for all } \ 0 \leq \tau \leq t.
 \end{align}
 Finally, for any $T > 0$, there exists a constant $C_T > 0$ such that
 \begin{equation}
  \sup_{t \geq 0} \int^{t+T}_t \|\beta(u(\tau))\|_{L^2(\Omega)}^2 \, \d \tau \leq C_T.\label{reg:beta}
 \end{equation}
\end{theorem}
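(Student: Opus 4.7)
My plan is to adapt the Faedo--Galerkin strategy developed in \cite{ASSe1}, since the authors themselves point out that the essential novelty with respect to that reference is the additional bound \eqref{reg:beta} on $\beta(u)$. First I would fix the eigenbasis $\{e_j\}$ of $\Asig$ in $\Xsigz$ (so that $\Asig e_j = \lambda_j e_j$ with $\lambda_j \uparrow +\infty$) and look for finite-dimensional approximations $u_n, w_n \in \mathrm{span}(e_1,\ldots,e_n)$ solving the system obtained by projecting \eqref{eq:w}--\eqref{eq:u} onto that span. To handle the fact that $g$ need not be Lipschitz nor of polynomial growth, I would first replace $\beta = g + \lambda\,\Id$ by its Yosida approximation $\beta_\eps$, which is globally Lipschitz and still monotone, producing an approximate energy $\J_\eps$ that satisfies the coercivity \eqref{g4-12} uniformly in $\eps$; local existence for the resulting ODE system then follows from standard arguments.

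Testing the projection of \eqref{eq:w} with $w_n$ and invoking \eqref{eq:u} to rewrite $\langle u_{n,t}, w_n\rangle$ as $\ddt \J_\eps(u_n)$, I would obtain the discrete energy identity, yielding uniform bounds on $u_n$ in $L^\infty(0,T;\Xsigz)$ and on $w_n$ in $L^2(0,T;\Xsz)$, hence on $u_{n,t} = -\As w_n$ in $L^2(0,T;\Xsz')$. The new bound \eqref{reg:beta} would be obtained by testing the approximate version of \eqref{eq:u} with $\beta_\eps(u_n)$: the contribution $\langle \Asig u_n, \beta_\eps(u_n)\rangle$ is nonnegative by monotonicity of $\beta_\eps$ (through the standard chain rule for Gagliardo seminorms of compositions with monotone Lipschitz functions), while the coercive $\|\beta_\eps(u_n)\|_{L^2}^2$ term emerges directly once one inserts $w_n = \Asig u_n + \beta_\eps(u_n) - \lambda u_n$; the remaining contributions are absorbed via Cauchy--Schwarz using the already established $L^2$ bound on $w_n$. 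Integrating over $(t, t+T)$ and exploiting the global dissipation $\int_0^\infty \|w_n\|_{\Xsz}^2 < \infty$ provided by the energy identity (together with the bound on $\|u(t)\|_{L^2}$) produces the claimed uniform-in-$t$ estimate. Aubin--Lions then gives strong convergence $u_n \to u$ in $L^2_{\loc}([0,\infty);L^2(\Omega))$, allowing the double passage to the limit $n \to \infty$, $\eps \to 0$ in all terms; the nonlinearity is handled via the equi-integrability coming from \eqref{reg:beta}.

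For uniqueness, given two solutions $u_1, u_2$ sharing the same initial datum, I would set $\bar u := u_1 - u_2$, $\bar w := w_1 - w_2$ and test $\bar u_t + \As \bar w = 0$ with $\As^{-1} \bar u \in \Xsz$, well-defined thanks to Proposition \ref{P:iso}. This produces
\begin{equation*}
 \tfrac{1}{2}\ddt \|\bar u\|_{\Xsz'}^2 + \int_\Omega \bar w\, \bar u \, \dx = 0.
\end{equation*}
Using \eqref{eq:ubeta} and the monotonicity of $\beta$, one has $\int_\Omega \bar w\,\bar u\,\dx \geq \|\bar u\|_{\Xsigz}^2 - \lambda\|\bar u\|_{L^2}^2$, and a fractional interpolation estimate of the form $\|\bar u\|_{L^2}^2 \lesssim \|\bar u\|_{\Xsigz}^{2\theta}\|\bar u\|_{\Xsz'}^{2(1-\theta)}$ allows absorbing the $L^2$ contribution and concluding by Gronwall that $\bar u \equiv 0$. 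This interpolation step, where the interplay between the two exponents $s$ and $\sigma$ must be handled with care, is the main technical obstacle of the argument.

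Finally, the energy inequalities \eqref{ei1} and \eqref{ei-i} follow by passing to the limit in the discrete energy identity through weak lower semicontinuity of $v \mapsto \|v\|_{\Xsigz}^2$ and of the convex part of $\J$; \eqref{ei2} is then an immediate consequence of \eqref{eq:w}. Right-continuity of $t \mapsto \J(u(t))$ and of $t \mapsto u(t)$ in $\Xsigz$ at any $t_0 \geq 0$ would be obtained by combining the already established weak continuity of $u$ with the monotonicity of $\J$ along the flow: for $t_n \downarrow t_0$, one has $u(t_n) \rightharpoonup u(t_0)$ in $\Xsigz$, while $\limsup \J(u(t_n)) \leq \J(u(t_0))$ from monotonicity and $\liminf \J(u(t_n)) \geq \J(u(t_0))$ from lower semicontinuity, forcing $\|u(t_n)\|_{\Xsigz} \to \|u(t_0)\|_{\Xsigz}$ and hence strong convergence by uniform convexity of $\Xsigz$. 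Almost-everywhere differentiability of the monotone function $t \mapsto \J(u(t))$ is automatic.
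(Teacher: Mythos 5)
Your overall strategy coincides with the paper's: regularize $\beta$ by its Yosida approximation $\beta_\vep$, run the a priori estimates of \cite{ASSe1}, obtain the new bound \eqref{reg:beta} by testing the $\sigma$-equation with $\beta_\vep(u)$ and using the pointwise monotonicity inequality $\langle \Asig v, \beta_\vep(v)\rangle_{\Xsigz}\ge 0$, and pass to the limit; uniqueness is the standard $\As^{-1}$-duality argument. (Be aware, though, that the paper delegates everything except \eqref{reg:gu} and \eqref{reg:beta} to \cite{ASSe1}, whose construction is an \emph{implicit time discretization}, not a Faedo--Galerkin scheme; substituting a spectral Galerkin scheme is in principle legitimate, but note that the eigenfunctions of $\Asig$ are not eigenfunctions of $\As$ on a bounded domain, so the projected system genuinely mixes the two operators and the $L^2$-orthogonal projection must be used consistently.)

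There is one step that fails as written: deriving the $L^2$ bound on $\beta_\vep(u_n)$ \emph{at the Galerkin level}. The function $\beta_\vep(u_n)$ does not belong to $\mathrm{span}(e_1,\dots,e_n)$, so it is not an admissible test function for the projected equation; replacing it by $P_n\beta_\vep(u_n)$ destroys both the sign of $\langle \Asig u_n, P_n\beta_\vep(u_n)\rangle_{\Xsigz}$ (the pointwise monotonicity inequality is not stable under $L^2$-projection) and the coercive term, since $\int_\Omega \beta_\vep(u_n)\,P_n\beta_\vep(u_n)\,\dx$ controls only $\|P_n\beta_\vep(u_n)\|_{L^2(\Omega)}^2$. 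The remedy --- and what the paper actually does --- is to perform this test only \emph{after} the limit $n\to\infty$, on the $\vep$-regularized PDE, where $u_\vep(t)\in\Xsigz$ and the Lipschitz continuity of $\beta_\vep$ make $\beta_\vep(u_\vep(t))\in\Xsigz$ admissible; the resulting bound is uniform in $\vep$ and survives the limit $\vep\to0$, and \eqref{reg:beta} for the limit solution is then recovered by testing \eqref{eq:u} with $\beta_\vep(u)$ and letting $\vep\to0$ as in Appendix \S\,\ref{apdx:s:beta-est}. Two further remarks: the interpolation inequality you single out as the main obstacle in the uniqueness proof is not needed, since $\Xsigz\hookrightarrow L^2(\Omega)$ compactly and $L^2(\Omega)\hookrightarrow \Xsz'$ continuously and injectively, so Ehrling's lemma already gives $\|\bar u\|_{L^2(\Omega)}^2\le \epsilon \|\bar u\|_{\Xsigz}^2 + C_\epsilon\|\bar u\|_{\Xsz'}^2$ for every $\epsilon>0$, which is all the Gronwall argument requires and involves no constraint relating $s$ and $\sigma$; and in the right-continuity argument the lower semicontinuity of $\J$ must be applied after splitting $\gciapo=\betaciapo-\tfrac{\lambda}{2}(\cdot)^2$, using the strong $L^2(\Omega)$ convergence coming from compactness, exactly as in the proof of Lemma \ref{L:omega}.
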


Under the same conditions on $g$, we also find out parabolic smoothing
properties of weak solutions:
\begin{theorem}[Smoothing effect]\label{thm:reg}
 Let the assumptions of\/ {\rm Theorem~\ref{thm:exi}} hold. Then, for any $t_0>0$,
 we additionally have 
\begin{align}
  & u_t \in L^2(t_0,\infty;\Xsigz) \cap L^\infty(t_0,\infty;\Xsz'),\label{reg:u2}\\ 
  \label{reg:w2}
   & w \in L^\infty(t_0,\infty;\Xsz).
 \end{align}
\end{theorem}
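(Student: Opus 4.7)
The plan is to derive a dissipative differential inequality for
\[
y(t):=\|w(t)\|_{\Xsz}^2=\|u_t(t)\|_{\Xsz'}^2,
\]
where the second equality rests on Proposition~\ref{P:iso} and the identity $u_t=-\As w$ in $\Xsz'$, which yields $\As^{-1}u_t=-w$ and hence $\|u_t\|_{\Xsz'}=\|\As^{-1}u_t\|_{\Xsz}=\|w\|_{\Xsz}$. From \eqref{ei-i} together with the coercivity \eqref{g4-12}, one already has $y\in L^1(0,\infty)$, so uniformly bounding $y$ on $[t_0,\infty)$ will follow from a uniform Gronwall-type lemma once a suitable dissipative bound is in force.

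To obtain such a bound, I formally differentiate \eqref{eq:w} in time and test against $w_t$ in the $\Xsz'$--$\Xsz$ duality (equivalently, test \eqref{eq:w} itself with $w_t$), getting $\langle u_t,w_t\rangle+\tfrac12\ddt\|w\|_{\Xsz}^2=0$. Differentiating \eqref{eq:u} yields $w_t=\Asig u_t+g'(u)u_t$, so that the $\lambda$-monotonicity \eqref{hp:g3} gives
\[
\langle u_t,w_t\rangle=\|u_t\|_{\Xsigz}^2+\int_\Omega g'(u)u_t^2\,\dx\ge\|u_t\|_{\Xsigz}^2-\lambda\|u_t\|_{L^2(\Omega)}^2.
\]
Since $\Xsigz\hookrightarrow L^2(\Omega)\hookrightarrow\Xsz'$ with the first embedding compact (Proposition~\ref{P:embed}), Ehrling's lemma allows to absorb $\lambda\|u_t\|_{L^2(\Omega)}^2$ into $\tfrac12\|u_t\|_{\Xsigz}^2+C\|u_t\|_{\Xsz'}^2=\tfrac12\|u_t\|_{\Xsigz}^2+C\,y$, producing
\[
\ddt y(t)+\|u_t\|_{\Xsigz}^2\le C\,y(t).
\]
The uniform Gronwall lemma applied on intervals $[t,t+1]$, combined with $\int_0^\infty y\,\dir<\infty$, then yields $y(t)\le C$ for every $t\ge t_0>0$; this is precisely $w\in L^\infty(t_0,\infty;\Xsz)$ and $u_t\in L^\infty(t_0,\infty;\Xsz')$. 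Integrating the same inequality over $[t,t+T]$ and inserting the just-obtained $L^\infty$ bound finally delivers $u_t\in L^2(t_0,\infty;\Xsigz)$.

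The main obstacle is that the above calculation is only formal: the regularity at hand ($u_t\in L^2(\Xsz')$, $w\in L^2(\Xsz)$) does not permit to differentiate \eqref{eq:w}--\eqref{eq:u} in time nor to test \eqref{eq:w} against $w_t$. I would therefore carry out the estimate on the Galerkin scheme used to establish Theorem~\ref{thm:exi} (inherited from~\cite{ASSe1}), where all time derivatives are classical, all duality manipulations are licit, and the bound is uniform in the discretization parameter, so that weak lower semicontinuity provides the claimed regularities in the limit. A slightly more analytic alternative is to replace $\partial_t$ with the difference quotient $\partial^h f(t):=(f(t+h)-f(t))/h$, exploiting the factorization $\partial^h(g\circ u)=G^h\,\partial^h u$ with $G^h(t):=\int_0^1 g'\bigl(u(t)+\tau(u(t+h)-u(t))\bigr)\,\d\tau\ge-\lambda$, which reproduces the $\lambda$-monotone estimate at the discrete level and lets one pass to the limit as $h\to 0^+$.
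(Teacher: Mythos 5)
Your argument is correct and essentially coincides with the paper's proof: the same formal identity obtained by testing \eqref{eq:w} with $w_t$ and the differentiated \eqref{eq:u} with $u_t$, the same use of the $\lambda$-monotonicity \eqref{hp:g3} and Ehrling's lemma, and the same exploitation of $\int_0^\infty\|w(r)\|_{\Xsz}^2\,\dir<\infty$ together with the isometry $\|u_t\|_{\Xsz'}=\|w\|_{\Xsz}$. The paper merely replaces your uniform Gronwall step by a mean-value selection of a time $t_1\in(0,t_0)$ with $\|w(t_1)\|_{\Xsz}^2\le Ct_0^{-1}$ followed by direct integration, and justifies the formal computation on the implicit time-discretization scheme inherited from \cite{ASSe1} (Appendix \ref{apdx:thm3}) rather than on a Galerkin approximation --- differences of detail only.
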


In some cases, we can also derive energy {\sl equalities}.
\begin{theorem}[Energy equalities]\label{thm:ener}
 Let the assumptions of\/ {\rm Theorem~\ref{thm:exi}} hold and
 let $(u,w)$ be a weak solution defined over the interval $(S,T)$ 
 with $0\le S < T \le \infty$ and additionally satisfying either
 \begin{equation}\label{reg:u3}
   u \in L^2_{\loc}(S,T; \Xsz )
 \end{equation}
 or
 \begin{equation}\label{reg:u4}
   u_t \in L^2_{\loc}(S,T;L^2(\Omega)).
 \end{equation}
 Then, relations\/ \eqref{ei1}-\eqref{ei-i} hold with inequalities replaced by the equal sign over the interval $(S,T)$.
\end{theorem}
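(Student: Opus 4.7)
The strategy is to upgrade the inequalities \eqref{ei1}--\eqref{ei-i} to equalities by establishing the pointwise chain rule
$$
\ddt \J(u(t)) = -\|w(t)\|_{\Xsz}^2 \quad \text{for a.a. } t \in (S,T)
$$
and then integrating in time. Testing \eqref{eq:w0} with $z=w(t)\in\Xsz$ gives $\langle u_t(t),w(t)\rangle_{\Xsz}=-\|w(t)\|_{\Xsz}^2$ a.e., so the task reduces to proving the chain rule $\ddt\J(u(t))=\langle u_t(t),w(t)\rangle_{\Xsz}$ under each extra assumption. A key preliminary observation used in both cases is that \eqref{reg:beta} gives $g(u)=\beta(u)-\lambda u\in L^2_{\loc}(S,T;L^2(\Omega))$, so combining with $w\in L^2(\Xsz)\subset L^2_{\loc}(S,T;L^2(\Omega))$ and \eqref{eq:u} one obtains the automatic additional regularity $\Asig u = w - g(u) \in L^2_{\loc}(S,T;L^2(\Omega))$.

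Under the assumption $u_t\in L^2_{\loc}(S,T;L^2(\Omega))$ of \eqref{reg:u4}, view $Q_\sigma$ as a proper, convex, lower semicontinuous functional on $L^2(\Omega)$ (set to $+\infty$ outside $\Xsigz$); its $L^2$-subdifferential at $u\in\Xsigz$ equals $\{\Asig u\}$ precisely when $\Asig u\in L^2(\Omega)$. Because $u\in H^1_{\loc}(S,T;L^2(\Omega))\cap L^\infty_{\loc}(S,T;\Xsigz)$ and $\Asig u\in L^2_{\loc}(S,T;L^2(\Omega))$, the Brezis chain rule for convex functions of monotone type yields $\ddt Q_\sigma(u(t))=(\Asig u(t),u_t(t))_{L^2(\Omega)}$ a.e. The potential term is treated by a standard difference-quotient argument using $\widehat g\in C^2$, $u\in C([a,b];L^2(\Omega))$ for every $[a,b]\subset(S,T)$, and $u_t, g(u)\in L^2_{\loc}(S,T;L^2(\Omega))$: this gives $\ddt\int_\Omega\widehat g(u(t))\,\dx=(g(u(t)),u_t(t))_{L^2(\Omega)}$. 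Summing and substituting \eqref{eq:u} yields $\ddt\J(u(t))=(w(t),u_t(t))_{L^2(\Omega)}$, which by compatibility of the Hilbert triple \eqref{H-tr} agrees with $\langle u_t,w\rangle_{\Xsz}$, closing this case.

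Under the assumption $u\in L^2_{\loc}(S,T;\Xsz)$ of \eqref{reg:u3}, $u_t$ is only known to lie in $L^2_{\loc}(S,T;\Xsz')$, ruling out a direct application of Brezis' framework. The plan is to regularize in time by a standard mollifier $\rho_\epsilon$: setting $u^\epsilon:=u*\rho_\epsilon$ and $w^\epsilon:=w*\rho_\epsilon$, linearity of $\As$ and $\Asig$ in the spatial variable yields the mollified equations
$$
(u^\epsilon)_t + \As w^\epsilon = 0 \text{ in } \Xsz', \qquad w^\epsilon = \Asig u^\epsilon + (g(u))^\epsilon \text{ in } \Xsigz'.
$$
Since $u^\epsilon$ is smooth in time with values in $\Xsz\cap\Xsigz$, the classical pointwise chain rules apply; testing the first mollified equation with $w^\epsilon\in\Xsz$ and inserting the second produces
$$
\ddt\J(u^\epsilon(t)) + \|w^\epsilon(t)\|_{\Xsz}^2 = \bigl(g(u^\epsilon(t))-(g(u))^\epsilon(t),\,(u^\epsilon)_t(t)\bigr)_{L^2(\Omega)}.
$$
Standard mollifier properties, continuity of $g$, and the uniform bounds of \eqref{ei-i} and \eqref{reg:beta} give $u^\epsilon\to u$ and $w^\epsilon\to w$ in $L^2_{\loc}(S,T;\Xsz)$, $\J(u^\epsilon(t))\to\J(u(t))$ a.e., and vanishing of the commutator on the right-hand side locally in $L^1(S,T)$; passing to the limit produces the energy identity. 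The main technical obstacle is precisely this commutator estimate: since no growth assumption is imposed on $g$, the control of $g(u^\epsilon)-(g(u))^\epsilon$ must be assembled from continuity of $g$, the a.e.\ convergence $u^\epsilon\to u$, the uniform $L^2_{\loc}(L^2)$-bound on $g(u)$ coming from \eqref{reg:beta}, and an equi-integrability argument, while Case (ii) by contrast reduces essentially to a direct application of the Brezis chain rule.
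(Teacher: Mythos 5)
For the case \eqref{reg:u4} your argument is essentially the paper's: both rest on the Br\'ezis chain rule for subdifferentials in $L^2(\Omega)$. The difference is in the decomposition. You apply the chain rule to $Q_\sigma$ alone (after first securing $\Asig u\in L^2_{\loc}(S,T;L^2(\Omega))$ via \eqref{reg:beta}) and treat $\int_\Omega\gciapo(u)\,\dx$ separately by difference quotients; the paper instead applies the chain rule once to the full convex part $\phi(u)=\tfrac12\|u\|_{\Xsigz}^2+\int_\Omega\betaciapo(u)\,\dx$, whose subdifferential is $\Asig u+\beta(u)=w+\lambda u\in L^2_{\loc}(S,T;L^2(\Omega))$ directly from the equation. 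The paper's grouping is not cosmetic: since no growth is imposed on $g$, your claim that ``$\gciapo\in C^2$ plus a standard difference-quotient argument'' yields $\ddt\int_\Omega\gciapo(u)\,\dx=(g(u),u_t)$ is not justified as stated --- the mean-value point $g(\xi_{t,h})$ is uncontrolled. This step can be repaired, but only by invoking the convexity of $\betaciapo$ together with $\beta(u)\in L^2_{\loc}(S,T;L^2(\Omega))$ (two-sided convexity inequalities for the difference quotients), which is precisely the structure the paper exploits by keeping $\betaciapo$ inside the convex functional. With that repair, your case \eqref{reg:u4} is sound.

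For the case \eqref{reg:u3} the paper gives no in-house argument (it defers to \cite[\S 4.6]{ASSe1}), and your time-mollification scheme has a genuine gap at the nonlinear term. First, $g(u^\epsilon)$ is not under control: mollification in time does not commute with $g$, and without a growth condition neither $u^\epsilon(t)\in\Xsigz$ nor the bound $g(u)\in L^2_{\loc}(S,T;L^2(\Omega))$ gives $g(u^\epsilon(t))\in L^2(\Omega)$ (a pointwise-in-time $L^2_x$ bound on $g(u)$ does not transfer to $g$ of a time average), so the intermediate identity $\ddt\int_\Omega\gciapo(u^\epsilon)\,\dx=(g(u^\epsilon),(u^\epsilon)_t)$ and even the membership of the commutator $g(u^\epsilon)-(g(u))^\epsilon$ in a usable space are not established. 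Second, and more seriously, the limit passage cannot be closed with the bounds you list: $(u^\epsilon)_t=-\As w^\epsilon$ is uniformly bounded only in $L^2_{\loc}(S,T;\Xsz')$, so the commutator term would have to vanish in $L^2_{\loc}(S,T;\Xsz)$ --- a gain of \emph{spatial} regularity that time-mollification cannot produce and that is unavailable for $g(u)\in L^2_{\loc}(S,T;L^2(\Omega))$; pairing instead in $L^2(\Omega)$ would require $(u^\epsilon)_t$ bounded in $L^2(L^2)$, which fails (it degenerates like $\epsilon^{-1}$). ``Continuity of $g$ plus equi-integrability'' does not bridge either obstruction. A workable route here again goes through the convex structure (e.g.\ Yosida regularization of $\beta$ and the monotonicity inequalities, as in \cite{ASSe1}), not through a generic commutator estimate.
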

\begin{remark}\label{rem:cond}{\rm 
 The above is in fact a conditional result, in the sense 
 that \eqref{reg:u3} and \eqref{reg:u4} are hypotheses. 
 In the sequel we shall provide a number of actual
 situations where the above assumptions are satisfied.
 In particular this happens when
 $\sigma \ge s$ (so that~\eqref{reg:u3} follows 
 from~\eqref{reg:u}) and under the conditions of Theorem~\ref{thm:reg}
 (when \eqref{reg:u4} follows from \eqref{reg:u2} at least
 for $S>0$). 
 }
\end{remark}

The next proposition is concerned with the (eventual) boundedness of $u = u(x,t)$:

\begin{propo}[Boundedness of $u(x,t)$]\label{P:hoelder}
 Let the assumptions of\/ {\rm Theorem~\ref{thm:exi}} hold and let $(u,w)$ be a weak solution of \eqref{eq:fCH}--\eqref{eq:bc} on $(0,\infty)$. Then, for any $t\ge 1$, it holds that
 $$
 \Asig u(t) + \beta(u(t)) = \lambda u(t) + w(t) \in L^p(\Omega)
 $$
 for $p=s^*:=\frac{2N}{N-2s}$ if $N>2s$ and for any 
 $p\in [1,\infty)$ if $N\le 2s$.
 Moreover, if
 \begin{equation}\label{big:ssig}
   2s + 4 \sigma \BBB > \SSS N,
 \end{equation}
 then there exists a constant $\alpha>0$ depending on $N,s,\sigma$ such that
 \begin{equation}\label{u:bound}
   \| u(t) \|_{C^\alpha(\barO)} \le C
    \quad \mbox{ for all } \ t\ge 1.
 \end{equation}
\end{propo}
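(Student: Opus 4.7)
The first assertion follows from the parabolic smoothing of Theorem~\ref{thm:reg}. Indeed, for any $t\ge 1$ we have $w(t)\in \Xsz \hookrightarrow H^s(\RN)$ with a uniform bound in $t$, and the classical fractional Sobolev embedding yields $w(t)\in L^{s^*}(\Omega)$ when $N>2s$, $w(t)\in L^p(\Omega)$ for every $p<\infty$ when $N=2s$, and $w(t)\in L^\infty(\Omega)$ when $N<2s$. The energy inequality \eqref{ei-i} together with the coercivity \eqref{g4-12} provide a uniform $\Xsigz$-bound for $u(t)$, and hence $u(t)\in L^{2N/(N-2\sigma)}(\Omega)$ (or in better Lebesgue spaces when $N\le 2\sigma$). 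To upgrade $u(t)$ to the same $L^p$-integrability as $w(t)$, I would test the elliptic identity $\Asig u(t)+\beta(u(t))=\lambda u(t)+w(t)$ with a truncated power such as $|T_k(u(t))|^{p-2}T_k(u(t))$, using the monotonicity of $\beta$ with $\beta(0)=0$ to make the $\beta(u)$ contribution nonnegative and to discard it. A Stampacchia-type estimate, followed by the limit $k\to\infty$, then yields $u(t)\in L^p(\Omega)$ uniformly in $t\ge 1$, establishing $\lambda u(t)+w(t)\in L^p(\Omega)$ with the stated exponent $p$.

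For the H\"older bound \eqref{u:bound}, the assumption \eqref{big:ssig} rewrites precisely as $p=s^*>N/(2\sigma)$, which is the critical exponent for the $L^\infty$-theory of the fractional Dirichlet Laplacian: a short computation gives $2N/(N-2s)>N/(2\sigma)\ \Longleftrightarrow\ 4\sigma>N-2s$. I would next run a De Giorgi--Nash--Moser iteration, testing the elliptic identity with increasing powers of $u(t)$ (after truncation) and again exploiting $\beta(u)\,u\ge 0$ to drop the nonlinearity at every step of the recursion. Since the datum $f(t):=\lambda u(t)+w(t)$ sits in $L^p(\Omega)$ uniformly in $t\ge 1$ by the first part, and since $p>N/(2\sigma)$, the iteration closes and produces $\|u(t)\|_{L^\infty(\Omega)}\le C$ for $t\ge 1$. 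Once $u(t)\in L^\infty(\Omega)$, the continuity of $\beta$ gives $\beta(u(t))\in L^\infty(\Omega)$ uniformly, whence $\Asig u(t)=\lambda u(t)+w(t)-\beta(u(t))\in L^\infty(\Omega)$ with a time-independent bound. The boundary-H\"older regularity result of Ros-Oton--Serra (\cite{RoSe12,RoSe13}) for the fractional Dirichlet problem with bounded right-hand side then supplies the uniform estimate \eqref{u:bound} for some exponent $\alpha=\alpha(N,s,\sigma)\in(0,\sigma]$.

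The main technical obstacle is the Moser iteration for $\Asig+\beta(\cdot)$: testing with unbounded powers of $u(t)$ is not immediately legitimate, since a priori $u(t)\in\Xsigz\cap L^p(\Omega)$ only and no growth restriction is assumed on $\beta$. The remedy is the usual truncation--limit--iteration scheme, in which the monotonicity of $\beta$ allows its contribution to be discarded at the discrete level, before one passes to the limit in the truncation parameter and then iterates on $p$. A parallel obstacle is to track the constants along this procedure so that the resulting $L^\infty$- and $C^\alpha$-bounds remain uniform in $t\ge 1$; this relies on the $t$-uniform bounds on $u(t)$ in $\Xsigz$ (from the energy) and on $w(t)$ in $\Xsz$ (from the smoothing effect), which are all the ingredients the iteration sees.
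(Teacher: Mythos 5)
Your first assertion follows essentially the paper's own route: $w(t)\in L^{s^*}(\Omega)$ uniformly for $t\ge1$ from Theorem \ref{thm:reg} and Sobolev embedding, and the integrability of $u(t)$ is raised by testing the elliptic identity with (regularizations of) $|u|^{p-2}u$, discarding the monotone term $\beta(u)$ --- this is exactly Lemma \ref{L:ellip} and Appendix \S\ref{apdx:ss:L:ellip}. One caution: a single test at the level $p=s^*$ is circular, because the datum $f=\lambda u+w$ lies in $L^{s^*}(\Omega)$ only once $u$ does; one must bootstrap from $p_0=2$, gaining the factor $N/(N-2\sigma)$ at each step ($p_{k+1}=Np_k/(N-2\sigma)$) until $s^*$ is reached, where the iteration necessarily stops since $w$ admits no better summability. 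Your remark that one ``iterates on $p$'' covers this, but it is the core of the first part rather than a technical afterthought.

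For the H\"older bound you take a genuinely different and longer route. You correctly identify \eqref{big:ssig} as $s^*>N/(2\sigma)$, but then interpose a De Giorgi--Nash--Moser iteration to reach $u(t)\in L^\infty(\Omega)$ before invoking the bounded-right-hand-side regularity of Ros-Oton--Serra. The paper skips the $L^\infty$ step entirely: from $f(t)\in L^{s^*}(\Omega)$ and $\|\beta(u(t))\|_{L^{s^*}(\Omega)}\le\|f(t)\|_{L^{s^*}(\Omega)}$ (relation \eqref{b-leq-f}) one gets $\Asig u(t)=f(t)-\beta(u(t))\in L^{s^*}(\Omega)$, hence $u(t)\in \Xp$ with $p=s^*$ and a uniform graph-norm bound, and Proposition \ref{new:P0}(ii) --- which packages the Ros-Oton--Serra estimate $\|u\|_{C^\beta(\overline\Omega)}\lesssim\|\Asig u\|_{L^p(\Omega)}$ for $p>N/(2\sigma)$ --- yields \eqref{u:bound} directly. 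If you insist on the Moser route, note that it cannot be the same power bootstrap as in part one (that one stalls at $p=s^*$ precisely because $f$ cannot be made more summable); it must be the version in which the datum's norm is held at the fixed exponent $q=s^*>N/(2\sigma)$ via H\"older, so that the recursion $\|u\|_{L^{\chi(\beta+1)}}^{\beta+1}\lesssim\|f\|_{L^{q}}\,\|u\|_{L^{\beta q'}}^{\beta}$ with $\chi=N/(N-2\sigma)>q'$ closes. With that understood your argument is workable, but the paper's use of the $L^p\to C^\beta$ elliptic estimate is shorter and is precisely what the space $\Xp$ was introduced for.
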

%

We are ready to investigate the long-time behavior of solution trajectories. 
Let us first discuss existence of nonempty $\omega$-limit sets.
\begin{lemma}[Nonempty $\omega$-limit set]\label{L:omega}
 Let the assumptions of\/ {\rm Theorem \ref{thm:exi}} hold and let $(u,w)$ be the unique weak solution of \eqref{eq:fCH}--\eqref{eq:bc} on $(0,\infty)$ as provided by the theorem. Then, for any sequence $t_n \to \infty$, one can take a (not relabeled) subsequence of $(t_n)$ and $\phi \in \Xsigz$ such that 
 \begin{equation}\label{subs}
   u(t_{n}) \to \phi \ \mbox{ strongly in } \Xsigz \quad \mbox{ and } \quad \Esi(u(t_n)) \to \Esi(\phi),
 \end{equation}
 and moreover, $\phi$ solves the stationary problem,
\begin{equation}\label{elliptic-f}
  g(\phi) \in L^2(\Omega) \quad \mbox{and} \quad \Asig \phi + g(\phi) = 0 \ \mbox{ in } \Xsigz'.
\end{equation}
 In particular, the $\omega$-limit set of $u$ is nonempty 
 and it is contained into the set of all solutions to \eqref{elliptic-f}.
\end{lemma}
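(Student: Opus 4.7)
The plan is to exploit the Lyapunov structure of the system, together with the uniform control \eqref{reg:beta} of the nonlinear term and the parabolic smoothing of Theorem~\ref{thm:reg}.

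\emph{A priori bounds.} The coercivity \eqref{g4-12} along the trajectory and the energy inequality \eqref{ei-i} yield $\sup_{t\geq 0}\|u(t)\|_{\Xsigz}<\infty$, $w\in L^2(0,\infty;\Xsz)$, and the map $t\mapsto\Esi(u(t))$ is non-increasing and bounded from below, hence admits a finite limit $E_\infty$ as $t\to\infty$. Theorem~\ref{thm:reg} further provides $u_t\in L^2(t_0,\infty;\Xsigz)$ for every $t_0>0$, and \eqref{reg:beta} gives the uniform bound $\int_t^{t+1}\|\beta(u(r))\|_{L^2}^2\,\dir\le C$.

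\emph{Auxiliary times and extraction.} Since $\int_{t_n}^{t_n+1}\|w(r)\|_{\Xsz}^2\,\dir\to 0$ and $\int_{t_n}^{t_n+1}\|u_t(r)\|_{\Xsigz}^2\,\dir\to 0$ (by integrability on $(0,\infty)$), a mean-value selection produces $\tau_n\in[t_n,t_n+1]$ with $\|w(\tau_n)\|_{\Xsz}\to 0$ and $\sup_n\|\beta(u(\tau_n))\|_{L^2}<\infty$; moreover the absolute continuity of $u$ with values in $\Xsigz$ gives
$$
\|u(\tau_n)-u(t_n)\|_{\Xsigz}\le\Big(\int_{t_n}^{t_n+1}\|u_t(r)\|_{\Xsigz}^2\,\dir\Big)^{1/2}\to 0.
$$
Up to a (not relabeled) subsequence, $u(\tau_n)\rightharpoonup\phi$ weakly in $\Xsigz$, strongly in $L^2(\Omega)$ (Proposition~\ref{P:embed}) and a.e.\ in $\Omega$, and $\beta(u(\tau_n))\rightharpoonup\beta(\phi)$ weakly in $L^2(\Omega)$ (the limit being identified through a.e.\ convergence and continuity of $\beta$). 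In particular $g(\phi)=\beta(\phi)-\lambda\phi\in L^2(\Omega)$, and the same weak/strong limits hold for $\{u(t_n)\}$.

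\emph{Stationary equation and strong convergence.} Passing to the limit in \eqref{eq:u} at $t=\tau_n$ tested against any $\zeta\in\Xsigz$, the left-hand side $\int_\Omega w(\tau_n)\zeta\,\dix$ vanishes (as $w(\tau_n)\to 0$ in $L^2(\Omega)$), the bilinear form $\langle\Asig u(\tau_n),\zeta\rangle_{\Xsigz}$ converges by weak $\Xsigz$-convergence, and $\int_\Omega g(u(\tau_n))\zeta\,\dix\to\int_\Omega g(\phi)\zeta\,\dix$ by weak $L^2$-convergence of $g(u(\tau_n))$; this yields \eqref{elliptic-f}. Choosing instead $\zeta=u(\tau_n)$, the same passage together with testing \eqref{elliptic-f} against $\phi$ gives $\|u(\tau_n)\|_{\Xsigz}^2\to\|\phi\|_{\Xsigz}^2$, which in the Hilbert space $\Xsigz$ upgrades weak to strong convergence; then $u(t_n)\to\phi$ strongly in $\Xsigz$ follows from $\|u(t_n)-u(\tau_n)\|_{\Xsigz}\to 0$. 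Finally, Vitali's convergence theorem (with equi-integrability of $\gciapo(u(\tau_n))$ provided by the $L^2$-bound on $\beta(u(\tau_n))$ through $|\gciapo(v)|\lesssim|v\beta(v)|+v^2$) combined with strong $\Xsigz$-convergence gives $\Esi(u(\tau_n))\to\Esi(\phi)$; together with the monotone convergence $\Esi(u(t))\to E_\infty$ this forces $E_\infty=\Esi(\phi)$ and hence $\Esi(u(t_n))\to\Esi(\phi)$.

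\emph{Main difficulty.} The equation \eqref{eq:u} provides useful pointwise information only at instants where both $w$ and $\beta(u)$ are controlled, which the dissipative and uniform regularity estimates only ensure \emph{on average} over unit intervals. The crux of the argument is the transfer of this information from the favourable auxiliary times $\tau_n$ back to the given sequence $t_n$; this is made possible precisely by the parabolic smoothing $u_t\in L^2(t_0,\infty;\Xsigz)$ of Theorem~\ref{thm:reg}, which supplies the necessary $\Xsigz$-modulus of continuity on unit time intervals.
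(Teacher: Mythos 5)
Your proposal is correct, and its overall skeleton (auxiliary times $\tau_n$ chosen by a mean-value argument, weak compactness, passage to the limit in the equation, convergence of norms, uniform convexity) coincides with the paper's. The one genuinely different ingredient is the transfer from the auxiliary times $\tau_n$ back to the given times $t_n$: you invoke the smoothing result of Theorem~\ref{thm:reg}, namely $u_t \in L^2(t_0,\infty;\Xsigz)$, to obtain $\|u(\tau_n)-u(t_n)\|_{\Xsigz}\le\bigl(\int_{t_n}^{t_n+1}\|u_t\|_{\Xsigz}^2\,\dir\bigr)^{1/2}\to 0$ and conclude directly in the $\Xsigz$-topology. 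The paper deliberately avoids Theorem~\ref{thm:reg} here and uses only the energy-level information $u_t\in L^2(0,\infty;\Xsz')$: it transfers in the much weaker $\Xsz'$-norm, so that $u(t_n)\to\phi$ only in $\Xsz'$ at first, and then runs a second upgrade (energy monotonicity, weak lower semicontinuity of $\int_\Omega\betaciapo(\cdot)$, and uniform convexity of $\Xsigz$) to recover strong $\Xsigz$-convergence of $u(t_n)$. Your route is shorter but rests on the smoothing estimate, whose rigorous proof requires the discrete-in-time approximation of Appendix~\ref{apdx:thm3}; the paper's route is longer but self-contained at the level of the basic energy inequalities. Two further minor divergences, both legitimate: you identify the weak $L^2$-limit of $\beta(u(\tau_n))$ through a.e.\ convergence and continuity of $\beta$, where the paper uses demiclosedness of maximal monotone operators; and you pass to the limit in $\int_\Omega\gciapo(u(\tau_n))\,\dix$ by Vitali's theorem with the domination $0\le\betaciapo(v)\le v\beta(v)$ (valid since $\beta$ is monotone with $\beta(0)=0$), where the paper uses the subdifferential inequality combined with lower semicontinuity. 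One point worth making explicit if you write this up: the simultaneous selection of $\tau_n$ controlling both $\|w(\tau_n)\|_{\Xsz}$ (tending to zero) and $\|\beta(u(\tau_n))\|_{L^2}$ (merely bounded) should be done with the weighted mean-value trick (divide the vanishing integral by its own value before summing, as the paper does with $a_n^{-1}$), and $\tau_n$ must be picked from the full-measure set of times at which \eqref{eq:u0} holds.
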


Let us now show that, under additional assumptions, the $\omega$-limit
set of any weak solution is a singleton. This will be proved by using
a variant of the so-called \L ojasiewicz-Simon inequality suitable for
fractional Dirichlet Laplace operators.
To this end, we introduce notions of \emph{real analyticity} of $g$ as follows:
 \begin{enumerate}
 \item[(H1)] {\bf (Uniform analyticity)} Let $0 < a, b \leq \infty$. Assume that $g \in C^\infty(-a,b)$, and moreover, there exist constants $C,M \geq 0$ such that, for all $s \in (-a,b)$ and $n \in \N$ large enough,
	     $$
	     |g^{(n)}(s)| \leq C M^n n!.
	     $$
 \item[(H2)] {\bf (Analyticity with a singularity at the origin)} Let $0 < b \leq \infty$. Assume that $g \in C^\infty(0,b)$, and moreover, there exist constants $C,M \geq 0$ such that, for all $s \in (0,b)$ and $n \in \mathbb N$ large enough,
	     $$
	     |g^{(n)}(s)| \leq C \dfrac{M^n n!}{|s|^n}.
	     $$
 \end{enumerate}
When either $a$ or $b$ is infinite, we further assume the so-called Sobolev subcritical growth condition,
\begin{enumerate}
 \item[(H3)] There exist constants $C \geq 0$ and $0 \leq p \leq \frac{N+2\sigma}{(N-2\sigma)_+}$ such that
	    \begin{equation}\label{hypo-sig1}
	     |g'(s)| \leq C(|s|^{p-1}+1) \quad \mbox{ for all } \ s \in \R.
	    \end{equation}
\end{enumerate}
\begin{remark}\label{R:analytic}
 {\rm
 \begin{enumerate}
  \item[(i)] In case (H1) is satisfied, by Taylor's theorem, $g(s)$ can be uniformly expanded as follows:
	     \begin{equation}\label{taylor}
	      g(s) = \sum_{n=0}^\infty \dfrac{g^{(n)}(s_0)}{n!}(s-s_0)^n
	     \end{equation}
	     converges uniformly for $s_0 \in (-a,b)$ and $s \in (-a,b) \cap (s_0-(2M)^{-1},s_0+(2M)^{-1})$. Typical examples of $g(s)$ satisfying (H1) would be polynomial and trigonometric functions (with $a = -\infty$ and $b = \infty$) and exponential and hyperbolic functions (with finite $a$, $b$). In case (H2) is satisfied, one cannot ensure the uniform convergence of \eqref{taylor} in $(0,\vep)$, for $g^{(n)}$ may have a singularity at the origin. A typical example of the case would be power functions $g(s) = s^m$ with noninteger $m \geq 0$. In view of \eqref{hp:g1}, $m$ is restricted to be not less than $1$ (then $g \in C^1$), and hence, the case $m < 1$ is beyond the scope.
\item[(ii)] In particular, (H2) is equivalent to the condition that there exists $\theta \in (0,\pi/2)$ such that $g$ can be extended as a (complex) analytic function on the sector $S_{\theta,\beta} = \{ z \in \mathbb C \colon |z| \in (0,\beta), \ \mathrm{Arg}\, z \in (-\theta,\theta)\}$ (in particular, $g$ is real analytic in $(0, \beta)$).
  \item[(iii)] (H3) implies that there exists a constant $C \geq 0$ such that
	       \begin{align}
		|g(s)| &\leq C \left(|s|^{p}+1\right) \quad \mbox{ for all } \ s \in \R,\label{g-ciappo}\\
		|\widehat{g}(s)| &\leq C \left(|s|^{p+1}+1\right) \quad \mbox{ for all } \ s \in \R.\label{g}
	       \end{align}
	       Hence the functional
	       $$
	       G(u) := \int_\Omega \widehat{g}(u(x)) \, \d x \quad \mbox{ for } \ u \in \Xsigz
	       $$
	       turns out to be of class $C^2$ in $\Xsigz$, since $g \in C^1(\R)$ by \eqref{hp:g1} and $\Xsigz \hookrightarrow L^{p+1}(\Omega)$ by $1 \leq p+1 \leq 2N/(N-2\sigma)_+$. In particular, $G':u \mapsto g(u(\cdot))$ is a Nemytskii operator of class $C^1$ from $\Xsigz$ to $\Xsigz'$. 
 \item[(iv)] Throughout this paper, we always focus on the behavior of $g(u)$ around the origin $u = 0$, since the homogeneous Dirichlet boundary condition is imposed and solutions $u(x,t)$ and equilibria $\phi(x)$ take values around zero. Therefore we treat the cases (H1) and (H2) only. Namely, $g(u)$ is either uniformly analytic in an open interval including $0$ or analytic in $(0,\vep)$ with a singularity at the origin. However, one can also generalize the results of the present paper, in particular, \L S inequality (see Theorem \ref{T:LSI-bdd} below), to the case where $g(u)$ is analytic in an open interval $I$ (and $g(u)$ may have singularity on the boundary of $I$) in an analogous way.
 \end{enumerate}
 }
\end{remark}
Our main result reads,
\begin{theorem}[Convergence of solutions to equilibria]\label{thm:Loj}
 Let \eqref{hp:g1}, \eqref{hp:g3} and \eqref{hp:g4} hold and let $(u,w)$ be a weak solution of \eqref{eq:fCH}--\eqref{eq:bc} defined over $(0,\infty)$. 
Let $\phi \in \Xsigz$ be a solution to \eqref{elliptic-f} satisfying \eqref{subs} for some sequence $t_n \to \infty$. In addition, assume one of the following {\rm (i)--(iv)}\/{\rm :}
\begin{enumerate}
 \item[\rm (i)] Assume that {\rm (H1)} and {\rm (H3)} hold with $a = b = +\infty$. 
 \item[\rm (ii)] Assume that {\rm (H1)} holds with some $a, b \in (0,\infty)$ and that
	      $$
	      \|\phi\|_{L^\infty(\Omega)}, \|u\|_{L^\infty(\Omega \times (\tau,\infty))} < a \wedge b
	      $$
	      for some $\tau > 0$.
 \item[\rm (iii)] Assume that {\rm (H2)} and {\rm (H3)} hold with $b = \infty$ and that $\phi > 0$ a.e.~in $\Omega$. 
 \item[\rm (iv)] Assume that {\rm (H2)} holds with some $b \in (0,\infty)$ and that
	      $$
	      0 < \phi < b \ \mbox{ a.e.~in } \Omega, \quad \|u\|_{L^\infty(\Omega \times (\tau,\infty))} < b
	      $$
	      for some $\tau > 0$.
\end{enumerate}
 Then the whole trajectory $\{u(t) \colon t \geq 0\}$ converges to the stationary solution, namely
\begin{equation}\label{alltraj}
   u(t) \to \phi \quad \mbox{ strongly in } \Xsigz \ \mbox{ as } \ t \to +\infty.
\end{equation}
\end{theorem}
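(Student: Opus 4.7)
The plan follows the classical Haraux--Jendoubi--Chill--Simon scheme, suitably adapted to the fractional setting. By Theorem~\ref{thm:exi} the map $t\mapsto\Esi(u(t))$ is non-increasing, and the coercivity \eqref{g4-12} gives a lower bound, so $\Esi(u(t))\searrow E_\infty$ for some $E_\infty\in\mathbb R$; hypothesis \eqref{subs} forces $E_\infty=\Esi(\phi)$. One may assume $\Esi(u(t))>E_\infty$ for every $t\geq 0$, since otherwise \eqref{ei1} gives $w\equiv 0$ beyond that moment and therefore $u\equiv\phi$. By Theorem~\ref{thm:reg}, on every interval $(t_0,\infty)$ with $t_0>0$ the solution is regular enough to invoke Theorem~\ref{thm:ener}, which upgrades \eqref{ei1} to the identity $\frac{\d}{\d t}\Esi(u(t))=-\|w(t)\|_{\Xsz}^{2}$. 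Combining \eqref{eq:w} with Proposition~\ref{P:iso} yields $\|u_t(t)\|_{\Xsz'}=\|w(t)\|_{\Xsz}$. The target reduction is therefore to establish $\|u_t\|_{\Xsz'}\in L^{1}(t_0,\infty)$: this forces $u(t)$ to admit a limit in $\Xsz'$, which, thanks to Lemma~\ref{L:omega}, must coincide with $\phi$ and yield the strong $\Xsigz$-convergence \eqref{alltraj}.

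In each of the four cases, I plan to apply the fractional \L ojasiewicz--Simon inequality (Theorem~\ref{T:LSI-bdd}) at $\phi$: this furnishes exponents $\theta\in(0,1/2]$, constants $\omega,\delta>0$, and a neighbourhood $\mathcal U$ of $\phi$ in the topology of $\Xp$ (for a suitable $p$) such that
\[
|\Esi(v)-\Esi(\phi)|^{1-\theta}\leq \omega\,\|\Asig v+g(v)\|_{(\Xp)'}\qquad\forall\,v\in\mathcal U.
\]
Case (i) uses the subcritical growth (H3) so that $u\mapsto g(u)$ is a $C^1$ Nemytskii map on $\Xsigz$ (Remark~\ref{R:analytic}(iii)); case (ii) replaces (H3) by the prescribed $L^\infty$-bounds on $u$ and $\phi$ to confine the trajectory to the interval of uniform analyticity of $g$; cases (iii)--(iv) exploit the strict positivity $\phi>0$ together with the uniform H\"older regularity of Proposition~\ref{P:hoelder} to keep $u(t,\cdot)$, eventually, bounded away from the singular locus $\{u=0\}$ where $g^{(n)}$ may blow up.

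Since $w=\Asig u+g(u)$ by \eqref{eq:u}, as long as $u(t)\in\mathcal U$ the energy identity combined with the \L ojasiewicz--Simon inequality gives
\[
-\frac{\d}{\d t}\bigl[\Esi(u(t))-E_\infty\bigr]^{\theta}=\theta\,[\Esi(u(t))-E_\infty]^{\theta-1}\|w(t)\|_{\Xsz}^{2}\geq \theta\omega^{-1}\,\frac{\|w(t)\|_{\Xsz}^{2}}{\|w(t)\|_{(\Xp)'}}\geq c\,\|u_t(t)\|_{\Xsz'},
\]
where the last inequality uses the embedding $\Xsz\hookrightarrow(\Xp)'$, dual to $\Xp\hookrightarrow\Xsz'$ (which should hold because $\Xp$ sits inside the domain of $\Asig$ seen on $L^p$). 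A standard continuation argument then closes the proof: by \eqref{subs}, interpolating the $\Xsigz$-strong convergence $u(t_n)\to\phi$ with the uniform $C^\alpha$-bound of Proposition~\ref{P:hoelder}, we can place $u(t_n)$ arbitrarily deep inside $\mathcal U$. Setting $T:=\sup\{T'>t_n : u(t)\in\mathcal U \text{ for all } t\in[t_n,T']\}$, integrating the displayed inequality on $[t_n,T]$ controls $\int_{t_n}^{T}\|u_t\|_{\Xsz'}\,\d\tau$ by $c^{-1}[\Esi(u(t_n))-E_\infty]^{\theta}$, which is arbitrarily small; this prevents $u(t)$ from ever exiting $\mathcal U$, so $T=\infty$, and the desired $L^1$-bound on $\|u_t\|_{\Xsz'}$ follows.

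The main obstacle I anticipate is the norm-matching embedded in the last display: verifying that the dissipation measured in $\|\cdot\|_{\Xsz}$ dominates the dual $\Xp$-norm on the right-hand side of Theorem~\ref{T:LSI-bdd}, and, dually, promoting the $\Xsigz$-strong convergence from Lemma~\ref{L:omega} to the $\Xp$-topology required for entering $\mathcal U$ (this is where Proposition~\ref{P:hoelder} and, if needed, hypothesis \eqref{big:ssig} enter). In the singular cases (iii)--(iv) there is the further subtle issue of excluding the nodal set $\{u=0\}$ along the trajectory, which requires combining the equi-H\"older estimate with the strict positivity of $\phi$ to obtain a uniform lower bound $u(t,x)\geq\eta>0$ on compact subsets of $\Omega$.
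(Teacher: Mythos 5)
Your overall scheme (\L S inequality at $\phi$, differential inequality for $H(t)=(\Esi(u(t))-\Esi(\phi))^\theta$, continuation/exit-time argument reducing everything to $\|u_t\|_{\Xsz'}\in L^1$) is the same as the paper's. However, you have misquoted Theorem~\ref{T:LSI-bdd}: the inequality it provides is
$|\Esi(v)-\Esi(\phi)|^{1-\theta}\le\omega\|\Asig v+g(v)\|_{\Xsigz'}$ on an $\Xsigz$-neighbourhood $\{\|v-\phi\|_{\Xsigz}<\delta\}$ (plus an $L^\infty$ constraint on $v$ in cases (ii), (iv)), \emph{not} an inequality with the $(\Xp)'$-norm on a $\Xp$-neighbourhood. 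The space $\Xp$ is internal to the proof of Theorem~\ref{T:LSI-bdd}; it never appears in its statement. This matters because the two ``main obstacles'' you flag — promoting the convergence $u(t_n)\to\phi$ from $\Xsigz$ to $\Xp$ in order to enter $\mathcal U$, and bounding $u(t,\cdot)$ away from the nodal set in cases (iii)--(iv) — are artifacts of your reformulation and do not arise: with the theorem as actually stated, \eqref{subs} places $u(t_n)$ in the neighbourhood directly, the chain $\Xsz\hookrightarrow L^2(\Omega)\hookrightarrow\Xsigz'$ gives $\|w\|_{\Xsigz'}\lesssim\|w\|_{\Xsz}$ with no further work, and positivity is required only of the equilibrium $\phi$ (where Hopf's lemma applies), never of the trajectory. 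If you insist on the $(\Xp)'$/$\Xp$ version, the obstacle you anticipate is genuine and you have no mechanism to overcome it: nothing in the a priori estimates gives continuity, let alone smallness, of $t\mapsto\|u(t)-\phi\|_{\Xp}$.

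The second gap is in the continuation step. Integrating the differential inequality on $[t_n,T]$ controls $\int_{t_n}^{T}\|u_t\|_{\Xsz'}\,\d\tau$, hence the excursion of $u(t)$ from $u(t_n)$ \emph{only in the $\Xsz'$-norm}; the exit criterion for $\mathcal U$ is measured in the strictly stronger $\Xsigz$-norm (let alone $\Xp$). So ``this prevents $u(t)$ from ever exiting $\mathcal U$'' does not follow. The paper closes this by arguing by contradiction at the exit times $s_n:=\inf\{s\ge t_n:\|u(s)-\phi\|_{\Xsigz}\ge\nu\}$: the $L^1$-bound gives $u(s_n)\to\phi$ in $\Xsz'$, the uniform bound \eqref{wu-bdd} gives weak $\Xsigz$-convergence, the monotonicity of $\Esi(u(\cdot))$ gives $\Esi(u(s_n))\to\Esi(\phi)$, and the lower-semicontinuity plus uniform-convexity argument of Lemma~\ref{L:omega} upgrades this to strong $\Xsigz$-convergence, contradicting $\|u(s_n)-\phi\|_{\Xsigz}\ge\nu$. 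You need to supply this bridge explicitly. Two minor points: the detour through Theorems~\ref{thm:reg} and~\ref{thm:ener} to get an energy \emph{equality} is unnecessary (the inequality \eqref{ei1} suffices, and the paper is careful to use only $\int_a^b(-H')\,\d\tau\le H(a)-H(b)$ for the non-increasing, right-continuous, a.e.\ differentiable $H$, since absolute continuity of $H$ is not available); and in case (iv) you must also propagate the $L^\infty$ constraint $\|u(t)\|_{L^\infty}<\eta$ required by part (b) of Theorem~\ref{T:LSI-bdd}, which is where the hypothesis $\|u\|_{L^\infty(\Omega\times(\tau,\infty))}<b$ enters.
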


\begin{remark}\label{rem:bound}{\rm
If $g$ satisfies the sign condition,
\begin{equation}\label{g:sign}
  g(r)\sign r > 0 
   \quext{for all } \ |r| > \gamma
\end{equation}
 for some $\gamma > 0$, it then follows that
\begin{equation}\label{hypo-phi}
 \mathrm{ess}\,\sup_{x \in \Omega} |\phi(x)| < \infty
\end{equation}
for any equilibria (i.e., solutions to \eqref{elliptic-f}) $\phi$. 
Indeed, \eqref{hypo-phi} can be immediately proved
by elementary maximum principle arguments. Namely, one 
may test the equation in \eqref{elliptic-f} by $(\phi - \gamma)^+$ and
by $- (\phi + \gamma)^-$, where $(s)^\pm := \max \{\pm s, 0\} \geq 0$ for $s \in \R$.
}
\end{remark}

 Combining Proposition \ref{P:hoelder} and Theorem \ref{thm:Loj}, we readily obtain
 \begin{corollary}
  Let $u = u(x,t)$ be a solution of \eqref{eq:fCH}--\eqref{eq:bc} on $(0,\infty)$ such that the $\omega$-limit set $\omega(u)$ of $u$ contains an equilibrium $\phi$. 
  Taking the assumptions of Theorem \ref{thm:exi}, together with
  \eqref{big:ssig} and \eqref{g:sign}, it holds that $\phi$ belongs to $L^\infty(\Omega)$ and $\|u(t)\|_{C^\alpha(\overline\Omega)} \leq C$ for all $t \geq 1$.
  Hence if either {\rm (H1)} or {\rm [}{\rm (H2)} along with $\phi > 0$ in $\Omega${\rm ]} holds for some $a,b > 0$, then the $\omega$-limit set of $u$ contains the equilibrium $\phi$ only. 
 \end{corollary}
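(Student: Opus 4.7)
\begin{pf}[Proof plan.]
The corollary is essentially a synthesis of Remark~\ref{rem:bound}, Proposition~\ref{P:hoelder} and Theorem~\ref{thm:Loj}, so the plan is to verify the hypotheses of those statements in turn. First, I would establish $\phi \in L^\infty(\Omega)$ via a Stampacchia-type maximum principle on the stationary equation \eqref{elliptic-f}. Concretely, the truncations $(\phi-\gamma)^+$ and $-(\phi+\gamma)^-$ lie in $\Xsigz$ (because $\phi \in \Xsigz$ and the Gagliardo seminorm is nonincreasing under $1$-Lipschitz truncations that fix $0$). Testing \eqref{elliptic-f} with $(\phi-\gamma)^+$ one has $\langle \Asig \phi,(\phi-\gamma)^+ \rangle_{\Xsigz} \ge 0$ (this is the standard Kato/Stampacchia inequality for $\Asig$, see e.g.\ \cite{ASSe1}), while $\int_\Omega g(\phi)(\phi-\gamma)^+\,\dx \ge 0$ by the sign condition~\eqref{g:sign}. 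Hence $(\phi-\gamma)^+ \equiv 0$ a.e., and symmetrically $-(\phi+\gamma)^- \equiv 0$, yielding $\|\phi\|_{L^\infty(\Omega)} \le \gamma$.

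Next, assumption~\eqref{big:ssig} together with the hypotheses of Theorem~\ref{thm:exi} puts us exactly in the setting of Proposition~\ref{P:hoelder}, so
\[
 \|u(t)\|_{C^\alpha(\barO)} \le C \quad \mbox{for all } t \ge 1.
\]
In particular, this provides a uniform $L^\infty$ bound
\[
 C_0 := \|u\|_{L^\infty(\Omega \times [1,\infty))} < \infty.
\]

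With the two boundedness statements in hand, the only remaining task is to match them with the appropriate branch of Theorem~\ref{thm:Loj}. Since $\phi \in \omega(u)$ by hypothesis, Lemma~\ref{L:omega} gives a sequence $t_n \to \infty$ with $u(t_n) \to \phi$ strongly in $\Xsigz$ and $\Esi(u(t_n)) \to \Esi(\phi)$, so condition~\eqref{subs} of Theorem~\ref{thm:Loj} is met. Under (H1) with the given $a,b>0$, one enlarges $a,b$ if necessary so that $\max\{\gamma,C_0\} < a\wedge b$ (either (H1) with $a=b=\infty$ yields case (i), or the enlarged finite $a,b$ trigger case (ii) with $\tau=1$). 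Under (H2) together with $\phi > 0$ in $\Omega$, the bound $\|\phi\|_{L^\infty}\le \gamma$ and $\|u\|_{L^\infty(\Omega\times[1,\infty))}\le C_0$ place us in case (iv) (or (iii) if $b=\infty$). In either situation Theorem~\ref{thm:Loj} yields $u(t)\to\phi$ strongly in $\Xsigz$ as $t\to\infty$, which forces $\omega(u)=\{\phi\}$.

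The delicate point of the argument is not in the verification itself but in ensuring that the ``some $a,b>0$'' in the hypothesis is compatible with the $L^\infty$ threshold $\max\{\gamma,C_0\}$ produced by the two boundedness steps; once the right choice of $a,b$ is fixed, everything reduces to a direct application of Theorem~\ref{thm:Loj}.
\end{pf}
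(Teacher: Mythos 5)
Your synthesis follows exactly the route the paper intends: the paper offers no proof beyond ``combining Proposition~\ref{P:hoelder} and Theorem~\ref{thm:Loj}, we readily obtain'', with the $L^\infty$ bound on $\phi$ coming from Remark~\ref{rem:bound}. Your Stampacchia-type truncation argument for $\|\phi\|_{L^\infty(\Omega)}\le\gamma$ is precisely the test-function argument sketched in that remark (and the monotonicity inequality $\langle \Asig\phi,(\phi-\gamma)^+\rangle_{\Xsigz}\ge 0$ you invoke is correct, since $r\mapsto(r-\gamma)^+$ is nondecreasing, $1$-Lipschitz and vanishes at $0$), the $C^\alpha$ bound is a direct citation of Proposition~\ref{P:hoelder} under \eqref{big:ssig}, and the passage through Lemma~\ref{L:omega} to secure \eqref{subs} before invoking Theorem~\ref{thm:Loj} is the right way to match hypotheses.

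There is, however, one step that does not work as written: you cannot ``enlarge $a,b$ if necessary so that $\max\{\gamma,C_0\}<a\wedge b$.'' Conditions (H1)--(H2) are hypotheses on $g$ over the interval $(-a,b)$; asserting them for a larger interval is a strictly stronger assumption on $g$ that is not available (the uniform derivative bounds, and even analyticity, may fail outside $(-a,b)$). The legitimate move goes in the opposite direction only: if (H1) holds with $a=b=\infty$, you may \emph{restrict} to a finite window $(-a',b')$ with $a'\wedge b'>\max\{\gamma,C_0\}$ and land in case (ii) of Theorem~\ref{thm:Loj} with $\tau=1$ (this also sidesteps (H3), which case (i) would require but the corollary does not assume). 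If instead $a,b$ are genuinely finite, the corollary must be read as tacitly requiring the compatibility $\max\{\gamma,C_0\}<a\wedge b$ (respectively $<b$ in the (H2) case); you correctly identify this as the delicate point in your closing paragraph, but the enlargement device you propose does not resolve it --- it should simply be stated as part of the hypotheses being matched against case (ii) or (iv) of Theorem~\ref{thm:Loj}. With that correction the argument is complete and coincides with the paper's.
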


The following theorem will play a key role to prove Theorem \ref{thm:Loj}.

 \begin{theorem}[\L S inequality for fractional Dirichlet  Laplacian]\label{T:LSI-bdd}
Assume \eqref{hp:g1}. Let $\sigma \in (0,1)$ and let $\phi \in \Xsigz \cap L^\infty(\Omega)$ be a solution of the stationary problem \eqref{elliptic-f}.
\begin{enumerate}
 \item[\rm (a)] Suppose that either {\rm (\ichi)} or {\rm (\san)} holds\/{\rm :}
 \begin{enumerate}
  \item[\rm (\ichi)] {\rm (H1)} and {\rm (H3)} are satisfied with $a = b = \infty$. 
  \item[\rm (\san)] {\rm (H2)} and {\rm (H3)} hold with $b = \infty$ and $\phi > 0$ a.e.~in $\Omega$. 
 \end{enumerate}
 Then there exist $\theta \in (0,1/2]$ and $\omega, \delta > 0$ such that
\begin{equation}\label{LSI-2}
\left| \Esi(v) - \Esi(\phi) \right|^{1-\theta}
\leq \omega \left\| \Asig v + g(v) \right\|_{\Xsigz'},
\end{equation} 
	    whenever $v \in \Xsigz$ and $\|v-\phi\|_{\Xsigz} < \delta$.
 \item[\rm (b)] Let $\eta > 0$ and suppose that either {\rm (\nii)} or {\rm (\shi)} holds\/{\rm :}
 \begin{enumerate}
  \item[\rm (\nii)] {\rm (H1)} and $\|\phi\|_{L^\infty} < \gamma$ are satisfied    with $a,b,\gamma > 0$ satisfying $\gamma, \eta < a \wedge b < \infty$.
  \item[\rm (\shi)] {\rm (H2)} holds and $0 < \phi < \gamma$ a.e.~in $\Omega$ with $b,\gamma> 0$ satisfying $\gamma, \eta < b < \infty$.
 \end{enumerate}
 Then there exist $\theta \in (0,1/2]$ and $\omega, \delta > 0$ such that
\begin{equation}\label{LSI-2+}
\left| \Esi(v) - \Esi(\phi) \right|^{1-\theta}
\leq \omega \left\| \Asig v + g(v) \right\|_{\Xsigz'},
\end{equation} 
 whenever $v \in \Xsigz$, $\mathrm{ess\,sup}_{x \in \Omega}|v(x)| < \eta$ and $\|v-\phi\|_{\Xsigz} < \delta$.
\end{enumerate}
 \end{theorem}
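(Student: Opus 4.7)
The plan is to adapt the classical Simon--Chill--Haraux strategy of proving an infinite-dimensional Łojasiewicz-Simon inequality by Lyapunov--Schmidt reduction to a finite-dimensional analytic function, to which Proposition \ref{P:L} applies. Writing $F(v) := \Asig v + g(v) = \Esi'(v)$, the critical point assumption reads $F(\phi) = 0$, and the linearization at $\phi$ is $L := \Asig + g'(\phi)\,\mathrm{Id} : \Xsigz \to \Xsigz'$. The first step is to establish that $L$ is a symmetric Fredholm operator of index zero. By Proposition \ref{P:iso}, $\Asig$ is an isomorphism, and under each of the hypotheses (\ichi)--(\shi) one checks that $g'(\phi) \in L^\infty(\Omega)$ (using $\phi \in L^\infty$ together with (H3) in case (\ichi), using the pointwise bounds in case (\nii), and using the positivity $\phi > 0$ away from zero in the singular cases (\san) and (\shi)). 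The multiplication $v \mapsto g'(\phi)\,v$ then factors through $\Xsigz \hookrightarrow L^2(\Omega) \hookrightarrow \Xsigz'$, which is compact by Proposition \ref{P:embed}; hence $L$ is Fredholm of index $0$ and its kernel $N := \ker L$ is finite-dimensional.

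The second step is the Lyapunov--Schmidt reduction. Let $P$ and $Q := \mathrm{Id} - P$ denote the orthogonal projections of $\Xsigz$ onto $N$ and $N^\perp$. Applying the analytic implicit function theorem to $(\psi, \xi) \mapsto Q F(\phi + \psi + \xi)$ near $(0,0) \in N \times N^\perp$ --- whose derivative in $\xi$ is $QL|_{N^\perp}$, an isomorphism onto $Q(\Xsigz')$ --- produces a real analytic map $\xi = \xi(\psi)$ with $\xi(0) = 0$, $\xi'(0) = 0$, such that $QF(\phi + \psi + \xi(\psi)) = 0$ for small $\psi$. The reduced energy $\Gamma(\psi) := \Esi(\phi + \psi + \xi(\psi))$ is then a real analytic function on an open neighbourhood of $0$ in the finite-dimensional space $N$, and Proposition \ref{P:L} yields $\theta \in (0,1/2]$, $C,\delta_0 > 0$ such that $|\Gamma(\psi) - \Gamma(0)|^{1-\theta} \leq C\,|\nabla \Gamma(\psi)|$ for $|\psi| < \delta_0$.

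The third step transports this back. For $v$ close to $\phi$ in $\Xsigz$ (and, in case (b), additionally satisfying the $L^\infty$ bound), write $v - \phi = \psi + \eta$ with $\psi := P(v - \phi)$ and $\eta := Q(v - \phi)$. A second-order Taylor expansion of $\Esi$ around $\phi + \psi + \xi(\psi)$, combined with $QF(\phi + \psi + \xi(\psi)) = 0$, delivers
\[
|\Esi(v) - \Gamma(\psi)| \lesssim \|v - \phi - \psi - \xi(\psi)\|_{\Xsigz}^2 \lesssim \|F(v)\|_{\Xsigz'}^2,
\]
while the chain rule applied to $\Gamma$ gives
\[
|\nabla \Gamma(\psi)| \lesssim \|P F(\phi + \psi + \xi(\psi))\|_{\Xsigz'} \lesssim \|F(v)\|_{\Xsigz'}.
\]
Combining these with $\Esi(v) - \Esi(\phi) = (\Esi(v) - \Gamma(\psi)) + (\Gamma(\psi) - \Gamma(0))$ and the finite-dimensional Łojasiewicz estimate yields \eqref{LSI-2}/\eqref{LSI-2+}: either the remainder term $\|F(v)\|_{\Xsigz'}^2$ already dominates (and $\theta \le 1/2$ makes it absorbable into $\|F(v)\|_{\Xsigz'}^{1/(1-\theta)}$), or $|\Esi(v) - \Esi(\phi)| \approx |\Gamma(\psi) - \Gamma(0)|$ and the reduced inequality takes over.

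The main obstacle is the real analyticity of the Nemytskii part $v \mapsto \io \gciapo(v)\,\dx$ (and hence of $F$), which is needed both for the analytic implicit function theorem and for the analyticity of $\Gamma$. In the uniformly analytic cases (\ichi) and (\nii), the Taylor series $g(\phi + h) = \sum_n \frac{g^{(n)}(\phi)}{n!} h^n$ converges in an $L^{p+1}$- (respectively $L^\infty$-) neighbourhood of $\phi$ thanks to (H3) and $\Xsigz \hookrightarrow L^{p+1}(\Omega)$, yielding analyticity of $F$ from $\Xsigz$ into $\Xsigz'$. The genuinely hard cases are the singular ones (\san) and (\shi), where $g$ is only analytic on $(0,b)$ and the Taylor coefficients behave like $M^n n!/\phi(x)^n$; here smallness of $h = v - \phi$ in $\Xsigz$ does not suffice to ensure pointwise convergence, and one must measure $h$ in a norm that controls $h/\phi$ pointwise. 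This is precisely the role of the space $\Xp$ anticipated in the Introduction: analyticity of $F$ must be established between appropriate $\Xp$-type neighbourhoods of $\phi$, and the final $\Xsigz$/$\Xsigz'$ form of the inequality is then recovered by exploiting the supplementary $L^\infty$-smallness of $v$ in part (b) (and the a.e.~positivity of $\phi$ in (\san)) to propagate the control from $\Xp$ back to $\Xsigz$.
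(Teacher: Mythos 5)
Your overall architecture --- reduce to a finite\--dimensional real\--analytic function via the kernel $\mathcal N$ of the linearization and invoke Proposition \ref{P:L} --- is the same as the paper's; the paper implements the reduction by inverting $\J'+P$ (with $P$ the $L^2$\--projection onto $\mathcal N$) rather than by solving $QF=0$ implicitly, but these two devices are interchangeable. The genuine gap is in the step your argument leans on most heavily: the analyticity of $F=\J'$. You assert that in the uniform cases (i) and (iii) the Taylor expansion of $g$ together with (H3) and $\Xsigz\hookrightarrow L^{p+1}(\Omega)$ yields analyticity of $F$ \emph{from $\Xsigz$ into $\Xsigz'$}. This is false unless $g$ is a polynomial: the $n$\--linear form $[h_1,\dots,h_n]\mapsto g^{(n)}(v)h_1\cdots h_n/n!$ does not map $(\Xsigz)^n$ boundedly into $\Xsigz'$ for large $n$, because a product of $n$ factors each lying in $L^{2N/(N-2\sigma)}(\Omega)$ only lies in $L^{2N/(n(N-2\sigma))}(\Omega)$, whose exponent drops below the dual exponent $2N/(N+2\sigma)$ once $n$ is large; equivalently, smallness of $\|h\|_{\Xsigz}$ gives no pointwise smallness of $h$, so $\sum_n M^n|h(x)|^n$ need not converge where the analytic implicit function theorem requires it. This is precisely why the paper introduces $\Xp$ with $p>N/(2\sigma)$ (so that $\Xp\hookrightarrow L^\infty$ by Proposition \ref{new:P0}), proves $\mathcal N\subset\Xp$ and that $\mathscr L(\phi)+P$ is an isomorphism from $\Xp$ onto $L^p(\Omega)$, and establishes analyticity only of $\J'+P:\Xp\to L^p(\Omega)$; the final inequality in the $\Xsigz/\Xsigz'$ norms is then recovered through separate $C^1$ estimates on $B=(\J'+P)^{-1}:\Xsigz'\to\Xsigz$. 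Without this two\--tier structure ($C^1$ on $\Xsigz$, analytic on $\Xp$) the reduction does not get off the ground even in case (i).

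For the singular cases (ii) and (iv) you correctly diagnose that one must control $h/\phi$ pointwise, but you defer the entire construction, and the two ingredients that make it work are absent: the fractional Hopf lemma (Proposition \ref{P:Hopf}), which yields $\phi(x)\geq C_0\,\mathrm{dist}(x,\partial\Omega)^\sigma$, and the matching upper bound $|h(x)|\leq C\|h\|_{\Xp}\,\mathrm{dist}(x,\partial\Omega)^\sigma$ coming from $\Xp\hookrightarrow C^\sigma(\overline\Omega)$ for $p>N/\sigma$; together these give $\|h/\phi\|_{L^\infty}\lesssim\|h\|_{\Xp}$ and hence summability of $\sum_n M^n\|h/\phi\|_{L^\infty}^n$. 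Finally, in cases (iii) and (iv) no growth condition is imposed on $g$, so $\J$ need not be of class $C^2$ (nor even finite) on any $\Xsigz$\--neighbourhood of $\phi$, and the set $\{v:\|v\|_{L^\infty(\Omega)}<\eta\}$ is not open in $\Xsigz$; the paper's truncation of $g$ to a bounded $\tilde g$ agreeing with $g$ on $\{|s|<\gamma\vee\eta\}$ is what restores a genuine open neighbourhood on which the machinery applies, and this step is missing from your argument.
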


 \begin{remark}
  {\rm
\begin{enumerate}
 \item[(i)] One can also treat the case where $g$ is analytic on $(-a,0)$ with a singularity at the origin (cf.~(H2)) and $-a < \phi < 0$ a.e.~in $\Omega$ by 
  performing the transform $u \mapsto -u$, $\phi \mapsto -\phi$ and $g(\cdot) \mapsto g(-\ \cdot\,)$ and applying Theorem \ref{T:LSI-bdd}. Moreover, by translation, 
  one may further generalize the inequality to $g(\cdot)$ analytic on more general intervals $I$ (which may not include zero and may have singularity on the boundary) and $\phi(x) \in I \setminus \partial I$ a.e.~in $\Omega$.
 \item[(ii)] When $\phi$ is a regular point of $\J$ (i.e., $\J'(\phi)\neq 0$), 
 inequalities \eqref{LSI-2}, \eqref{LSI-2+} follow immediately from the $C^1$ regularity 
  of $\J$ in $\Xsigz$. So \eqref{LSI-2}, \eqref{LSI-2+} also hold true for any $\phi \in \Xsigz$.
 \item[(iii)] Assertion (a) of Theorem~\ref{T:LSI-bdd} for the case (i)
  can be also proved by using the abstract theory developed in~\cite{Chill03}. 
\end{enumerate}
  }
 \end{remark}

 For the classical (Dirichlet) Laplace operator $\Delta$ (i.e., the case $\sigma = 1$) in $L^p(\Omega)$-spaces ($1 < p < \infty$), the domain of $\Delta$ coincides with $W^{2,p}(\Omega) \cap H^1_0(\Omega)$. Indeed, according to the Calderon-Zygmund singular integral theory, $u$ belongs to $W^{2,p}(\Omega) \cap H^1_0(\Omega)$, provided that $\Delta u \in L^p(\Omega)$. However, it is worth mentioning that, for general $\sigma \in (0,1)$, a corresponding property may not be true. To be more precise, even if $\Dsig u$ belongs to $L^p(\Omega)$, it may be false that $u \in W^{2\sigma,p}(\Omega)$. The domain of $\Dsig$ is still unclear in the $L^p(\Omega)$ framework. Furthermore, in contrast with the Schauder theory, $u \in C^\sigma(\overline\Omega)$ at most, even though $\Dsig u \in C^\infty(\overline\Omega)$. For more details, we refer the reader to~\cite[Remarks 7.1 and 7.2]{RoSe13} and~\cite{RoSe12}. This fact prevents us to directly apply proofs of \L S inequalities for the classical Laplacian. Indeed, they are based on $W^{m,p}(\Omega)$ or $C^{m}(\overline\Omega)$ frameworks, where a linearized operator is defined (see, e.g.,~\cite{Simon83,FeiSim00,RyHo}).
 To overcome such a difficulty, for $p \in (1,\infty)$, we introduce the space
  $$
\Xp := \left\{
u \in \Xsigz \cap L^p(\Omega) \colon \Asig u \in L^p(\Omega)
\right\}.
$$
 This acts as the natural domain of the $\Dsig$ seen as an unbounded linear operator of $L^p(\Omega)$.
 We cannot characterize the elements of $\Xp$ in terms of regularity. On the other hand, as shown below,
 if $\Xp$ is equipped with the graph norm
$$
\|u\|_{\Xp} := \|u\|_{L^p(\Omega)} + \|u\|_{\Xsigz} + \|\Asig u\|_{L^p(\Omega)} \quad \mbox{ for } \ u \in \Xp,
$$
then it gains good properties and can be used as a space for the long-time analysis. 

The following proposition will play a crucial role to prove not only Theorem \ref{T:LSI-bdd} but also Proposition \ref{P:hoelder}.
\begin{propo}\label{new:P0}
 The following {\rm (i)--(iii)} hold true\/{\rm :}
 \begin{enumerate}
  \item[\rm (i)] $\Xp$ is a uniformly convex Banach space.
  \item[\rm (ii)] If $\frac N {2\sigma} < p < \infty$, then $\Xp$ is
	       continuously embedded in $C^\beta({\overline\Omega})$ with $\beta = \sigma \wedge (2\sigma - \frac N p )$.
  \item[\rm (iii)] $|u|_{\Xp} := \|\Asig u\|_{L^p(\Omega)}$ is also an
	equivalent norm to $\|\cdot\|_{\Xp}$, provided that $p \geq \frac{2N}{N+2\sigma}$.
 \end{enumerate}
\end{propo}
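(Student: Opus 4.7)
The plan is to realize $\Xp$ as a closed subspace of $Y := L^p(\Omega) \times \Xsigz \times L^p(\Omega)$ via the embedding $J\colon u \mapsto (u, u, \Asig u)$. Endowing $Y$ with the $\ell^p$-product norm makes $Y$ uniformly convex, since each factor is uniformly convex (by Clarkson's inequalities for $L^p(\Omega)$ with $1<p<\infty$, and since $\Xsigz$ is a Hilbert space). The pullback of this product norm under $J$ is equivalent to the graph norm $\|\cdot\|_{\Xp}$, so after renorming with this equivalent norm it suffices to verify that $J(\Xp)$ is closed in $Y$: if $J(u_n)\to (a,b,c)$, then $u_n \to a$ in $L^p(\Omega)$ and $u_n \to b$ in $\Xsigz$ (hence also in $L^2(\Omega)$ by Proposition~\ref{P:embed}); a subsequence argument forces $a=b$, while the continuity of $\Asig\colon\Xsigz\to\Xsigz'$ gives $\Asig u_n\to\Asig b$ in $\Xsigz'$, so $c=\Asig b$ in $\Xsigz'$; as $c\in L^p(\Omega)$ by construction, $b\in\Xp$ and $J(b)=(a,b,c)$. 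A closed subspace of a uniformly convex space is uniformly convex, yielding (i).

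\textbf{Part (ii).} Given $u\in\Xp$ with $p>N/(2\sigma)$, set $f:=\Asig u\in L^p(\Omega)$. The strategy is to represent $u$ through the Green function $G_\sigma$ of the fractional Dirichlet Laplacian, which enjoys the pointwise bound $0\le G_\sigma(x,y)\le C|x-y|^{2\sigma-N}$. H\"older's inequality together with the condition $p>N/(2\sigma)$ (ensuring $(N-2\sigma)p'<N$) then yields $\|u\|_{L^\infty(\Omega)}\le C\|f\|_{L^p(\Omega)}$. Combining this $L^\infty$-bound with the $C^\sigma(\barO)$ regularity theorem of Ros-Oton and Serra (cited in the introduction), applied with right-hand side in $L^\infty(\Omega)$, together with standard Riesz-potential H\"older estimates (giving exponent $2\sigma-N/p$ in the range $N/(2\sigma)<p<N/\sigma$), produces the continuous embedding $\Xp\hookrightarrow C^\beta(\barO)$ with $\beta=\sigma\wedge(2\sigma-N/p)$.

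\textbf{Part (iii) and the main obstacle.} One direction $|u|_{\Xp}\le\|u\|_{\Xp}$ is trivial. For the reverse I control $\|u\|_{\Xsigz}$ and $\|u\|_{L^p(\Omega)}$ separately by $\|\Asig u\|_{L^p(\Omega)}$. The isomorphism property of $\Asig\colon\Xsigz\to\Xsigz'$ (Proposition~\ref{P:iso}, with $s$ replaced by $\sigma$) gives $\|u\|_{\Xsigz}\le C\|\Asig u\|_{\Xsigz'}$; the Sobolev embedding $\Xsigz\hookrightarrow L^{2N/(N-2\sigma)}(\Omega)$ (for $N>2\sigma$; trivially $L^\infty$ if $N\le 2\sigma$) dualizes to $L^{p_*}(\Omega)\hookrightarrow\Xsigz'$ with $p_*:=2N/(N+2\sigma)$, and since $p\ge p_*$ on the bounded domain $\Omega$ one has $L^p(\Omega)\hookrightarrow L^{p_*}(\Omega)$, so $\|\Asig u\|_{\Xsigz'}\lesssim\|\Asig u\|_{L^p(\Omega)}$. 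To control $\|u\|_{L^p(\Omega)}$: if $p\le 2N/(N-2\sigma)$, Sobolev yields $\|u\|_{L^p(\Omega)}\lesssim\|u\|_{\Xsigz}$, already estimated; if $p>N/(2\sigma)$, part~(ii) provides $\|u\|_{L^\infty(\Omega)}\lesssim\|\Asig u\|_{L^p(\Omega)}$, and hence also $\|u\|_{L^p(\Omega)}$. The delicate range is the intermediate one $2N/(N-2\sigma)<p\le N/(2\sigma)$, which arises only when $N>6\sigma$: here neither Sobolev alone nor (ii) applies directly. The expected resolution is a finite bootstrap via the Hardy--Littlewood--Sobolev $L^q\to L^r$ mapping of the Riesz-type convolution $f\mapsto\int_\Omega G_\sigma(\cdot,y)f(y)\,\d y$, iterated from $u\in L^{2N/(N-2\sigma)}(\Omega)$ until $L^p$ is reached. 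Justifying these HLS-type estimates on the bounded domain with the Dirichlet condition, through the pointwise control $G_\sigma(x,y)\lesssim|x-y|^{2\sigma-N}$, is the main technical hurdle of part~(iii).
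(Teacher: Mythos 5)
Your part (i) is correct and in fact more careful than the paper's own argument: the paper checks completeness by the same Cauchy-sequence reasoning and then simply asserts that uniform convexity ``readily follows from the definition,'' whereas you correctly observe that the $\ell^1$-type graph norm must first be replaced by the equivalent $\ell^p$-product norm on $L^p(\Omega)\times\Xsigz\times L^p(\Omega)$ before invoking closedness of the image of $J$. Your estimate $\|u\|_{\Xsigz}\lesssim\|\Asig u\|_{L^p(\Omega)}$ in part (iii) via the duality pairing is also essentially the paper's computation ($\|u\|_{\Xsigz}^2=\langle\Asig u,u\rangle\leq\|\Asig u\|_{L^p}\|u\|_{L^{p'}}\lesssim\|\Asig u\|_{L^p}\|u\|_{\Xsigz}$, using $p'\leq 2N/(N-2\sigma)$).

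The genuine gap is in the bound $\|u\|_{L^p(\Omega)}\lesssim\|\Asig u\|_{L^p(\Omega)}$. Your case split leaves the intermediate range $2N/(N-2\sigma)<p\leq N/(2\sigma)$ (nonempty when $N>6\sigma$) unresolved, and the proposed Hardy--Littlewood--Sobolev bootstrap through the Green function is only announced, not carried out; moreover the pointwise bound $G_\sigma(x,y)\lesssim|x-y|^{2\sigma-N}$ for the \emph{Dirichlet} fractional Laplacian on a bounded domain is itself a nontrivial input that you neither prove nor cite. The paper sidesteps all of this by invoking Ros-Oton and Serra~\cite[Proposition 1.4]{RoSe13}, which directly provides $\|u\|_{L^q(\Omega)}\lesssim\|\Asig u\|_{L^p(\Omega)}$ with $q=Np/(N-2\sigma p)_+\geq p$ for \emph{every} $p\in[1,\infty)$ (hence $\|u\|_{L^p}\lesssim\|\Asig u\|_{L^p}$ on the bounded domain, with no case distinction), as well as the $C^\beta(\overline\Omega)$ bound with $\beta=\sigma\wedge(2\sigma-N/p)$ that constitutes the entirety of the paper's proof of (ii). In effect your parts (ii) and (iii) attempt to re-derive that cited proposition from scratch; the missing HLS/Green-function estimates are exactly its content. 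To close the argument you should either cite that result or supply a complete proof of the $L^p$-to-$L^q$ mapping property, including the boundary behaviour of $G_\sigma$.
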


 \begin{proof}
We first prove (i). One can easily check that $\|\cdot\|_{\Xp}$ is a norm of $\Xp$. So let us next check that $\Xp$ is complete. Let $(u_n)$ be a Cauchy sequence in $\Xp$. Then $u_n$ converges to $u$ strongly in $\Xsigz \cap L^p(\Omega)$, and hence, $\Asig u_n \to \Asig u$ strongly in $\Xsigz'$. Moreover, since $(\Asig u_n)$ forms a Cauchy sequence in $L^p(\Omega)$, we find that $\Asig u_n \to \Asig u$ strongly in $L^p(\Omega)$. Thus $u \in \Xp$ and $u_n \to u$ strongly in $\Xp$. Moreover, the uniform convexity readily follows from the definition of $\|\cdot\|_{\Xp}$.

As for (ii), due to~\cite[Proposition 1.4]{RoSe13}, if $\frac N {2\sigma} < p < \infty$, then we see that
$$
\|u\|_{C^\beta(\mathbb R^N)} \lesssim \|\Asig u\|_{L^p(\Omega)}
\quad \mbox{ for } \ u \in \Xp, \quad \mbox{ with } \ \beta = \sigma \wedge (2\sigma - N/p),
$$
which implies (ii).
  
Recalling~\cite[Proposition 1.4]{RoSe13} again, we deduce that, for any $1 \leq p < \infty$,
$$
\|u\|_{L^p(\Omega)} \lesssim \|\Asig u\|_{L^p(\Omega)}
\quad \mbox{ for } \ u \in \Xp.
$$
Moreover, if $p \geq \frac{2N}{N+2\sigma}$ (equivalently, $p' \leq
 \frac{2N}{N-2\sigma}$), then one has
$$
\|u\|_{L^{p'}(\Omega)} \lesssim \|u\|_{\Xsigz}
\quad \mbox{ for } \ u \in \Xsigz.
$$
Hence it holds that, for $u \in \Xp$, i.e., $\Asig u \in L^p(\Omega)$,
\begin{align*}
 \|u\|_{\Xsigz}^2
&= \dfrac{C_\sigma}2 [u]_{H^\sigma(\RN)}^2\\
&= \left\langle \Asig u, u
 \right\rangle_{\Xsigz}
\leq \|\Asig u\|_{L^p(\Omega)} \|u\|_{L^{p'}(\Omega)}
\lesssim \|\Asig u\|_{L^p(\Omega)} \|u\|_{\Xsigz},
\end{align*}
whence follows
$$
\|u\|_{\Xsigz} \lesssim \|\Asig u\|_{L^p(\Omega)}
\quad \mbox{ for } \ u \in \Xp.
$$
Therefore $|\cdot|_{\Xp}$ is equivalent to $\|\cdot\|_{\Xp}$, and thus, (iii) is proved.
 \end{proof}
Here we also remark that
\begin{remark}
 {\rm
 For any $h \in L^r(\Omega) \subset \Xsigz'$ with $r \gg 1$, the unique weak solution $u
 \in \Xsigz$ of
\begin{equation}\label{elliptic-w}
\Asig u = h \ \mbox{ in } \Xsigz'
\end{equation}
exists. Here we further note that $u$ is also a solution of the
 Dirichlet problem
\begin{equation}\label{elliptic}
\Dsig u = h \ \mbox{ in } \Omega, \quad u = 0 \ \mbox{ in } \mathbb R^N
\setminus \Omega
\end{equation}
in the sense of~\cite{RoSe12, RoSe13}. Indeed, since the weak solution
$u$ belongs to $\Xsigz \hookrightarrow H^\sigma(\mathbb R^N)$, we find that
$\Dsigm u \in L^2(\mathbb R^N)$. Therefore, by the Plancherel theorem,
 it follows that
\begin{align*}
\int_\Omega h \varphi ~\dx
&= \left\langle \Asig u, \varphi \right\rangle_{\Xsigz}\\
&= \dfrac{C_\sigma}2\iint_{\mathbb R^{2N}} 
\dfrac{\left(u(x)-u(y)\right)\left(\varphi(x)-\varphi(y)\right)}{|x-y|^{N+2\sigma}}
 ~\dx \, \dy\\
&= \int_{\mathbb R^N_\xi} |\xi|^{2\sigma} \widehat u(\xi) \widehat
 \varphi(\xi) ~\d \xi\\
&= \int_{\mathbb R^N} 
\Dsigm u \; \Dsigm \varphi
~\dx
\quad \mbox{ for any } \ \varphi \in \Xsigz,
\end{align*}
which is nothing but the definition of solution of \eqref{elliptic}
 in~\cite{RoSe12,RoSe13}.
So one can apply the results of~\cite{RoSe12,RoSe13} to weak solutions
 of \eqref{elliptic-w} as well.
}
\end{remark}

\section{Existence and regularization of weak solutions}
\label{sec:para}

In this Section we give highlights of proofs of Theorems~\ref{thm:exi}--\ref{thm:ener} and Proposition \ref{P:hoelder}.


\subsection{Proof of Theorem~\ref{thm:exi}}\label{ss:ex}%

We first observe that all assertions of Theorem \ref{thm:exi} except \eqref{reg:gu} (in Definition \ref{def:weak}) and \eqref{reg:beta} 
can be proved as in~\cite{ASSe1}, where \eqref{reg:gu} is actually proved for a power nonlinearity 
$g(u) = |u|^{p-2}u - \lambda u$ with $p \in (1,\infty) \setminus \{2\}$.
So it remains to show \eqref{reg:gu} and \eqref{reg:beta} for general $g(u)$ satisfying \eqref{hp:g1}, \eqref{hp:g3} and \eqref{hp:g4}. 
To this aim, we first approximate $\beta$ by its Yosida approximation $\beta_\vep$ for $\vep  > 0$. Then $\beta_\vep$ is a linearly growing maximal
monotone function of class $C^1$ (due to \eqref{hp:g1} and definition of Yosida approximation). Then one can verify that 
$g_\vep(s) := \beta_\vep(s) - \lambda s$ also fulfills \eqref{hp:g1} and \eqref{hp:g3} (indeed, \eqref{hp:g4} is not necessary 
to construct a solution on an arbitrary finite interval $[0,T]$). For any $T > 0$ and each $\vep > 0$, one can construct a 
solution $(u_\vep,w_\vep)$ on $[0,T]$ of \eqref{eq:fCH}--\eqref{eq:bc} with $g$ replaced by $g_\vep$ and derive corresponding 
energy inequalities \eqref{ei1}--\eqref{ei-i} as in~\cite{ASSe1}, where the power function $\beta(s) = |s|^{q-2}s$ is treated 
and whose existence result can be easily extended to smooth nonlinearities with power growth. Moreover, as in~\cite{ASSe1}, 
one tests a (regularized) equation by $\beta_\vep(u_\vep)$ to get
$$
\| \beta_\vep(u_\vep(t)) \|_{L^2(\Omega))}^2 \leq C \| w_\vep(t) \|_{L^2(\Omega)}^2
   + C \| u_\vep(t) \|_{L^2(\Omega)}^2
   \quad \mbox{ for a.e. } t \in (0,T),
$$
where $C$ is independent of $\vep$.
Thus $\beta_\vep(u_\vep)$ turns out to be uniformly bounded in $L^2(0,T;L^2(\Omega))$ with respect to $\vep$
in view of the fact that the right hand side above is uniformly controlled due to the a-priori estimates
resulting from the energy inequality. 
%

Therefore, as in~\cite{ASSe1}, one can pass the limit as $\vep \to 0$ and obtain a solution $(u,w)$ on $[0,T]$ of \eqref{eq:fCH}--\eqref{eq:bc} 
with energy inequalities \eqref{ei1}--\eqref{ei-i} such that $\beta(u) \in L^2(0,T;L^2(\Omega))$. 
In particular, we have, by \eqref{hp:g4} (and hence \eqref{g4-12}),
\begin{equation}\label{exi:11}
  \int_0^t \| w(r) \|_{\Xsz}^2 \,\dir 
   + \kappa_0 \| u(t) \|_{\Xsigz}^2 
   \le C,
   \quad \perogni t\ge 0,
\end{equation}
which implies $w \in L^2(0,\infty;\Xsz)$ (hence $u_t \in L^2(0,\infty;\Xsz')$ by \eqref{eq:w}) and $u \in L^\infty(0,\infty;\Xsigz)$. Furthermore, 
the right-continuity of $t \mapsto \J(u(t))$ and that of $t \mapsto u(t)$ (in the strong topology of $\Xsigz$) can be also proved as in~\cite{ASSe1}.

Now, it remains to derive \eqref{reg:beta} (which also implies \eqref{reg:gu}). We formally test \eqref{eq:u} by $\beta(u)$ and integrate it over the generic
interval $(t,t+T)$, $t\ge 0$, $T>0$. Owing to the monotonicity of 
$\beta$ (that is, $\langle\Asig u, \beta(u)\rangle_{\Xsigz} \geq 0$, formally), we obtain
 \begin{align}
  \lefteqn{
  \| \beta(u) \|_{L^2(t,t+T;L^2(\Omega))}^2
  }\nonumber\\
   & \le \int_t^{t+T} \big( w(r) + \lambda u(r) , \beta(u(r)) \big)\, \dir \nonumber \\
 \nonumber   
   & \le  \frac12 \| \beta(u) \|_{L^2(t,t+T;L^2(\Omega))}^2
   + \| w \|_{L^2(t,t+T;L^2(\Omega))}^2
   + \lambda^2 \| u \|_{L^2(t,t+T;L^2(\Omega))}^2\\
 \label{exi:12}
  & \le  \frac12 \| \beta(u) \|_{L^2(t,t+T;L^2(\Omega))}^2
   + C \| w \|_{L^2(0,\infty;\Xsz)}^2
   + C T \| u \|_{L^\infty(0,\infty;\Xsigz)}^2
\end{align}
(see also Appendix \S \ref{apdx:s:beta-est} for a rigorous derivation). Thus \eqref{reg:beta} follows, and it also provides in particular \eqref{reg:gu} and completes the proof. \qed

\subsection{Proof of Theorem~\ref{thm:reg}}%
Also in this case we just give formal estimates which can be made rigorous by approximation arguments (see Appendix \S \ref{apdx:thm3} for more details). 
In view of \eqref{eq:w} and \eqref{exi:11}, for any $t_0 > 0$ there exists
$t_1 \in (0,t_0)$ such that 
\begin{equation}\label{reg:11}
  \| u_t(t_1) \|_{\Xsz'}^2 = \| w(t_1) \|_{\Xsz}^2 \le C t_0^{-1}.
\end{equation}
Then, let us test \eqref{eq:w} by $w_t$. Let us also differentiate
\eqref{eq:u} in time and test the result by $u_t$. Summing the obtained
relations we then get
\begin{align}\nonumber
  & \frac12 \ddt \| w \|_{\Xsz}^2
   + \| u_t \|_{\Xsigz}^2
   + \io \beta'(u(x)) | u_t(x) |^2 \, \dix  
   = \lambda \| u_t \|_{L^2(\Omega)}^2\\
 \label{reg:12}
  & \mbox{}~~~~~~~~~~
  \le \frac12 \| u_t \|_{\Xsigz}^2
  + c \| u_t \|_{\Xsz'}^2
  \le \frac12 \| u_t \|_{\Xsigz}^2
  + c \| w \|_{\Xsz}^2,
\end{align}
thanks also to Ehrling's lemma and to the properties of $\As$. Then,
integrating over $(t_1,t)$, and using \eqref{eq:w}, \eqref{exi:11} and \eqref{reg:11}, we infer (by $t_0 > t_1$) that
\begin{equation}\label{reg:13}
  \| w(t) \|_{\Xsz}^2
   + \| u_t(t) \|_{\Xsz'}^2
   + \int_{t_0}^t \| u_t(r) \|_{\Xsigz}^2 \, \d r
  \le C ( 1 + t_0^{-1} )
\end{equation}
for all $t \geq t_0 > 0$. Here we note that $C$ above is independent of $t$ (and any final time $T$).
This implies \eqref{reg:u2}-\eqref{reg:w2}, as desired. \qed

\subsection{Proof of Theorem \ref{thm:ener}}

First, recall by \eqref{eq:ubeta} that 
\begin{equation}\label{der-H}
 \Asig u + \beta(u) = w + \lambda u.
\end{equation}
In case \eqref{reg:u3}, we refer the reader to~\cite[\S 4.6]{ASSe1}. In case \eqref{reg:u4} holds, one can apply a standard chain-rule for subdifferential operators in Hilbert spaces (see~\cite{HB1}) 
to (an $L^2$-extension of) the convex part of the energy functional defined on $H_0$,
$$
\phi(u) := \begin{cases}
	    \frac 1 2 \|u\|_{\Xsigz}^2 + \int_\Omega \betaciapo(u(x)) \, \d x &\mbox{ if } \ u \in \Xsigz \ \mbox{ and } \ \betaciapo(u(\cdot)) \in L^1(\Omega),\\
	    +\infty &\mbox{ otherwise}
	   \end{cases} 
$$
for $u \in H_0$. Here $\betaciapo$ is a primitive function of $\beta$, i.e., $\partial \betaciapo = \beta$, and it is lower semicontinuous and convex. 
Then by means of \eqref{reg:u4} and $\partial \phi(u) = \Asig u + \beta(u) \in L^2(0,T;H_0)$ by \eqref{der-H}, one deduces that $t \mapsto \phi(u(t))$ is absolutely continuous on $[0,T]$ and that
$$
\left( \Asig u(t) + \beta(u(\cdot,t)), u_t(t) \right)
= \left( \partial \phi(u(t)), u_t(t) \right) = \dfrac \d {\d t} \phi(u(t)) \ \mbox{ for a.e. } t \in (0,T),
$$
where we also used the fact that $\partial \phi (u)$ coincides with $\Asig u + \beta(u)$. Hence the assertion follows immediately. \qed

\subsection{\bf Proof of Proposition \ref{P:hoelder}}
We shall bootstrap regularity for $u$ by viewing equation~\eqref{eq:u} as a time-dependent family of elliptic problems, i.e.,
\begin{equation}\label{eq:u:ell}
  \Asig u + \beta(u) = f \ \mbox{ in } \Xsigz',
\end{equation}
where we have set $f:=\lambda u + w$.
Then, we shall determine which is the highest exponent
$p$ for which we can prove
\begin{equation}\label{boot:10}
  \| f(t) \|_{L^p(\Omega)}
    \le C,
\end{equation}
at least for large $t$. Correspondingly, from the fact that
\begin{equation}\label{b-leq-f}
 \|\beta(u(t))\|_{L^p(\Omega)} \leq   \| f(t) \|_{L^p(\Omega)}
\end{equation}
by the monotonicity of $\beta$ (see Appendix \S \ref{apdx:ss:bu}), one derives that
\begin{equation}\label{boot:1b}
  \| u(t) \|_{\Xp}
    \le C.
\end{equation}

We shall prove in fact that \eqref{boot:10} and hence \eqref{boot:1b} hold for $p = s^*$, where $s^*$ is given by
$$
s^*=\frac{2N}{N-2s}.
$$
Let us start with considering the case when $s < N/2$, which is the most difficult one (and, also, it always occurs when $N\ge 2$). Then, from \eqref{reg:w2} and Sobolev's embeddings we have
\begin{equation}\label{boot:11}
  \| w(t) \|_{L^{s^*}(\Omega)}
  \le C \quad \mbox{ for all } \ t \geq 1.
\end{equation}
We shall prove that also $\lambda u$ has the same summability.
Indeed, from 
\eqref{reg:u} we know that 
\begin{equation}\label{boot:12}
  \| u(t) \|_{L^{\sigma^*}(\Omega)}
  \le C,
  \quad \sigma^*:=\frac{2N}{N-2\sigma},
  \quad \mbox{ for all } \ t \geq 0.
\end{equation}
provided $\sigma < \frac{N}2$ (of course, for $\sigma \ge \frac{N}2$,
we have better). Now, if $\sigma^*\ge s^*$ (or, in other words, $\sigma\ge s$),
we reach the conclusion. 

So, let us assume $\sigma^*< s^*$ (or, equivalently, $\sigma< s$).
Then, we may apply:
\begin{lemma}\label{L:ellip}
 Let $p\in [2,\infty)$ and assume\/ \eqref{boot:10}. Then
 the solution $u$ to \eqref{eq:u:ell} satisfies
 \begin{equation}\label{reg:u:ell}
   u \in W^{\frac{2\sigma}p,p}(\RN),
    \quad \| u \|_{W^{\frac{2\sigma}p,p}(\RN)} \le c \| f \|_{L^p(\Omega)}
 \end{equation}
\end{lemma}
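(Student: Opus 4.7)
\smallskip\noindent\textbf{Proof sketch of Lemma~\ref{L:ellip}.}
The plan is to test \eqref{eq:u:ell} against $|u|^{p-2}u$, exploit the monotonicity of $\beta$ to discard one term, and then convert the resulting expression for $\langle \Asig u,|u|^{p-2}u\rangle_{\Xsigz}$ into a bound on the Gagliardo seminorm of $W^{2\sigma/p,p}(\RN)$ by means of an elementary pointwise inequality. First, we observe that \eqref{b-leq-f} yields $\|\Asig u\|_{L^p(\Omega)}\le 2\|f\|_{L^p(\Omega)}$, so that $u\in\Xp$ by Proposition~\ref{new:P0}; since $p\ge 2\ge 2N/(N+2\sigma)$, part~(iii) of that proposition gives
$$
\|u\|_{L^p(\Omega)}\ \lesssim\ \|\Asig u\|_{L^p(\Omega)}\ \lesssim\ \|f\|_{L^p(\Omega)}.
$$
This will take care of the $L^p$-part of the norm in \eqref{reg:u:ell}.

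For the seminorm, the formal computation (which needs to be justified through a truncation procedure: replace $u$ by $T_k(u)$ inside the nonlinearity $r\mapsto |r|^{p-2}r$, use that a Lipschitz composition with $u\in\Xsigz$ still belongs to $\Xsigz$, and pass to the limit $k\to\infty$ by monotone convergence in the Gagliardo integrand) produces
$$
\frac{C_\sigma}{2}\iint_{\RdN}\!\frac{(u(x)-u(y))\bigl(|u(x)|^{p-2}u(x)-|u(y)|^{p-2}u(y)\bigr)}{|x-y|^{N+2\sigma}}\,\dx\,\dy
+\iO \beta(u)|u|^{p-2}u\,\dx
=\iO f|u|^{p-2}u\,\dx.
$$
The second term on the left is non-negative because $\beta$ is monotone with $\beta(0)=0$, and the right-hand side is bounded by $\|f\|_{L^p(\Omega)}\|u\|_{L^p(\Omega)}^{p-1}$ via H\"older. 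The key algebraic fact is the elementary inequality
$$
\bigl(|a|^{p-2}a-|b|^{p-2}b\bigr)(a-b)\ \ge\ c_p\,|a-b|^p\qquad\text{for all } a,b\in\RR,\ p\ge 2,
$$
which can be checked by a direct calculation (splitting into the cases $|b|\le |a|/2$ and $|b|>|a|/2$, with symmetry handling the signs). Applying this inequality pointwise in $(x,y)$ and noting that the exponent $s=2\sigma/p$ produces exactly $|x-y|^{N+sp}=|x-y|^{N+2\sigma}$ in the denominator, we conclude
$$
[u]_{W^{2\sigma/p,p}(\RN)}^{p}\ \lesssim\ \iint_{\RdN}\!\frac{|u(x)-u(y)|^p}{|x-y|^{N+2\sigma}}\,\dx\,\dy\ \lesssim\ \|f\|_{L^p(\Omega)}\|u\|_{L^p(\Omega)}^{p-1}\ \lesssim\ \|f\|_{L^p(\Omega)}^{p},
$$
which combined with the $L^p$-estimate above gives \eqref{reg:u:ell}.

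The main technical obstacle is the rigorous use of $|u|^{p-2}u$ as a test function, since this map is only locally Lipschitz and $u$ is a priori only in $\Xsigz\cap L^p(\Omega)$; the truncation $u\mapsto T_k(u)$ together with the Lipschitz composition rule in $\Xsigz$ is the natural workaround. Everything else reduces to the Stroock--Varopoulos-type pointwise inequality and to Proposition~\ref{new:P0}(iii), both of which are available to us.
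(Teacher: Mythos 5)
Your proposal is correct and follows essentially the same route as the paper: testing \eqref{eq:u:ell} with a suitable regularization of $|u|^{p-2}u$, discarding the $\beta$-term by monotonicity, and converting the bilinear form into the $W^{2\sigma/p,p}(\RN)$ seminorm via the elementary inequality $(a-b)(|a|^{p-2}a-|b|^{p-2}b)\ge \omega_0|a-b|^p$. The only (harmless) differences are that the paper regularizes the test function by the Yosida approximation $\gamma^p_\mu$ of $|s|^{p-2}s$ rather than by truncation, and recovers the $L^p$-part of the norm from the fractional Poincar\'e inequality (Proposition~\ref{apdx:P:poincare}) instead of routing it through \eqref{b-leq-f} and Proposition~\ref{new:P0}.
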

\noindent
(see Appendix \S \ref{apdx:ss:L:ellip} for a proof). Thanks to Sobolev's embeddings, \eqref{reg:u:ell} implies in 
particular
\begin{equation}\label{boot:13}
  u \in L^{\frac{Np}{N-2\sigma}}(\RN), 
   \quad \| u \|_{L^{\frac{Np}{N-2\sigma}}(\RN)} \le c \| f \|_{L^p(\Omega)}.
\end{equation}
Now, we may apply the above lemma starting, say, from $p=p_0=2$. Then, in accordance
with \eqref{boot:12}, we arrive at the first step to $p_1=\sigma^*:=\frac{2N}{N-2\sigma} = \frac{N}{N-2\sigma} p_0 > p_0$.
We may go on until, after a finite number $k$ of steps, $p_k \ge s^*$, as desired.
Notice that we cannot go on with iterations because
the regularity of $f$ has an upper threshold in view
of~\eqref{boot:11} (in other words, we cannot improve the summability 
of $w$). This completes the proof.

As a consequence, we have \eqref{boot:1b} for $p=s^*$ by (iii) of Proposition \ref{new:P0}. Then, we may also apply (ii) of Proposition \ref{new:P0} with that choice of $p$ provided that $p=s^*> N/(2\sigma)$, which corresponds exactly to \eqref{big:ssig}. The desired conclusion is proved. \qed
%


\section{Proof of Lemma \ref{L:omega}}
\label{sec:omega}

This section is devoted to proving Lemma \ref{L:omega}. Here an additional difficulty resides in the lack of regularity of weak solutions 
(particularly from the gap between $\Xsz$ and $\Xsigz$ by $s \neq \sigma$, see~\cite{ASSe1} for more details), compared to the classical
Cahn-Hilliard equation. Indeed, from the definition of weak solutions, one cannot directly deduce energy equalities (or inequalities) 
which could be exploited to prove the assertion. However, such a defect is compensated by the existence-uniqueness part (see Theorem \ref{thm:exi} 
and~\cite{ASSe1}), where several energy inequalities have already been established 
through a construction of weak solutions. Another 
difficulty lies on our rather general choice of $g$. In particular, we do not impose 
here any growth condition on $g$ (equivalently, on $\beta$), 
and hence, we need an extra argument to estimate the nonlinear term $\beta(u)$. To this end, we shall in fact employ \eqref{reg:beta}.

First, we recall \eqref{ei-i}, that is,
$$
  \int^t_0 \|w(r)\|_{\Xsz}^2 \,\d r
   + \J(u(t)) \leq \J(u_0) \quad \mbox{ for all } \ t \geq 0.
$$
Thanks to \eqref{g4-12}, we deduce that
\begin{equation}\label{wu-bdd}
 \int^\infty_0 \|w(r)\|_{\Xsz}^2 \,\d r
  + \sup_{t \geq 0} \| u(t) \|_{\Xsigz}^2
  \leq C,
\end{equation}
with a constant $C \geq 0$ independent of $t$ (but depending on $\Esi(u_0)$).
From equation~\eqref{eq:w}, using the relation (see Appendix \S \ref{apdx:ss:star}),
\begin{equation}\label{star}
  \|v\|_{\Xsz}^2 = \|\As v\|_{\Xsz'}^2 \quad \mbox{ for all } \ v \in \Xsz,
\end{equation}
we also have
$$
  \int^\infty_0 \|u_t(r)\|_{\Xsz'}^2 \,\d r \leq C.
$$
Now, let us fix an arbitrary sequence $t_n \to \infty$. Then
$$
a_n := \int^{t_n}_{t_n-1} \|u_t(r)\|_{\Xsz'}^2 \, \d r \to 0,
$$
which together with \eqref{reg:beta} implies
$$
a_n^{-1} \int^{t_n}_{t_n-1} \|u_t(r)\|_{\Xsz'}^2 \, \d r + \int^{t_n}_{t_n-1} \|\beta(u(r))\|_{L^2(\Omega)}^2 \, \d r \leq C.
$$
Then, there exists a sequence $\tau_n \in [t_n-1, t_n)$ such that
$$
a_n^{-1} \|u_t(\tau_n)\|_{\Xsz'}^2 + \|\beta(u(\tau_n))\|_{L^2(\Omega)}^2 \leq C.
$$
Thus we infer that, up to a non-relabeled subsequence of $n$,
\begin{align}\label{co:11}
  u_t(\tau_n) \to 0 & \quad \mbox{ strongly in } \Xsz',\\
 \label{co:12} 
  \beta(u(\tau_n)) \to \xi & \quad \mbox{ weakly in } L^2(\Omega)
\end{align}
for some function $\xi\in L^2(\Omega)$.
Then, using \eqref{star} with equation~\eqref{eq:w}, we also obtain
\begin{align}\label{co:13}
  \As w(\tau_n) \to 0 \quad &\mbox{ strongly in } \Xsz',\\
 \label{co:14} 
  w(\tau_n) \to 0 \quad &\mbox{ strongly in } \Xsz.
\end{align}
Moreover, since $\Xsigz$ is compactly embedded in $L^2(\Omega)$ for any $\sigma > 0$ (see Proposition \ref{P:embed}), up to a subsequence, one derives from \eqref{wu-bdd} that
\begin{align}\label{co:15}
  u(\tau_n) \to \phi \quad &\mbox{ weakly in } \Xsigz,\\
 \label{co:16}
   & \mbox{ strongly in } L^2(\Omega),\\
 \label{co:17}
   \Asig u(\tau_n) \to \Asig \phi \quad &\mbox{ weakly in }
       \Xsigz',
\end{align}
with some $\phi \in \Xsigz$. Therefore one obtains $\xi = \beta(\phi)$ by the demiclosedness of maximal monotone operators (see, e.g.,~\cite{HB1})
along with \eqref{co:12} and \eqref{co:16}, and moreover, we deduce that
\begin{equation}\label{stapro1}
  \lim_{n\nearrow \infty} \big( \beta(u(\tau_n)), u(\tau_n) \big)
    = \big( \beta(\phi), \phi\big).
\end{equation}
On the other hand, combining the fact that
$$
  \Asig u(\tau_n) + \beta(u(\tau_n)) = w(\tau_n) + \lambda u(\tau_n) \to \lambda \phi \quad \mbox{ strongly in } L^2(\Omega)
$$
and \eqref{co:12} (with $\xi = \beta(\phi)$) and \eqref{co:17}, $\Asig \phi + \beta(\phi) = \lambda \phi$ (in $L^2(\Omega)$). In particular, $\phi$ turns out to be a weak solution of the stationary problem, i.e., $\phi$ solves
\begin{equation}\label{stapro}
 \phi \in \Xsigz \quad \mbox{ and } \quad
  \Asig \phi + g(\phi) = 0 \ \mbox{ in } \Xsigz'.
\end{equation}
Moreover, we observe by \eqref{stapro1} that
\begin{align}
 \lim_{n \to \infty} \|u(\tau_n)\|_{\Xsigz}^2
 &= \lim_{n \to \infty} \left( w(\tau_n) + \lambda u(\tau_n) - \beta(u(\tau_n)), u(\tau_n) \right)\nonumber\\
 &= (-g(\phi),\phi)
 = \|\phi\|_{\Xsigz}^2.
 \label{stapro2}
\end{align}
Relation \eqref{stapro2}, together with \eqref{co:15} and the uniform convexity of $\Xsigz$, implies
$$
  u(\tau_n) \to \phi \quad \mbox{ strongly in } \Xsigz.
$$
By definition of subdifferential and \eqref{co:16}, we also find that
\begin{align*}
\limsup_{n \to \infty} \int_\Omega \betaciapo(u(\tau_n)) \, \d x
&\leq \int_\Omega \betaciapo(\phi) \, \d x
+ \lim_{n \to \infty} \int_\Omega \beta(u(\tau_n)) \left(u(\tau_n)-\phi\right)\, \d x\\
&= \int_\Omega \betaciapo(\phi) \, \d x,
\end{align*}
which together with the lower semicontinuity of $\betaciapo$ entails
$$
\lim_{n \to \infty} \int_\Omega \betaciapo(u(\tau_n)) \, \d x =
\int_\Omega \betaciapo(\phi) \, \d x.
$$
 Combining all these facts, we deduce (by $\gciapo(s) = \betaciapo(s) - (\lambda/2)s^2$ from \eqref{hp:g3}) that
 $$
 \J(u(\tau_n)) \to \J(\phi).
 $$
 Now, let us notice that $\J(u(\cdot))$ is nonincreasing. Hence for general $t_n \to \infty$, one also obtains
$$
\lim_{t_n \to \infty} \J(u(t_n)) = \J(\phi).
$$ 

We further observe that
\begin{align*}
  \|u(t_n) - \phi\|_{\Xsz'}
  & \leq \int^{t_n}_{\tau_n} \|\partial_\tau u(\tau)\|_{\Xsz'} \, \d \tau
  + \|u(\tau_n) - \phi\|_{\Xsz'}
 \\
  & \leq \left(\int^\infty_{\tau_n} \|\partial_\tau u(\tau)\|_{\Xsz'}^2 \, \d \tau\right)^{1/2} \sqrt{t_n-\tau_n}
  + \|u(\tau_n) - \phi\|_{\Xsz'}
 \\
  & \to 0.
\end{align*}
Thus $u(t_n) \to \phi$ strongly in $\Xsz'$.
Furthermore, since $u(t_n)$ is bounded in $\Xsigz$,
we also find that, along a (not relabeled) subsequence, $u(t_n) \to \phi$ strongly in $L^2(\Omega)$.
Noting that
\begin{align*}
\dfrac 1 2 \|u(t_n)\|_{\Xsigz}^2
&= \J(u(t_n)) - \int_\Omega \betaciapo(u(t_n)) \, \dx
+ \dfrac \lambda 2 \|u(t_n)\|_{L^2(\Omega)}^2
\end{align*}
and recalling that $u(t_n) \to \phi$ strongly in $L^2(\Omega)$ and weakly in $\Xsigz$, we see that
\begin{align*}
\dfrac 1 2 \limsup_{n \to \infty} \|u(t_n)\|_{\Xsigz}^2
&\leq \lim_{n \to \infty} \J(u(t_n)) - \liminf_{n \to \infty}\int_\Omega \betaciapo(u(t_n)) \, \dx
+ \dfrac \lambda 2 \lim_{n \to \infty}\|u(t_n)\|_{L^2(\Omega)}^2\\
&\leq \J(\phi) - \int_\Omega \betaciapo(\phi) \, \dx 
+ \dfrac \lambda 2 \|\phi\|_{L^2(\Omega)}^2
= \dfrac 1 2 \|\phi\|_{\Xsigz}^2,
\end{align*}
which along with the uniform convexity of $\Xsigz$ yields
$$
u(t_n) \to \phi \quad \mbox{ strongly in } \Xsigz.
$$
%
%
%
%
 %
 This completes the proof. \qed


\section{Proof of Theorem \ref{T:LSI-bdd}}
\label{sec:Loj-2}

In this section, we shall give a proof of Theorem \ref{T:LSI-bdd}, which provides a \L ojasiewicz-Simon inequality for fractional Laplacian. Due to a defect of regularity 
property for the fractional Dirichlet Laplacian, one needs to modify the standard
arguments of proofs for \L S inequalities (see Introduction). For instance, the (classical) Laplace 
operator defined over $L^r(\Omega)$ (for $r \in (1,\infty)$) with the homogeneous Dirichlet boundary condition has a regular domain, namely, 
$D(-\Delta) = W^{2,r}(\Omega) \cap H^1_0(\Omega)$, and moreover, this property (particularly for $r > 0$ large enough) plays a crucial role in 
the proof in~\cite{FeiSim00} (cf.~Schauder theory plays a similar role in~\cite{RyHo}). However, the fractional Laplace operator $\Ds$ defined 
on $L^r(\Omega) \simeq L^r_0(\R^N)$ may fail to fulfill corresponding properties, e.g., $D(\Ds) = W^{2s,r}(\Omega) \cap \Xsz$ (see~\cite{RoSe12,RoSe13} for some counterexamples).

Concerning the cases (\nii) and (\shi) of Theorem \ref{T:LSI-bdd}, we replace $g(\cdot)$ with a function $\tilde g(\cdot) \in C^1(\R)$ satisfying
\begin{equation}\label{tg-ii}
\tilde{g}(s) = g(s) \quad \mbox{ if } \ |s| < \gamma \vee \eta \quad \mbox{ and } \quad |\tilde{g}(s)| \leq M \quad \mbox{ if } \ |s| > (\gamma \vee \eta) + 1 
\end{equation}
for some constant $M$ large enough. Then we denote by $\tJ$ the energy functional $\J$ whose potential part $g$ is replaced by 
the modified one $\tilde g$. Here and henceforth, we simply write $g$ and $\J$ instead of $\tilde g$ and $\tJ$, respectively, if no confusion may arise. Let us start with the following:
\begin{lemma}
 In any of the cases {\rm (i)--(iv)} of Theorem \ref{T:LSI-bdd}, $\J$ is of class $C^2$ in $\Xsigz$.
 \end{lemma}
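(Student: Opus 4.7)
The plan is to split $\J$ into its quadratic and Nemytskii parts, namely
\begin{equation*}
\J(v) = Q_\sigma(v) + G(v), \qquad Q_\sigma(v) = \tfrac{1}{2}\|v\|_{\Xsigz}^2, \qquad G(v) = \int_\Omega \widehat g(v(x))\,dx,
\end{equation*}
and to verify that each term is of class $C^2$ on $\Xsigz$ separately. The quadratic part $Q_\sigma$ is a continuous quadratic form induced by the Hilbertian inner product of $\Xsigz$, hence of class $C^\infty$ on $\Xsigz$; by Proposition \ref{P:iso} its first derivative equals $\Asig v \in \Xsigz'$, and its second derivative is the constant bounded symmetric bilinear form associated with $\Asig \in \mathcal L(\Xsigz, \Xsigz')$. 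The problem therefore reduces to proving that $G \in C^2(\Xsigz)$.

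In cases (i) and (ii) of Theorem \ref{T:LSI-bdd}, the Sobolev-subcritical growth hypothesis (H3) is in force, and the verification is exactly the content of Remark \ref{R:analytic}(iii): the embedding $\Xsigz \hookrightarrow L^{p+1}(\Omega)$, together with the pointwise bounds $|g(s)| \leq C(|s|^p + 1)$ and $|g'(s)| \leq C(|s|^{p-1}+1)$, shows that the natural candidates
\begin{equation*}
\langle G'(v), h \rangle = \int_\Omega g(v)\, h\, dx, \qquad G''(v)[h,k] = \int_\Omega g'(v)\, h\, k\, dx
\end{equation*}
are well-defined; moreover H\"older's inequality with the triple of exponents $(N/(2\sigma), 2N/(N-2\sigma), 2N/(N-2\sigma))$ in the critical case shows that $G''(v)$ is a bounded symmetric bilinear form on $\Xsigz$, and the continuity of the Nemytskii operators $v \mapsto g(v)$ and $v \mapsto g'(v)$ between the appropriate $L^q$-spaces yields the required $C^1$ dependence of $G'$. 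The singular behavior of $g$ near zero permitted by (H2) in case (ii) is harmless at this level, since \eqref{hp:g1} forces $g \in C^1(\R)$ globally and (H2) only concerns the higher derivatives of $g$ on $(0,\infty)$.

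In cases (iii) and (iv) the functional under consideration is $\tJ$ rather than $\J$, with $g$ replaced by the truncation $\tilde g \in C^1(\R)$ of \eqref{tg-ii}. Choosing the extension so that not only $\tilde g$ but also $\tilde g'$ is globally bounded --- which can be arranged by a standard $C^1$ cutoff based on the global $C^1$ regularity of $g$ given by \eqref{hp:g1} --- the growth hypothesis (H3) is trivially satisfied by $\tilde g$ with, say, $p=1$. The argument of the previous paragraph applied to $\tilde g$ yields $\tJ \in C^2(\Xsigz)$, and the convention of the paragraph preceding the lemma (writing $\J$, $g$ for $\tJ$, $\tilde g$) delivers the claim.

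The main obstacle, which is really technical rather than conceptual, occurs in case (iv): $g$ is a priori smooth only on $(0,b)$, with a possibly singular behavior at the origin encoded in (H2). The resolution is that (H2) constrains the derivatives of $g$ of order $n \geq 2$ through the bound $|g^{(n)}(s)| \le C M^n n!/|s|^n$ but does \emph{not} contradict the global $C^1$ regularity of $g$ imposed by \eqref{hp:g1}; hence the truncated function $\tilde g$ can indeed be chosen globally of class $C^1$ across zero, and the $C^2$ regularity of the energy functional follows with no further difficulty.
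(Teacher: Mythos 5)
Your proposal is correct and follows essentially the same route as the paper: the quadratic part is trivially smooth, cases (i)--(ii) are handled by (H3) via Remark~\ref{R:analytic}(iii), and cases (iii)--(iv) reduce to the same argument after replacing $g$ by the truncation $\tilde g$ of \eqref{tg-ii}, which satisfies \eqref{hypo-sig1}. Your added remarks --- that the cutoff should be arranged so that $\tilde g'$ is also bounded, and that the possible singularity of the higher derivatives at the origin in (H2) does not affect the global $C^1$ regularity guaranteed by \eqref{hp:g1} --- are correct clarifications of points the paper leaves implicit.
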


 \begin{proof}
  In the case of (\ichi) and (\san), due to (H3), the functional
  $$
  G(u) = \int_\Omega \widehat g(u(x)) \, \d x
  $$
  is of class $C^2$ in $\Xsigz$ (see (iii) of Remark \ref{R:analytic}). In the other cases, i.e., (\nii) and (\shi), the modified function $\tilde g$ satisfies \eqref{hypo-sig1} (then \eqref{g-ciappo} and \eqref{g} as well), and hence, $\J$ with $g$ replaced by $\tilde g$ also has $C^2$ regularity.
 \end{proof}

\begin{remark}\label{R:g-modif}
 {\rm
We shall derive a \L ojasiewicz-Simon inequality for $\J$ with the modified function $\tilde g(\cdot)$; then the modification of $g(\cdot)$ defined above will be needed to guarantee the $C^2$ regularity of the energy functional $\J$ in $\Xsigz$. On the other hand, the difference between $g(\cdot)$ and $\tilde g(\cdot)$ cannot be neglected; indeed, we shall apply the classical \L ojasiewicz inequality (see Proposition \ref{P:L}) to a function derived from $\J$ with $\tilde g$ (see \eqref{H} below) defined on a finite dimensional space, and then, all constants appeared in the \L ojasiewicz inequality may depend on the modified function $\tilde g(\cdot)$ itself in an indefinite way.
 }
\end{remark}

We are ready to give a proof of Theorem \ref{T:LSI-bdd}. This proof is divided into several steps. Define the linearized operator $\mathscr L(\phi) : \Xsigz \to \Xsigz'$ of $\J'$ at an equilibrium $\phi \in \Xsigz \cap L^\infty(\Omega)$ by
$$
\mathscr L(\phi) u := \J''(\phi) u = \Asig u + g'(\phi) u
\quad \mbox{ for } \ u \in \Xsigz.
$$
Then since $g'(\phi) \in L^\infty(\Omega)$, by Fredholm alternative, one finds that the null set
$$
\mathcal N := \mathrm{Ker} (\mathscr L(\phi))
= \left\{ v \in \Xsigz \colon \mathscr L(\phi)v = 0 \right\}
$$
is finite dimensional (see, e.g.,~\cite[Theorem IX.23]{BrFA}). For latter use, let us consider the linearized problem,
\begin{equation}\label{L-eqn}
\mathscr L(\phi) u = h
\end{equation}
for some $h \in L^p(\Omega)$ and $p \geq 2$ (set $h = 0$ for $u \in \mathcal
 N$). 

\begin{propo}\label{new:P1}
Let $p \geq 2$ and let $u \in \Xsigz$ be a solution of \eqref{L-eqn} with $h \in L^p(\Omega)$. Then $u$ belongs to $\Xp$. In particular, it follows that $\mathcal N \subset \Xp$ for any $p \in [2,\infty)$.
\end{propo}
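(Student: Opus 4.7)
The plan is to bootstrap the $L^p$-integrability of $u$ by viewing \eqref{L-eqn} as the linear elliptic equation
\[
\Asig u = F, \qquad F := h - g'(\phi) u,
\]
and exploiting the fact that $F$ gains integrability at each iteration. Because $\phi \in L^\infty(\Omega)$ and $g \in C^1(\mathbb R)$ (using, if necessary, the modification of $g$ recalled in Remark \ref{R:g-modif} in cases (\nii) and (\shi)), the coefficient $g'(\phi)$ belongs to $L^\infty(\Omega)$, so that the integrability of $g'(\phi) u$ is entirely controlled by that of $u$ and $F$ inherits the worse of the integrabilities of $h$ and $u$.

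First I would exploit the Sobolev embedding for $\Xsigz$ to observe that $u \in L^{\sigma^\ast}(\Omega)$ with $\sigma^\ast := 2N/(N-2\sigma)$ when $N > 2\sigma$ (and $u \in L^q(\Omega)$ for every finite $q$ when $N \leq 2\sigma$, in which case the argument stops after a single step). Setting $q_0 := \min\{p, \sigma^\ast\}$, one then has $F \in L^{q_0}(\Omega)$.

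Next I would apply Lemma \ref{L:ellip} (specialized to the purely linear equation, i.e.\ with the trivial monotone choice $\beta \equiv 0$, applied to the equation $\Asig u = F$ with right-hand side in $L^{q_0}$) in order to deduce $u \in W^{2\sigma/q_0, q_0}(\mathbb R^N)$. The fractional Sobolev embedding then improves the integrability of $u$ to $L^{q_1}(\Omega)$ with $q_1 := N q_0 / (N - 2\sigma)$. Since $N/(N-2\sigma) > 1$, iterating this step produces a strictly increasing sequence $(q_k)$ which, after finitely many iterations, satisfies $q_k \geq p$. At that stage $g'(\phi) u \in L^p(\Omega)$ and hence $\Asig u = F \in L^p(\Omega)$; combined with the now established $u \in L^p(\Omega)$ and the already known $u \in \Xsigz$, this yields $u \in \Xp$, as required. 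The inclusion $\mathcal N \subset \Xp$ for every $p \in [2,\infty)$ is obtained by the same argument applied with $h = 0$ and an arbitrary choice of $p$.

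The main obstacle I anticipate is precisely the linear regularity step used in the iteration: as emphasized just before the statement, the $L^p$-domain of $\Asig$ cannot be identified with $W^{2\sigma,p}(\Omega) \cap \Xsigz$, so one cannot recover the "full" two fractional derivatives of $u$ at each iteration. The bootstrap must therefore be carried out on the scale $W^{2\sigma/q, q}(\mathbb R^N)$, along which the available differentiability degrades as $q$ grows; fortunately, the associated Sobolev exponent still grows by the fixed factor $N/(N-2\sigma)>1$ per step, so the iteration terminates in finitely many steps irrespective of the target exponent $p$.
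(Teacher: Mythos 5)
Your argument is correct and follows essentially the same route as the paper: there, too, one bootstraps integrability along the scale $W^{2\sigma/q,q}(\RN)$ via the test-function estimate underlying Lemma~\ref{L:ellip}, uses $g'(\phi)\in L^\infty(\Omega)$ to control the zeroth-order term, and concludes $\Asig u = h - g'(\phi)u \in L^p(\Omega)$, hence $u\in\Xp$ (and, for $h=0$, for every $p\geq 2$). The only cosmetic difference is that the paper keeps the term $g'(\phi)|u|^{p}$ on the left and absorbs it by Ehrling's lemma instead of moving $g'(\phi)u$ to the right-hand side; your explicit iteration starting from the Sobolev exponent $\sigma^\ast$ is, if anything, a more carefully spelled-out version of the step the paper compresses.
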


 \begin{proof}
As in Lemma \ref{L:ellip}, (formally) test \eqref{L-eqn} by $|u|^{p-2}u$ to see that
\begin{align*}
\omega_0 [u]^p_{W^{\frac{2\sigma}p, p}(\Omega)} 
&\leq \int_\Omega h |u|^{p-2}u ~\dx - \int_\Omega g'(\phi) |u|^p ~
 \dx\\
&\leq \|h\|_{L^p(\Omega)} \|u\|_{L^p(\Omega)}^{p-1} + C \|u\|_{L^p(\Omega)}^p\\
&\leq C \|h\|_{L^p(\Omega)}^p + C \|u\|_{L^p(\Omega)}^p
\end{align*}
for some constant $\omega_0 > 0$. 
By using the compact and continuous embedding $W^{\frac{2\sigma}p, p}(\Omega) \hookrightarrow L^p(\Omega)$
and the continuous embedding $L^p(\Omega) \hookrightarrow L^2(\Omega)$ (recall that $p\ge 2$) 
along with Ehrling's lemma, for arbitrarily small $\vep > 0$ one can take $C_\vep > 0$ such that
$$
\dfrac{\omega_0}2 [u]^p_{W^{\frac{2\sigma}p, p}(\Omega)} 
\leq C \|h\|_{L^p(\Omega)}^p + \vep \|u\|_{W^{\frac{2\sigma}p,p}(\Omega)}^p + C_\vep \|u\|_{L^2(\Omega)}^p,
$$
which together with Poincar\'e's inequality (see Proposition \ref{apdx:P:poincare}) and~\cite[\S 6]{Dine_Pala_Vald} implies
$$
u \in L^{p^*(\sigma)}(\Omega) \quad \mbox{ with } \ 
p^*(\sigma) := \dfrac{Np}{(N - 2\sigma)_+} > p.
$$
Recalling \eqref{L-eqn} along with \eqref{hp:g1} and the fact $g'(\phi) \in L^\infty(\Omega)$ by assumption, we observe that
$$
\Asig u = h - g'(\phi) u \in L^p(\Omega),
$$
which entails $u \in \Xp$ by Proposition \ref{new:P0}.
In particular, if $h = 0$, then one can carry out the argument above for any $p \in [2,\infty)$. Thus we deduce that $u \in \Xp(\Omega)$ for any $p \in [2,\infty)$.
 \end{proof}

Let $P : L^2(\Omega) \to \mathcal N$ be the projection in $L^2(\Omega)$ onto $\mathcal N$. Then we claim that
\begin{claim}\label{new:claim1}
 $\mathscr L(\phi) + P$ is a linear isomorphism {\rm (}=bijective bicontinuous mapping{\rm )} from $\Xsigz$
 to $\Xsigz'$.
\end{claim}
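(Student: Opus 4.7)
The plan is to show that $\mathscr L(\phi)+P$ is a Fredholm operator of index zero, and then establish injectivity by exploiting the self-adjoint structure; invertibility and continuity of the inverse then follow from the open mapping theorem.

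First I would verify Fredholmness. Write $\mathscr L(\phi)=\Asig+M$, where $M\colon u\mapsto g'(\phi)u$. By Proposition~\ref{P:iso}, $\Asig\colon\Xsigz\to\Xsigz'$ is an isomorphism. Since $\phi\in L^\infty(\Omega)$, the multiplier $g'(\phi)$ lies in $L^\infty(\Omega)$, so $M$ factors through the compact embedding $\Xsigz\hookrightarrow L^2(\Omega)\hookrightarrow\Xsigz'$ (Proposition~\ref{P:embed}) and is therefore a compact operator from $\Xsigz$ to $\Xsigz'$. Hence $\mathscr L(\phi)$ is Fredholm of index zero. Similarly, $P$ has finite-dimensional range $\mathcal N\subset\Xsigz$, and since $\mathcal N\subset L^2(\Omega)\hookrightarrow \Xsigz'$, the map $P\colon\Xsigz\to\Xsigz'$ is also compact. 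Adding a compact operator preserves the Fredholm index, so $\mathscr L(\phi)+P$ is Fredholm of index $0$ as well.

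Next I would establish injectivity. Observe that $\mathscr L(\phi)$ is symmetric in the $\Xsigz$–$\Xsigz'$ pairing: for all $u,v\in\Xsigz$,
\begin{equation*}
\langle\mathscr L(\phi)u,v\rangle_{\Xsigz}=\frac{C_\sigma}{2}\iint_{\RdN}\frac{(u(x)-u(y))(v(x)-v(y))}{|x-y|^{N+2\sigma}}\,\dx\,\dy+\int_\Omega g'(\phi)uv\,\dx,
\end{equation*}
which is manifestly symmetric in $u,v$. Suppose $(\mathscr L(\phi)+P)u=0$ in $\Xsigz'$. Pairing with an arbitrary $v\in\mathcal N$ yields $\langle\mathscr L(\phi)u,v\rangle_{\Xsigz}+(Pu,v)_{L^2(\Omega)}=0$. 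By symmetry of $\mathscr L(\phi)$ and $\mathscr L(\phi)v=0$, the first term vanishes, so $(Pu,v)_{L^2}=0$ for every $v\in\mathcal N$. Since $Pu\in\mathcal N$, choosing $v=Pu$ gives $\|Pu\|_{L^2}^2=0$, whence $Pu=0$. Then $\mathscr L(\phi)u=0$, so $u\in\mathcal N$, and therefore $u=Pu=0$.

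Finally, since $\mathscr L(\phi)+P$ is Fredholm of index $0$ and injective, it is surjective; being a continuous bijection between Banach spaces, the open mapping theorem gives a bounded inverse, so $\mathscr L(\phi)+P$ is an isomorphism. The only subtle point — and the mild technical obstacle — is making sure the symmetry argument is carried out with the correct pairing: one must check that the $L^2$ inner product used to define $P$ is compatible with the duality pairing $\langle\cdot,\cdot\rangle_{\Xsigz}$, which holds via the Hilbert triple $\Xsigz\hookrightarrow L^2(\Omega)\simeq L^2(\Omega)'\hookrightarrow\Xsigz'$ recalled in \eqref{H-tr}, together with the fact that $\mathcal N\subset\Xsigz$ so that the relevant pairings coincide.
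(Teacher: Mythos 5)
Your proof is correct and follows essentially the same route as the paper: both arguments reduce the problem to a compact perturbation of the isomorphism $\Asig$ (the paper conjugates by $\Asig^{-1}$ to apply the Fredholm alternative to $\Id - T$ on $\Xsigz$, while you invoke the index-zero Fredholm property directly), both prove injectivity from the symmetry of $\mathscr L(\phi)$ together with $Pu \in \mathcal N$, and both finish with the open mapping theorem. The minor variations (your pairing against all $v \in \mathcal N$ versus the paper's decomposition $u = u^0 + u^\bot$) are cosmetic.
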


\begin{proof}
We note that
\begin{align*}
 \Asig^{-1} \left( \mathscr L(\phi) + P \right)
&= \Asig^{-1} \left( \Asig + g'(\phi) + P \right)\\
&= \Id + \Asig^{-1}\left(g'(\phi) + P\right) 
: \Xsigz \to \Xsigz,
\end{align*}
where $\Id$ denotes the identity mapping in $\Xsigz$ 
and $\Asig^{-1} : \Xsigz' \to \Xsigz$ stands
for the inverse mapping of $\Asig$ (it is well defined by Proposition \ref{P:iso}. See~\cite{ASSe1}). Set
$$
T := -\Asig^{-1}\left(g'(\phi) + P\right) 
: \Xsigz \to \Xsigz.
$$
Then $T$ is bounded. We shall show that $T$ is compact in $\Xsigz$. Indeed,
let $(f_n)$ be a bounded sequence in $\Xsigz$. Then by $g'(\phi) \in L^\infty(\Omega)$,
$$
\left\| (g'(\phi) + P) f_n \right\|_{L^2(\Omega)}
\lesssim \|f_n\|_{L^2(\Omega)} \lesssim \|f_n\|_{\Xsigz} \leq C,
$$
which implies that $(g'(\phi) + P) f_n$ is precompact in $\Xsigz'$ (see Proposition \ref{P:embed}). Hence since $\Asig^{-1}$ is an isomorphism from $\Xsigz'$ to $\Xsigz$ (see Proposition \ref{P:iso}), we find that
$u_n := Tf_n = - \Asig^{-1} ((g'(\phi) + P) f_n)$ is precompact in
$\Xsigz$. Thus $T$ is compact in $\Xsigz$.

By the Fredholm alternative, we also observe that
\begin{equation}\label{Fredholm}
\mathrm{Ker}(\Id-T) = \{0\} \quad \Leftrightarrow \quad
\mathrm{Rg}(\Id-T) = \Xsigz.
\end{equation}
So we shall prove that
$\mathrm{Ker}(\Id-T) = \{0\}$. Let $u \in D(T) = \Xsigz$ satisfy
\begin{equation}\label{resol}
(\Id - T) u = 0, \quad \mbox{ i.e., } \ \mathscr L(\phi)u+Pu=0. 
\end{equation}
 Decompose the above $u \in \Xsigz \subset L^2(\Omega)$ as $u = u^0 + u^\bot$ for $u^0 \in
\mathcal N$ and $u^\bot \in \mathcal N^\bot$. Then it follows from \eqref{resol} that
\begin{equation}\label{Lu0}
\mathscr L(\phi) u^\bot + u^0 = 0.
\end{equation}
Test it by $u^0$ to get
$$
\left( \mathscr L(\phi) u^\bot, u^0 \right)_{L^2(\Omega)} + \|u^0\|_{L^2(\Omega)}^2 = 0,
$$
which together with the symmetry of $\mathscr L(\phi)$ gives
$\|u^0\|_{L^2(\Omega)} = 0$. Hence by \eqref{Lu0} $u^\bot$ belongs to $\mathcal N$. Due to
the fact that $u^\bot \in \mathcal N^\bot$, we deduce that $u^\bot = 0$.
Thus $u = 0$, and therefore, $\mathrm{Ker}(\Id-T) = \{0\}$. Furthermore, \eqref{Fredholm} implies the surjectivity of $\Id - T$. The continuity of $(\Id - T)^{-1}$ follows 
from the continuity (boundedness) of $\Id-T$ as well as Open Mapping Theorem.

Since $\Asig : \Xsigz \to \Xsigz'$ is an
isomorphism, we conclude that $\mathscr L(\phi) + P = \Asig (\Id-T)$
is also an isomorphism.  Thus the claim has been proved.
\end{proof}

We next claim that

\begin{claim}\label{new:claim2}
For $p \geq 2$,
 $\mathscr L(\phi) + P$ is a linear isomorphism from $\Xp$ to $L^p(\Omega)$.
\end{claim}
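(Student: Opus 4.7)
The plan is to combine Claim \ref{new:claim1}, which gives the isomorphism at the level of $\Xsigz \to \Xsigz'$, with the regularity result of Proposition \ref{new:P1} to upgrade the conclusion to the $L^p$-framework. Schematically the steps are: well-definedness and continuity of $\mathscr L(\phi)+P$ as a map $\Xp \to L^p(\Omega)$; injectivity via Claim \ref{new:claim1}; surjectivity via a bootstrap using Proposition \ref{new:P1}; and bicontinuity via the Open Mapping Theorem.

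First I would verify that $\mathscr L(\phi)+P$ sends $\Xp$ continuously into $L^p(\Omega)$. For $u\in\Xp$, the term $\Asig u$ lies in $L^p(\Omega)$ by definition of $\Xp$; since $\phi\in L^\infty(\Omega)$ and $g\in C^1(\R)$, we have $g'(\phi)\in L^\infty(\Omega)$, hence $g'(\phi)u\in L^p(\Omega)$ with norm controlled by $\|u\|_{L^p(\Omega)}\le \|u\|_{\Xp}$. Finally $Pu\in\mathcal N$, which by Proposition \ref{new:P1} is contained in $\Xp$ and hence in $L^p(\Omega)$; since $\mathcal N$ is finite-dimensional, all norms on it are equivalent, and $P$ is bounded as a map $\Xp \to \mathcal N \subset L^p(\Omega)$. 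Injectivity is then immediate: any $u\in\Xp\subset\Xsigz$ satisfying $(\mathscr L(\phi)+P)u=0$ must vanish by Claim \ref{new:claim1}.

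The crucial step is surjectivity. Given $h\in L^p(\Omega)$, the embedding $L^p(\Omega)\hookrightarrow L^2(\Omega)\simeq L^2(\Omega)'\hookrightarrow\Xsigz'$, which is valid since $p\geq 2$, lets me apply Claim \ref{new:claim1} to produce $u\in\Xsigz$ with $(\mathscr L(\phi)+P)u=h$. To show $u\in\Xp$, I would rewrite the equation as $\mathscr L(\phi)u=h-Pu$. Since $Pu\in\mathcal N\subset\Xp\subset L^p(\Omega)$ by Proposition \ref{new:P1}, the right-hand side belongs to $L^p(\Omega)$, so one further application of Proposition \ref{new:P1} yields $u\in\Xp$. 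Bicontinuity then follows from the Open Mapping Theorem, since by Proposition \ref{new:P0}(i) $\Xp$ is a Banach space (in fact uniformly convex) and $L^p(\Omega)$ is Banach, and we have shown that $\mathscr L(\phi)+P$ is a continuous bijection between them.

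The main obstacle is really the surjectivity step, but thanks to Proposition \ref{new:P1} the $L^p$-regularity of solutions of the linearized problem is already at our disposal, so this step reduces to a two-line application. The only subtlety to keep track of is that the projector $P$ is defined via the $L^2$ scalar product, so one must verify that its restriction to $\Xp$ is bounded into $L^p(\Omega)$; this is transparent once one knows that $\mathcal N\subset\Xp$ and $\dim\mathcal N<\infty$.
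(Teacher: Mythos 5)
Your proof is correct and follows essentially the same route as the paper: well-definedness and continuity from $\mathcal N\subset\Xp$ and $g'(\phi)\in L^\infty(\Omega)$, injectivity from Claim \ref{new:claim1}, surjectivity by solving in $\Xsigz$ and then upgrading the solution to $\Xp$ via Proposition \ref{new:P1}, and bicontinuity via the Open Mapping Theorem. The only difference is organizational: the paper first decomposes $h=h_1+h_2$ along $\mathcal N^\bot\oplus\mathcal N$ and solves $\mathscr L(\phi)u_1=h_1$ separately (with $Pu_1=0$ checked in the appendix), whereas you apply Claim \ref{new:claim1} directly to $h$ and then note that $\mathscr L(\phi)u=h-Pu$ has right-hand side in $L^p(\Omega)$ because $Pu\in\mathcal N\subset\Xp$; your version is slightly more streamlined and equally valid.
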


\begin{proof}
 It is sufficient to prove the surjectivity; indeed, the injectivity follows from Claim \ref{new:claim1}. For $h \in L^p(\Omega) \subset L^2(\Omega)$, one can decompose $h$ as $h = h_1 + h_2$ for some $h_1 \in \mathcal N^\bot$ and $h_2 \in \mathcal N$. Then since $h_1 \in \mathcal N^{\bot}\cap L^p(\Omega)$ (note that $h_2 \in \mathcal N \subset \Xp \subset L^p(\Omega)$ by Claim \ref{new:P1}), one can take $u_1 \in \mathcal N^\bot \cap \Xp$ such
 that
 \begin{equation}\label{Lu1=h1}
  \mathscr L(\phi) u_1 = h_1
 \end{equation}
 (see Appendix \S \ref{apdx:ss:Lu1}). Hence
\begin{align*}
 h = h_1 + h_2 
 &= \mathscr L(\phi) u_1 + Ph_2\\
 &= \mathscr L(\phi) (u_1+h_2) + P(u_1 + h_2)
 = (\mathscr L(\phi) + P)(u)
\end{align*}
for $u := u_1 + h_2 \in \Xp$. Thus $\mathscr L(\phi) + P$ is surjective from $\Xp$ to $L^p(\Omega)$.

For any $u \in \Xp$, it holds that
$$
\|\mathscr L(\phi) u + Pu\|_{L^p(\Omega)}
 \leq \|\mathscr L(\phi) u\|_{L^p(\Omega)} + \|Pu\|_{L^p(\Omega)}.
$$
Since $\dim \mathcal N$ is finite, we have
$$
\|Pu\|_{L^p(\Omega)} \lesssim \|Pu\|_{L^2(\Omega)}
\leq \|u\|_{L^2(\Omega)} \lesssim \|u\|_{L^p(\Omega)}
 \quad \mbox{ for all } \ u \in \Xp.
 $$
Here we used equivalence of (arbitrary) norms in finite dimensional spaces, boundedness of $P$ and H\"older's inequality. Moreover, it follows that
$$
 \|\mathscr L(\phi) u\|_{L^p(\Omega)}
 \leq \|\Asig u\|_{L^p(\Omega)} + \|g'(\phi)u\|_{L^p(\Omega)}\\
 \leq \|u\|_{\Xp} + C\|u\|_{L^p(\Omega)}
 $$
 for all $u \in \Xp$. Thus $\mathscr L(\phi) + P$ is bounded linear from $\Xp$ to $L^p(\Omega)$. 
 By the Open Mapping Theorem, $(\mathscr L(\phi)+P)^{-1} : L^p(\Omega) \to \Xp$ is also bounded.
\end{proof}

\begin{propo}\label{new:Panal}
 For $p > \frac{N}{\sigma}$, the operator $\J' + P : \Xp \to L^p$ is analytic in a neighborhood of $\phi$ in $\Xp$.
\end{propo}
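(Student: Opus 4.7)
My plan is to decompose $\J'(v) + Pv = \Asig v + g(v(\cdot)) + Pv$ and observe that the two linear pieces are analytic for trivial reasons. The map $\Asig \colon \Xp \to L^p(\Omega)$ is bounded directly from the definition of the $\Xp$-norm, and $P \colon \Xp \to L^p(\Omega)$ is bounded since $\mathcal N$ is finite-dimensional and contained in $\Xp$ (Proposition \ref{new:P1}); its action factors as $\Xp \hookrightarrow L^2(\Omega) \to \mathcal N \hookrightarrow L^p(\Omega)$, where the last inclusion is continuous because all norms are equivalent on $\mathcal N$. Being linear and continuous, both maps are in particular analytic. The substantive task is thus to establish analyticity at $\phi$ of the Nemytskii operator $G' \colon v \mapsto g(v(\cdot))$ from $\Xp$ to $L^p(\Omega)$.

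The pivotal tool is the embedding $\Xp \hookrightarrow C^\sigma(\overline\Omega) \hookrightarrow L^\infty(\Omega)$, which is Proposition \ref{new:P0}(ii) applied with $p > N/\sigma$ (so that the H\"older exponent $\beta = \sigma \wedge (2\sigma - N/p)$ equals $\sigma$); call $c_\infty$ the associated embedding constant. For $z$ in a small $\Xp$-ball around $\phi$, the function $z$ takes values in a fixed compact interval $I \subset \R$. I would introduce the candidate $n$-linear forms $T_n(z) \colon \Xp^n \to L^p(\Omega)$ defined pointwise by
$$[T_n(z)](h_1,\dots,h_n)(x) := \frac{g^{(n)}(z(x))}{n!}\, h_1(x)\cdots h_n(x),$$
verify that they are the symmetric Fr\'echet derivatives of $G'$ at $z$, and then check conditions \eqref{analy-1}--\eqref{analy-2}. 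In the uniformly analytic case, i.e.\ under hypothesis (H1) (directly in case (i) of Theorem \ref{T:LSI-bdd}, or after the truncation of Remark \ref{R:g-modif} in case (ii)), the bound $|g^{(n)}(s)| \leq C M^n n!$ on $I$ combines with
$$\|[T_n(z)](h_1,\dots,h_n)\|_{L^p(\Omega)} \leq |\Omega|^{1/p} C M^n \prod_{i=1}^n \|h_i\|_{L^\infty(\Omega)} \leq |\Omega|^{1/p} C (M c_\infty)^n \prod_{i=1}^n \|h_i\|_{\Xp}$$
to yield $\|T_n(z)\|_{\mathcal L^n(\Xp, L^p(\Omega))} \lesssim (Mc_\infty)^n$. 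The series $\sum_{n \geq 1} [T_n(z)](h,\dots,h)$ then converges for $\|h\|_{\Xp} < 1/(Mc_\infty)$, uniformly in $z$ close to $\phi$, and matches $G'(z+h) - G'(z)$ via pointwise Taylor expansion of $g$ around $z(x)$ followed by dominated convergence in $L^p(\Omega)$.

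The main obstacle lies in the singular case ((iii)/(iv) of Theorem \ref{T:LSI-bdd}, under hypothesis (H2)), where only $|g^{(n)}(s)| \leq CM^n n!/|s|^n$ is available for $s > 0$: since $\phi \in C^\sigma(\overline\Omega) \cap \Xsigz$ vanishes on $\partial\Omega$, an $L^\infty$-bound alone cannot control the factor $|s|^{-n}$. To overcome this I would invoke a fractional Hopf-type lower bound $\phi(x) \gtrsim d(x,\partial\Omega)^\sigma$ for positive solutions of \eqref{elliptic-f} (in the spirit of \cite{RoSe12,RoSe13}), combined with a perturbation argument ensuring $v \geq \tfrac12 \phi$ pointwise for every $v$ in a sufficiently small $\Xp$-neighborhood of $\phi$. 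The multilinear estimate is then recast on a weighted version of $\Xp$ that absorbs the boundary singularity of $\phi^{-1}$, after which the same Taylor scheme as in the regular case closes the argument. This weighting step is the only delicate point; everything else is a direct consequence of the uniform Taylor expansion combined with the $\Xp \hookrightarrow L^\infty$ embedding.
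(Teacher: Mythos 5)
Your decomposition and your treatment of the linear parts and of the uniformly analytic case (H1) coincide with the paper's own proof: same candidate multilinear forms $T_n(z)$ defined pointwise from $g^{(n)}(z(x))/n!$, same use of the embedding $\Xp\hookrightarrow L^\infty(\Omega)$, same geometric bound on $\|T_n(z)\|_{\mathcal L^n(\Xp,L^p(\Omega))}$. The gap is in the singular case (H2). You correctly identify the fractional Hopf lower bound $\phi(x)\geq C_0\,\mathrm{dist}(x,\partial\Omega)^\sigma$ and the need for a perturbation step guaranteeing $v+h\gtrsim \mathrm{dist}(\cdot,\partial\Omega)^\sigma$ on a small $\Xp$-ball, but you then defer the decisive estimate to an unspecified ``weighted version of $\Xp$ that absorbs the boundary singularity of $\phi^{-1}$,'' which you flag as the only delicate point without carrying it out. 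As written, the argument does not close there: the bound $|g^{(n)}(v(x))|\leq CM^n n!/v(x)^n$ forces you to control $|h_j(x)|/v(x)$ pointwise by $\|h_j\|_{\Xp}$, and nothing in your write-up produces such a control.

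The missing observation --- and the reason the hypothesis is $p>N/\sigma$ rather than the $p>N/(2\sigma)$ that suffices for your (H1) argument --- is that no new weighted space is needed. Since $p>N/\sigma$, Proposition \ref{new:P0}(ii) gives $\Xp\hookrightarrow C^\sigma(\overline\Omega)$ with H\"older exponent exactly $\sigma$, and every $h\in\Xp$ vanishes identically outside $\Omega$; hence, taking $x_0$ a nearest boundary point,
$$|h(x)|=|h(x)-h(x_0)|\leq \|h\|_{C^\sigma(\overline\Omega)}\,\mathrm{dist}(x,\partial\Omega)^\sigma\leq \widetilde C_{\sigma,p}\,\|h\|_{\Xp}\,\mathrm{dist}(x,\partial\Omega)^\sigma.$$
Combined with $v(x)+h(x)\geq\eta\,\mathrm{dist}(x,\partial\Omega)^\sigma$ (obtained exactly by your perturbation step, applied to $\phi-v-h$, which also lies in $C^\sigma(\overline\Omega)$ and vanishes on $\partial\Omega$), this yields $|h_j(x)|/v(x)\leq \widetilde C_{\sigma,p}\|h_j\|_{\Xp}/\eta$ uniformly in $x$, whence $\|T_n(v)\|_{\mathcal L^n(\Xp,L^p(\Omega))}\leq C\,(M\widetilde C_{\sigma,p}/\eta)^n$ and the Taylor scheme closes exactly as in the regular case. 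Supplying this one estimate makes the ``weighting'' superfluous and turns your outline into the paper's proof.
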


\begin{proof}
Let us start with calculating the derivative of the map $\Asig : \Xsigz \to \Xsigz'$,
$$
\left\langle \Asig'(v) e, w \right\rangle_{\Xsigz}
= \dfrac{C_\sigma} 2 \iint_{\mathbb R^{2N}}
\dfrac{(e(x)-e(y))(w(x)-w(y))}{|x-y|^{N+2\sigma}} ~\dx \, \dy
= \left\langle \Asig e, w \right\rangle_{\Xsigz}
$$
for any $e,v,w \in \Xsigz$. Therefore $\Asig^{(n)} \equiv 0$ for $n \geq 2$, and particularly, $\Asig$ is analytic in $\Xsigz$. Indeed, one observes that
$$
 \left\langle \Asig(u+e),w \right\rangle_{\Xsigz}
 = \left\langle \Asig u, w \right\rangle_{\Xsigz}
 + \left\langle \Asig e, w \right\rangle_{\Xsigz}
 = \left\langle \Asig u, w \right\rangle _{\Xsigz}
 +  \left\langle \Asig'(u)e, w \right\rangle_{\Xsigz}
$$
 for any $u,e,w \in \Xsigz$. Hence $\Asig(u+e) = \Asig u + \Asig'(u)e$ in $\Xsigz'$ for $u,e \in \Xsigz$. In a similar way, one can also prove that $P : L^2(\Omega) \to \mathcal N$ is analytic in $L^2(\Omega)$, and moreover, $P(u+e) = Pu + P'(u)e = Pu + Pe$ and $P^{(n)} \equiv 0$ for $n \geq 2$. By virtue of the embeddings $\Xp \subset \Xsigz \subset L^2(\Omega)$, one can check the analyticity (in $\Xp$) of the restrictions of $\Asig$ and $P$ onto $\Xp$. Indeed, we have, for $v, h \in \Xp$,
 $$
 \|P'(v)h\|_{L^p(\Omega)} = \|Ph\|_{L^p(\Omega)} \lesssim \|Ph\|_{L^2(\Omega)}
 \leq \|h\|_{L^2(\Omega)} \lesssim \|h\|_{\Xp}
 $$
 and
 $$
 \|\Asig'(v)h\|_{L^p(\Omega)}
 = \|\Asig h\|_{L^p(\Omega)} \leq \|h\|_{\Xp},
 $$
 whence we deduce that $P'(v), \Asig'(v) \in \mathcal L(\Xp, L^p(\Omega))$. Moreover, we recall that $P^{(n)} \equiv 0$ and $\Asig^{(n)} \equiv 0$ for $n \geq 2$. Thus we infer that the mapping $u \mapsto \Asig u + Pu$ from $\Xp$ to $L^p(\Omega)$ is analytic (in $\Xp$) (see \S \ref{Ss:analytic}). So it remains to prove the analyticity of the map $u \mapsto g(u)$ from $\Xp$ to $L^p(\Omega)$.

In the case of (\ichi), let $v \in \Xp$ be fixed and let $h \in \Xp$ be such that $\|h\|_{\Xp} \leq r$ for $r > 0$. Then recalling the embedding $\Xp \hookrightarrow L^\infty(\RN)$ by $p > N/(2\sigma)$ (see Proposition \ref{new:P0}), we note that
\begin{equation}\label{h-leq-r}
 \|h\|_{L^\infty(\RN)} \leq C_{p,\sigma} \|h\|_{\Xp} \leq C_{p,\sigma} r.
\end{equation}
 We choose $r > 0$ such that $M C_{p,\sigma} r < 1$,
 where $M$ is the constant appearing in (H1). Hence by Remark \ref{R:analytic} and $C_{p,\sigma} r < M^{-1}$, we deduce that, for every $x \in \Omega$,
 $$
 g(v(x)+h(x)) = g(v(x)) + \sum_{n=1}^\infty \dfrac{g^{(n)}(v(x))}{n!}h(x)^n,
 $$
 where the series of the right-hand side is convergent uniformly in $\Omega$. Let $T : v \mapsto g(v(\cdot))$ be a mapping from $\Xp$ to $L^p(\Omega)$ and set
 \begin{equation}\label{def-Tn}
 T_n(v) [h_1,\ldots,h_n] := \dfrac{g^{(n)}(v(x))}{n!}h_1(x) \cdots h_n(x)
\end{equation}
 for $v,h_1,\ldots,h_n \in \Xp$. Then by (H1) with $a = b = \infty$, we derive that
 \begin{align*}
  \|T_n(v)\|_{\mathcal L^n(\Xp,L^p(\Omega))}
  &= \sup_{\|h_j\|_{\Xp} = 1} \left\| \dfrac{g^{(n)}(v(x))}{n!}h_1(x) \cdots h_n(x) \right\|_{L^p(\Omega)}\\
  &\leq C M^n  \sup_{\|h_j\|_{\Xp} = 1} \left\| h_1(x) \cdots h_n(x) \right\|_{L^p(\Omega)}\\
  &\leq C M^n |\Omega|^{1/p}  \sup_{\|h_j\|_{\Xp} = 1} \| h_1(x) \|_{L^\infty(\Omega)} \cdots \|h_n(x)\|_{L^\infty(\Omega)}\\
  &\leq C M^n |\Omega|^{1/p} C_{p,\sigma}^n.
 \end{align*}
Thus we have
 $$
 \sup_{n \in \N} \|T_n(v)\|_{\mathcal L^n(\Xp,L^p(\Omega))} r^n \leq C |\Omega|^{1/p} \sup_{n \in \N} (M C_{p,\sigma} r)^n < \infty,
 $$
 due to $0 < M C_{p,\sigma} r < 1$. Thus $T : \Xp \to L^p(\Omega)$ turns out to be analytic in $\Xp$, and therefore, so is $\J' + P$.

 In the case of (\san), we let $p > N/\sigma$ and take $v$ from an $\vep$-neighbourhood of $\phi$ in $\Xp$ (i.e., $\|\phi-u\|_{\Xp} < \vep$). 
 Moreover, let $h \in \Xp$ be such that $\|h\|_{\Xp} < r$. Then, the positive bounded equilibrium $\phi(x)$ satisfies
 \begin{equation}\label{bdry-Hoelder}
  \phi(x) \geq C_0 \mathrm{dist}(x,\partial \Omega)^\sigma \ \mbox{ for all  } \ x \in \overline\Omega
 \end{equation}
 for some $C_0 > 0$. Indeed, by~\cite[Theorem 1.2]{RoSe12} along with the fact that $g(\phi) \in L^\infty(\Omega)$ (see~\eqref{hp:g1}), we assure that
 $\phi(x)/\mathrm{dist}(x,\partial \Omega)^\sigma$ is continuously extended onto $\overline\Omega$ (and it is of class $C^\beta$ over $\overline\Omega$
 for some $0 < \beta < \min\{\sigma,1-\sigma\}$). On the other hand, since $\phi$ is positive in $\Omega$, it follows that
 $\phi(x)/\mathrm{dist}(x,\partial \Omega)^\sigma > 0$ for $x \in \Omega$. Moreover, we claim that $\phi(x)/\mathrm{dist}(x,\partial \Omega)^\sigma$ 
 is also positive for all $x \in \partial \Omega$. Indeed, we can rewrite \eqref{elliptic-f} as
 $$
 \Dsig \phi = c \phi
 $$
 with $c := -g(\phi)/\phi$. Then $c$ belongs to $L^\infty(\Omega)$, since $s \mapsto g(s)/s$ is continuous in $(0,\infty)$ and has a finite limit as $s \to 0_+$ by \eqref{hp:g1}. 
 Thus we can apply the fractional Hopf lemma (see~\cite[Lemma 1.2]{GrSe} and also Proposition \ref{P:Hopf} in Appendix \S \ref{S:hopf}) and verify 
 the positivity of $\phi(x)/\mathrm{dist}(x,\partial \Omega)^\sigma$ over $\partial \Omega$. Combining all these facts, we obtain \eqref{bdry-Hoelder}.
Hence by (ii) of Proposition \ref{new:P0} together with $p > N/\sigma$,
 \begin{align}
  v(x) + h(x) &\geq C_0 \mathrm{dist}(x, \partial \Omega)^\sigma - \|\phi - v - h\|_{C^\sigma (\overline{\Omega})} \mathrm{dist}(x, \partial \Omega)^\sigma\nonumber\\
  &\geq [C_0 - C ( \|\phi - v\|_{\Xp} + \|h\|_{\Xp})] \mathrm{dist}(x,\partial \Omega)^\sigma\nonumber\\
  &\geq [C_0 - C (\vep + r)] \mathrm{dist}(x,\partial \Omega)^\sigma \nonumber \\
  &=: \eta \,\mathrm{dist}(x,\partial \Omega)^\sigma > 0
  \quad \mbox{ for all } \ x \in \Omega,\label{v-Hopf}
 \end{align}
 provided that $\vep + r$ is small enough so that $\eta := C_0 - C(\vep + r) > 0$.

 Due to (H2) (with $b = \infty$) and \eqref{v-Hopf} (with $h \equiv 0$), we observe that
\begin{align*}
\left|
 \dfrac{g^{(n)}(v(x))}{n!} h_1(x) \cdots h_n(x)
\right|
&\leq 
 C M^n \left|\dfrac{h_1(x) \cdots h_n(x)}{v(x)^n} \right|\\
 &\leq 
 C M^n \widetilde C_{\sigma,p}^n \left|\dfrac{\|h_1\|_{\Xp} \cdots \|h_n\|_{\Xp}}{\eta^n} \right| \ \mbox{ for a.e. } \ x \in \Omega
\end{align*}
 for any $n \in \mathbb N$ and $h_j \in \Xp$ ($j = 1,2,\ldots,n$). Here we used the fact that
 $$
 |h_j(x)| \leq \|h_j\|_{C^\sigma(\overline\Omega)} \mathrm{dist}(x,\partial \Omega)^\sigma
 \leq \widetilde C_{\sigma,p} \|h_j\|_{\Xp} \mathrm{dist}(x,\partial \Omega)^\sigma
 $$
 for some constant $\widetilde C_{\sigma,p} > 0$ (see Proposition \ref{new:P0}). This implies that
 $$
   \|T_n(v)\|_{\mathcal L^n(\Xp,L^p(\Omega))} \leq C \dfrac{M^n}{\eta^n}  {\widetilde C_{\sigma,p}}^n,
 $$
 whence follows
 $$
 \sup_{n \in \N} \|T_n(v)\|_{\mathcal L^n(\Xp,L^p(\Omega))}  r^n < \infty,
 $$
 if $M r \widetilde C_{\sigma,p}/\eta < 1$. Moreover,
 $$
 g(v(x)+ h(x)) = g(v(x)) + \sum_{n = 1}^\infty \dfrac{g^{(n)}(v(x))}{n!} h(x)^n \quad \mbox{ for a.e. } x \in \Omega
 $$
 is uniformly convergent over $\Omega$, provided that $Mr\widetilde C_{\sigma,p}/\eta < 1$.
 Therefore $T : \Xp \to L^p(\Omega)$ is analytic at $v$, and hence, so is $T$ in the $\vep$-neighbourhood of $\phi$ in $\Xp$.
 
 In the case of (\nii), let $p > N/(2\sigma)$ and take $v$ from an $\vep$-neighbourhood of $\phi$ in $\Xp$ (i.e., $\|\phi - v\|_{\Xp} < \vep$) for $\vep > 0$ small enough. 
 Exploiting the embedding $\Xp \hookrightarrow L^\infty(\R^N)$ (by $p > N/(2\sigma)$) and choosing $\vep > 0$ small enough, by $\|\phi\|_{L^\infty(\Omega)} < \gamma < a \wedge b$, one observes that 
 $$
 \|v\|_{L^\infty(\Omega)} \leq \|\phi\|_{L^\infty(\Omega)} + C_{p,\sigma}\vep
 < a \wedge b.
 $$
We next let $h \in \Xp$ be such that $\|h\|_{\Xp} < r$ and take $r > 0$ small enough so that
$$
\|v + h\|_{L^\infty(\Omega)} < a \wedge b, \quad \|h\|_{L^\infty(\Omega)} < M^{-1} \quad \mbox{ and } \quad \|h\|_{\Xp} < M^{-1}.
 $$
Then (H1) implies
 $$
g(v(x)+ h(x)) = g(v(x)) + \sum_{n = 1}^\infty
 \dfrac{g^{(n)}(v(x))}{n!} h(x)^n
\quad \mbox{ for a.e. } x \in \Omega
 $$
 uniformly over $\Omega$ (see (i) of Remark \ref{R:analytic}). Repeating the same argument as in (\ichi), we conclude that $T$ is analytic at $v$; hence $T$ is analytic in the $\vep$-neighbourhood of $\phi$ in $\Xp$. So is $\J + P$.

In the case of (\shi), we take $v$ and $h$ and choose $\vep$ and $r$ small enough as in (\san). Then, one can also check that
 $$
 v(x) + h(x) < b \quad \mbox{ for all } \ x \in \Omega
 $$
 by taking $\vep > 0$ small enough. Repeating a similar argument to those of (\san) and (\nii), one can verify that $T : \Xp \to L^p(\Omega)$ is analytic at $v$, and hence, so is $T$ in the $\vep$-neighbourhood of $\phi$ in $\Xp$.
 \end{proof}

 The rest of proof runs as in~\cite{FeiSim00} (see also~\cite{Simon83}). However, for the convenience of the reader, we give a complete proof. Since $\J' + P : \Xsigz \to \Xsigz'$ is of class $C^1$, by Claim \ref{new:claim1}, one can apply a $C^1$ inverse function theorem to $\J' + P$ and ensure the existence of an inverse mapping,
 $$
 B = (\J'+P)^{-1} : U^* \to V^*
 $$
 of class $C^1$ from a neighborhood $U^*$ of $(\J'+P)(\phi) = P\phi$ in $\Xsigz'$ to a neighborhood $V^*$ of $\phi$ in $\Xsigz$. Furthermore, by the analytic inverse function theorem, since the map $\J' + P : \Xp \to L^p(\Omega)$ is analytic (at least in a small neighbourhood of $\phi$) for $p$ large enough, one can take a neighborhood $U_p \subset U^*$ of $P\phi$ in $L^p(\Omega)$ and a neighborhood $V_p \subset V^*$ of $\phi$ in $\Xp$ such that 
$$
 B = (\J'+P)^{-1} : U_p \to V_p \ \mbox{ is analytic in } U_p.
$$
 
Define a function $H: \mathcal N \cap U_p \to \mathbb R$ by
\begin{equation}\label{H}
H(u):= \J \circ B|_{\mathcal N}(u) \quad \mbox{ for } \ u \in \mathcal N
\cap U_p.
\end{equation}
Here we used a proper identification $\mathcal N \subset L^2(\Omega) \subset \Xsigz'$. From the analyticity of $\J$ on $\Xp$ (see Appendix \S \ref{apdx:s:analytic}), we deduce that $H$ is also analytic on $\mathcal N \cap U_p$. Let us observe that for $u \in \mathcal N \cap U_p$ and $v \in \mathcal
N ~ (\subset \Xsigz')$,
\begin{equation}\label{dG}
\langle H'(u), v \rangle_{\mathcal N} = \left\langle \J'(Bu), B'(u) v
\right\rangle_{\Xsigz}, 
\end{equation}
where $H'$ and $B'$ denote the Fr\'echet derivatives (i.e., gradients) of $H$ and $B$,
respectively (note that $B'$ maps from $U^*$ to $\mathcal
L (\Xsigz', \Xsigz)$; hence $B'(u) \in \mathcal
L (\Xsigz', \Xsigz)$ for $u \in U^p \subset U^*$). In
particular, substitute $u = P \phi \in \mathcal N \cap U_p$. Then since $\J'(\phi) = 0$, it follows that
\begin{align*}
\langle H'(P\phi), v \rangle_{\mathcal N} 
&= \left\langle \J'(B \circ P \phi), B'(P \phi) v
\right\rangle_{\Xsigz}\\
&= \left\langle \J'(B \circ (\J' + P) \phi), B'(P \phi) v
\right\rangle_{\Xsigz}\\
&= \left\langle \J'(\phi), B'(P \phi) v
\right\rangle_{\Xsigz} = 0
\quad \mbox{ for all } \ v \in \mathcal N,
\end{align*}
whence follows $H'(P\phi) = 0$ in $\mathcal N'$. Since $\mathcal N$ is
finite dimensional, one can apply the classical \L ojasiewicz inequality (see Proposition \ref{P:L}) to $H$ and obtain the following: there exist constants $\delta_0, C > 0$, $\theta \in (0,1/2]$ such that for all $n \in \mathcal N$,
\begin{equation}\label{L}
|H(n) - H(P\phi)|^{1-\theta} \leq C\|H'(n)\|_{\mathcal N'}
\end{equation}
whenever $\|n-P\phi\|_{\mathcal N} < \delta_0$ (it also implies $n \in
U_p$ by taking $\delta_0>0$ small enough by $\dim \mathcal N < \infty$). Here we also note that
\begin{equation}\label{HPphi}
H(P\phi) = \J(B \circ P\phi) = \J(\phi).
\end{equation}

Now, let $u \in \Xsigz$ satisfy
\begin{equation}\label{u-phi}
\|u - \phi\|_{\Xsigz} < \delta,
\end{equation}
for $\delta > 0$. Then, it holds that
\begin{equation}\label{PuPphi}
\|Pu - P\phi\|_{\mathcal N} \lesssim \|Pu-P\phi\|_{L^2(\Omega)} \leq
\|u-\phi\|_{L^2(\Omega)} \lesssim \|u-\phi\|_{\Xsigz} < \delta.
\end{equation}
So taking $\delta > 0$ small enough and recalling \eqref{H} and \eqref{L} with $n$ replaced by $Pu$, one finds by \eqref{H} and \eqref{HPphi} that
\begin{equation}\label{eq2}
\left|\J(B\circ Pu)-\J(\phi)\right|^{1-\theta}\leq C \|H'(Pu)\|_{{\mathcal N'}},
\end{equation}
whenever $\|u-\phi\|_{\Xsigz} < \delta$. Then we claim that
 \begin{claim}
Let $\delta > 0$ be small enough. There exists a constant $C \geq 0$ such that
\begin{equation}\label{ls1}
\|H'(Pu)\|_{\mathcal N'} \leq C \|\J'(u)\|_{\Xsigz'}
\end{equation}
for all $u \in \Xsigz$ satisfying $\|u-\phi\|_{\Xsigz} < \delta$. 
 \end{claim}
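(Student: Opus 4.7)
The plan is to combine the chain-rule formula \eqref{dG} at $n = Pu$ with two Lipschitz estimates: one for $B = (\J' + P)^{-1}$, coming from its $C^1$ regularity near $P\phi$ (Claim \ref{new:claim1} plus the $C^1$ inverse function theorem), and one for $\J'$, coming from the $C^2$ regularity of $\J$ on $\Xsigz$. First I would fix $\delta>0$ so small that whenever $\|u-\phi\|_{\Xsigz}<\delta$, both $Pu$ and $(\J'+P)u = Pu + \J'(u)$ lie in the neighbourhood $U^*$ of $P\phi$ on which $B$ is well defined and $C^1$. This is possible because $Pu\to P\phi$ in $\Xsigz'$ as $u\to\phi$ (cf.\ \eqref{PuPphi}), and because $\J'(u)\to \J'(\phi)=0$ in $\Xsigz'$ by continuity of $\J'$. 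On this neighbourhood I may fix a constant $C_B$ with $\|B'(\,\cdot\,)\|_{\mathcal L(\Xsigz',\Xsigz)}\le C_B$.

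Next, given $v\in\mathcal N$, I would use \eqref{dG} and insert $\pm\J'(u)$ to split
\begin{equation*}
 \langle H'(Pu), v \rangle_{\mathcal N}
  = \langle \J'(B(Pu)) - \J'(u),\, B'(Pu)v \rangle_{\Xsigz}
   + \langle \J'(u),\, B'(Pu)v \rangle_{\Xsigz}.
\end{equation*}
The second summand is immediately controlled by $C_B\,\|\J'(u)\|_{\Xsigz'}\|v\|_{\Xsigz'}$. For the first one, the key observation is the identity $B((\J'+P)u)=u$, which yields
\begin{equation*}
 B(Pu)-u \;=\; B(Pu) - B\bigl(Pu+\J'(u)\bigr),
\end{equation*}
so the local Lipschitz continuity of $B$ on $U^*$ gives $\|B(Pu)-u\|_{\Xsigz}\le C\|\J'(u)\|_{\Xsigz'}$. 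Since $\J\in C^2(\Xsigz)$, its gradient $\J'$ is locally Lipschitz, and (shrinking $\delta$ if necessary so that $B(Pu)$ stays in a fixed $\Xsigz$-ball around $\phi$) one obtains
\begin{equation*}
 \|\J'(B(Pu)) - \J'(u)\|_{\Xsigz'} \;\le\; C\,\|B(Pu)-u\|_{\Xsigz} \;\le\; C\,\|\J'(u)\|_{\Xsigz'},
\end{equation*}
so that the first summand is also bounded by $C\,\|\J'(u)\|_{\Xsigz'}\|v\|_{\Xsigz'}$.

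To finish, I would use that $\mathcal N$ is finite-dimensional and, via the Hilbert triple \eqref{H-tr}, continuously embedded in $L^2(\Omega)$ and hence in $\Xsigz'$. All norms on $\mathcal N$ are therefore equivalent, and in particular $\|v\|_{\Xsigz'}\lesssim\|v\|_{\mathcal N}$. Taking the supremum over $v$ with $\|v\|_{\mathcal N}\le 1$ gives \eqref{ls1}. The only delicate point is the neighbourhood bookkeeping: $B$ (and hence $B'$) are only available on $U^*$, so one must ensure that for $u$ in a small enough $\Xsigz$-ball around $\phi$ both $Pu$ and $Pu+\J'(u)$ remain in $U^*$ and that the Lipschitz constants of $B$ and of $\J'$ can be chosen uniformly on the relevant sub-neighbourhoods. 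Once this is arranged, the remainder is a routine chain-rule computation.
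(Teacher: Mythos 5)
Your proposal is correct and follows essentially the same route as the paper: both start from the chain-rule identity \eqref{dG}, use the identity $B\circ(\J'+P)u=u$ together with the local boundedness of $B'$ to control $\|B(Pu)-u\|_{\Xsigz}$ by $\|\J'(u)\|_{\Xsigz'}$, invoke the mean-value/Lipschitz estimate for $\J'$ coming from the continuity of $\J''$ near $\phi$, and conclude via the boundedness of $B'(Pu)$ and the equivalence of norms on the finite-dimensional space $\mathcal N$. The neighbourhood bookkeeping you flag at the end is exactly the point the paper also takes care to verify, so nothing is missing.
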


 \begin{proof}
Note by \eqref{dG} that
$$
\|H'(Pu)\|_{\mathcal N'} \lesssim
\left\| \J'(B \circ P u) \right\|_{\Xsigz'} 
  \left\| B' (P u) \right\|_{\mathcal L(\Xsigz', \Xsigz)}
  \leq C \left\| \J'(B \circ P u) \right\|_{\Xsigz'}
  $$
  for all $u \in \Xsigz$ satisfying $\|u-\phi\|_{\Xsigz} < \delta$ small enough,
since $B'$ is continuous from $U^*$ to $\mathcal L(\Xsigz', \Xsigz)$ (hence, in particular, $B'$ is bounded in a small neighbourhood of $P\phi$) and $\|Pu-P\phi\|_{\Xsigz'} \lesssim \|u-\phi\|_{\Xsigz} < \delta$ by $\dim \mathcal N < \infty$ (see \eqref{PuPphi}). Then
\begin{align*}
\|H'(Pu)\|_{\mathcal N'}
&\leq C \left(
\|\J'(u)\|_{\Xsigz'} 
+ \|\J'(B \circ P u) - \J'(u)\|_{\Xsigz'} \right).
\end{align*}
By the Mean-Value Theorem (see, e.g.,~\cite{GT}), one may also find that
\begin{align*}
\lefteqn{
\|\J'(B \circ P u) - \J'(u)\|_{\Xsigz'} 
}\\
&\leq \sup_{h \in \gamma (B\circ Pu,u)} \|\J''(h)\|_{\mathcal L(\Xsigz, \Xsigz')} 
\left\| B \circ P u - u \right\|_{\Xsigz}\\
&\leq C \left\| B \circ P u - B \circ (\J' + P) u \right\|_{\Xsigz}\\
&\leq C \sup \{ \|B'(Pu+h)\|_{\mathcal L(\Xsigz', \Xsigz)} \colon h \in \gamma(0, \J'(u))\}
 \|\J'(u)\|_{\Xsigz'} \leq C \|\J'(u)\|_{\Xsigz'}.
\end{align*}
Here and henceforth, $\gamma(u,v)$ denotes the line segment connecting $u$ and $v$. Indeed, due to the continuity of $\J'' : \Xsigz \to \mathcal L(\Xsigz, \Xsigz')$ (at $\phi$), we note that $\J''$ is bounded in a small neighbourhood of $\phi$ (in $\Xsigz$). Thus $\J''(h)$ is bounded in $\mathcal L(\Xsigz, \Xsigz')$ for $h \in \gamma(B \circ P u, u)$, since one finds that
\begin{align*}
\|B \circ P u - \phi\|_{\Xsigz}
&= \|B \circ P u - B \circ (\J' + P) \phi\|_{\Xsigz}\\
&\leq \sup \{ \|B'(h)\|_{\mathcal L(\Xsigz', \Xsigz)} \colon h \in \gamma(Pu, P\phi)\} \|Pu - P\phi\|_{\Xsigz'}\\
&\leq C \|u - \phi \|_{\Xsigz}
\end{align*}
(see also \eqref{u-phi}). Moreover, we also used
\begin{equation}\label{B'Pu+h}
\sup \{ \|B'(Pu+h)\|_{\mathcal L(\Xsigz', \Xsigz)} \colon h \in \gamma(0, \J'(u))\} \leq C.
\end{equation}
To see this, we observe that
\begin{align*}
\|Pu + h - P\phi\|_{\Xsigz'}
&\leq \|Pu - P\phi\|_{\Xsigz'} + \|h\|_{\Xsigz'}\\
&\lesssim \|u - \phi\|_{\Xsigz} + \|\J'(u)\|_{\Xsigz'}
\leq \delta + \|\J'(u)\|_{\Xsigz'}
\end{align*}
and
\begin{align*}
\|\J'(u)\|_{\Xsigz'} 
&= \|\J'(u) - \J'(\phi)\|_{\Xsigz'}\\
&\leq \sup_{v \in \gamma(u,\phi)}\|\J''(v)\|_{\mathcal L(\Xsigz,\Xsigz')} \|u - \phi\|_{\Xsigz}
\leq \sup_{v \in \gamma(u,\phi)}\|\J''(v)\|_{\mathcal L(\Xsigz,\Xsigz')} \delta. 
\end{align*}
  Therefore we observe that $Pu + h$ lies on a small neighbourhood of $P\phi$ in $\Xsigz'$ (and also $Pu + h \in U^*$) for $\delta>0$ small enough. 
  Thus \eqref{B'Pu+h} follows from the continuity of $B'$ at $P\phi$. Hence, we finally obtain \eqref{ls1}.
  \end{proof}

We next discuss how to replace $\J(B\circ P u)$ by $\J(u)$ in \eqref{eq2} and how to control an error arising from the replacement. By applying Taylor's theorem to $\J$, one has
\begin{align*}
 \left| \J(B\circ P u) - \J(u) \right|
 &\leq \|\J'(u)\|_{\Xsigz'} \|B \circ P u - u\|_{\Xsigz} \\
 &\quad + \dfrac 1 2 \|\J''(h)\|_{\mathcal L (\Xsigz,\Xsigz')} \|B \circ Pu - u\|_{\Xsigz}^2
\end{align*}
for some $h \in \gamma(B \circ P u, u)$. Then as in \eqref{B'Pu+h} we infer that
\begin{align*}
\|B \circ P u - u\|_{\Xsigz}
&= \|B \circ P u - B \circ (\J' + P) u\|_{\Xsigz}\\
&\leq \sup \{ \|B'(Pu+h)\|_{\mathcal L(\Xsigz', \Xsigz)} \colon h \in \gamma(0, \J'(u))\}
 \|\J'(u)\|_{\Xsigz'}\\
&\leq C \|\J'(u)\|_{\Xsigz'}.
\end{align*}
Thus we have obtained
$$
\left| \J(B \circ P u) - \J(u) \right| \leq C \|\J'(u)\|_{\Xsigz'}^2,
$$
whenever $\|u - \phi\|_{\Xsigz} \leq 1$.

Combining the inequality above with \eqref{eq2} and \eqref{ls1}, we have
\begin{align*}
 |\J(u)-\J(\phi)|
 &\leq \left|\J(u)-\J(B\circ P u)\right|
 + \left|\J(B \circ P u) - \J(\phi)\right|\\
 &\leq C \|\J'(u)\|_{\Xsigz}^2 + C
 \|\J'(u)\|_{\Xsigz'}^{1/(1-\theta)}\\
 &\leq C \|\J'(u)\|_{\Xsigz'}^{1/(1-\theta)},
\end{align*}
whenever $\|u - \phi\|_{\Xsigz} < \delta \leq \delta_0$, since $1/(1-\theta) \leq 2$. Thus we have proved that
\begin{equation}\label{new:LS}
\left| \J(v) - \J(\phi) \right|^{1-\theta}
\leq C \left\| \Asig v + g(v) \right\|_{\Xsigz'},
\end{equation} 
whenever $v \in \Xsigz$ and $\|v-\phi\|_{\Xsigz} < \delta$. Thus \eqref{LSI-2} holds (with $g(\cdot)$ replaced by $\tilde g(\cdot)$ in the cases of (\nii) and (\shi)).

In the cases of (\ichi) and (\san), we applied no replacement of $g(\cdot)$. Hence \eqref{LSI-2} follows directly (for the original $g(\cdot)$). In the cases of (\nii) and (\shi), recalling that
 $$
 \|\phi\|_{L^\infty(\Omega)} < \gamma \quad \mbox{ and } \quad g(s) = \tilde g(s) \ \mbox{ if } \ |s| < \gamma \vee \eta
 $$
 and noting that
\begin{align*}
\J(v) = \tJ(v) \ \mbox{ for }  \ v \in \Xsigz \cap L^\infty(\Omega) \ \mbox{
 satisfying } \ \|v\|_{L^\infty(\Omega)} < \eta 
\end{align*}
(here $\tJ$ denotes the functional $\J$ with $g$ replaced by $\tilde g$), we conclude that \eqref{LSI-2} is satisfied for $v \in \Xsigz \cap L^\infty(\Omega)$ satisfying $\|v\|_{L^\infty(\Omega)} < \eta$. Indeed, since $g(v) = \tilde g(v) \in L^\infty(\Omega)$, we see that
$$
\langle \tJ'(v), w \rangle = \langle \Asig v, w \rangle + \int_\Omega \tilde g(v) w \, \d x = \langle \Asig v, w \rangle + \int_\Omega g(v) w \, \d x
$$
for any $w \in \Xsigz$. This completes the proof. \qed


\section{Proof of Theorem \ref{thm:Loj}}\label{S:pr-conv}

This section provides a proof of Theorem \ref{thm:Loj}. Let $(u,w)$ be a solution of \eqref{eq:fCH}--\eqref{eq:bc} and let $\phi$ be a solution to \eqref{elliptic-f} such that 
$$
u(t_n) \to \phi \ \mbox{ strongly in } \Xsigz
\quad \mbox{ and } \quad \Esi(u(t_n)) \searrow \Esi(\phi)
$$
for some sequence $t_n \to \infty$ (hence $\J(u(t)) \geq \J(\phi)$ for all $t \geq 0$). Then $\phi$ is a critical point of $\J$, that is, $\J'(\phi) = 0$. Assume that one of (i)--(iv) of Theorem \ref{thm:Loj} is satisfied. Then thanks to Theorem \ref{T:LSI-bdd}, there exist constants $\theta \in (0,1/2]$, $\omega, \delta > 0$ such that
 \begin{equation}\label{LSI}
  \left| \Esi(v) - \Esi(\phi) \right|^{1-\theta}
   \leq \omega \left\| \Asig v + g(v) \right\|_{\Xsigz'}
 \end{equation}
 for $v \in \Xsz$ satisfying $\|v - \phi\|_{\Xsigz} < \delta$ (and also $\|v\|_{L^\infty(\Omega)} < \eta$ for the cases (\nii) and (\shi)). As for the cases (\nii) and (\shi), we suppose that $\|u\|_{L^\infty(\Omega \times (0,\infty))} < \eta$. 

Set
 $$
 H(t) := (\Esi(u(t)) - \Esi(\phi))^\theta \geq 0,
 $$
 where $\theta$ is as in \eqref{LSI}. Then we see by \eqref{LSI} that
\begin{align*}
 - \dfrac{\d}{\d t} H(t)
&= - \theta \left( \Esi(u(t)) - \Esi(\phi) \right)^{\theta - 1}
 \dfrac{\d}{\d t} \Esi(u(t))\\
&\stackrel{\eqref{ei1}}{\geq} \theta \left( \Esi(u(t)) - \Esi(\phi) \right)^{\theta - 1}
 \|w(t)\|_{\Xsz}^2\\
&\stackrel{\eqref{LSI}}\geq \omega^{-1} \theta \left\| \Asig u(t) + g(u(t)) \right\|_{\Xsigz'}^{-1} \|w(t)\|_{\Xsz}^2\\
&= \omega^{-1} \theta \left\| w(t) \right\|_{\Xsigz'}^{-1} \|w(t)\|_{\Xsz}^2,
\end{align*}
provided that $\|u(t)-\phi\|_{\Xsigz} < \delta$, where $\delta$ is also given by \eqref{LSI}. Here we note that $\left\| w(t) \right\|_{\Xsigz'}\leq C \|w(t)\|_{\Xsz}$.
Thus we obtain
\begin{equation}\label{ineq-main}
- \dfrac{\d}{\d t} H(t)
\geq \omega^{-1} \theta C^{-1} \|w(t)\|_{\Xsz}
= \omega^{-1} \theta C^{-1} \|\As w(t)\|_{\Xsz'},
\end{equation}
provided that $\|u(t)-\phi\|_{\Xsigz} < \delta$.

Now, we claim that
\begin{equation}\label{cl}
 u(t) \to \phi \quad \mbox{ strongly in } \Xsigz \ \mbox{ as }
  \ t \to \infty
\end{equation}
without taking any subsequence.
Indeed, fix any $\nu \in (0,\delta)$ and set 
$$
s_n := \inf \{s \geq t_n \colon \|u(s)-\phi\|_{\Xsigz} \geq \nu\} \in (t_n, +\infty]
$$
for $n$ large enough. Indeed, $\|u(t_n)-\phi\|_{\Xsigz} < \nu$ for $n$ large enough. Hence we deduce that $t_n < s_n$ for $n$ large enough from the right-continuity of $u$ in the strong topology of $\Xsigz$ (see Theorem \ref{thm:exi}). We shall prove $s_{n_\nu} = +\infty$ for some $n_\nu \in \N$. Then $\|u(s)-\phi\|_{\Xsigz} < \nu$ for all $s \geq t_{n_\nu}$, and hence, \eqref{cl} is proved. We assume on the contrary that $s_n$ is finite for all $n \in \mathbb N$. Then $\|u(t)-\phi\|_{\Xsigz} < \nu < \delta$ for all $t \in [t_n,s_n)$, and moreover, we also remark, by the right-continuity of $u(\cdot)$ in the strong topology of $\Xsigz$, that
\begin{equation}\label{contr}
 \|u(s_n)-\phi\|_{\Xsigz} \geq \nu > 0 \quad \mbox{ for all } \ n \in \N.
\end{equation}
Employing \eqref{ineq-main}, we obtain
\begin{align*}
 \|u(s_n)-\phi\|_{\Xsz'}
&\leq
\int^{s_n}_{t_n} \left\| \partial_t u(\tau)\right\|_{\Xsz'} \, \d \tau
+ \|u(t_n)-\phi\|_{\Xsz'}\\
&\leq \omega \theta^{-1} C
\int^{s_n}_{t_n} - \dfrac{\d}{\d \tau} H(\tau) \, \d \tau
+ \|u(t_n)-\phi\|_{\Xsz'}\\
&\leq - \omega \theta^{-1} C \left( H(s_n) - H(t_n)\right)
+ \|u(t_n)-\phi\|_{\Xsz'}\\
&\leq \omega \theta^{-1} H(t_n) + \|u(t_n)-\phi\|_{\Xsz'}
\to 0.
\end{align*}
 Here we employed Lebesgue's differentiation theorem to ensure the measurability of $t \mapsto (\d/\d t)H(t)$ and that
 $$
 \int^{s_n}_{t_n} \dfrac{\d}{\d \tau} H(\tau) \, \d \tau
 \geq H(s_n)-H(t_n),
 $$
 since $t \mapsto H(t)$ is non-increasing and right-continuous in $[0,\infty)$ and differentiable a.e.~in $(0,\infty)$ (see Theorem \ref{thm:exi}). Thus we deduce that
$$
u(s_n) \to \phi \quad \mbox{ strongly in } \Xsz'
\quad \mbox{ as } \ n \to \infty.
$$
Since $u(s)$ is bounded in $\Xsigz$ for all $s \geq 0$ by \eqref{g4-12} (see also \eqref{hp:g4}), one can take a (not relabeled) subsequence of $(s_n)$ such that
$$
u(s_n) \to \phi \quad \mbox{ weakly in } \Xsigz \mbox{ and strongly in } L^2(\mathbb R^N) \quad \mbox{ as } \ n \to \infty.
$$

Since $\Esi(u(\cdot))$ is nonincreasing, one has
$$
\Esi(u(s_n)) \to \Esi(\phi).
$$
Repeating the same argument as in the proof of Lemma \ref{L:omega}, one can show that
$$
u(s_n) \to \phi \quad \mbox{ strongly in } \Xsigz
$$
by using the (weak) lower semicontinuity argument and the uniform
 convexity of $\Xsigz$. However, this is a contradiction to
 \eqref{contr}. Thus \eqref{cl} follows. This completes the proof. \qed

 \begin{remark}[Rate of convergence]
  {\rm
  Recalling the energy estimates for $H(\cdot)$, one may also estimate the rate of convergence $\J(u(t)) \to \J(\phi)$ as $t \mapsto \infty$. Indeed, we have obtained
  $$
  - \dfrac \d {\d t} H(t)
  \geq \theta \left[H(t)\right]^{(\theta-1)/\theta} \|w(t)\|_{\Xsz}^2.
  $$
  By Theorem \ref{thm:Loj}, there exists $t_0 > 0$ such that $\|u(t)-\phi\|_{\Xsigz} < \delta$ for all $t \geq t_0$. Hence 
  $$
  \|w(t)\|_{\Xsz}^2 \geq C \|w(t)\|_{\Xsigz'}^2
  = C \|\Asig u(t) + g(u(t))\|_{\Xsigz'}^2
  \stackrel{\eqref{LSI}}{\geq} C \left[H(t)\right]^{2(1-\theta)/\theta}.
  $$
  Thus we have
  $$
  \dfrac{\d}{\d t} H(t) \leq - C \left[ H(t) \right]^{-\frac{\theta-1}\theta} \quad \mbox{ for all } \ t \geq t_0,
  $$
  whence follows
  $$
  H(t) \leq \left[ H(t_0)^{-(1-2\theta)/\theta} + C(1 - 2\theta)\theta^{-1}  (t-t_0) \right]^{- \theta/(1 - 2\theta)} \ \mbox{ if } \ \theta \in (0,1/2)  
  $$
  and
  $$
  H(t) \leq H(t_0) e^{-C(t-t_0)} \ \mbox{ if } \ \theta = \frac 1 2
  $$
  for all $t \geq t_0$.
  }
  \end{remark}


\section*{Acknowledgements}

Authors are supported by the JSPS-CNR bilateral joint research project: \emph{Innovative Variational Methods for Evolution Equations} and they would also like to
acknowledge the kind hospitality of the Erwin Schr\"odinger International Institute for Mathematics and Physics, where a part of this research was developed 
under the frame of the Thematic Program {\it Nonlinear Flows}. GA is supported by JSPS KAKENHI Grant Number 16H03946 and by the Alexander von Humboldt 
Foundation and by the Carl Friedrich von Siemens Foundation.
The present paper also benefits from the support of the MIUR-PRIN Grant 2010A2TFX2 ``Calculus of Variations''
for AS and GS, and of the GNAMPA (Gruppo Nazionale per l'Analisi Matematica, la Probabilit\`a 
e le loro Applicazioni) of INdAM (Istituto Nazionale di Alta Matematica). 
%


\appendix


\section{Poincar\'e type inequality}

The following inequality is actually well known (it may follow from Theorem~1 in~\cite{MaSh}
and interpolation). However, for the convenience of the reader, we give a direct and elementary proof. 

\begin{propo}[Fractional Poincar\'e inequality]\label{apdx:P:poincare}
Let $0 < s < 1$ and $1 \leq p < \infty$. Then there is a constant $c_{P}$ depending only on $p$, $s$, $N$ and the diameter of $\Omega$ such that
\begin{equation}\label{eq:poincare-p}
  \| v\|^p_{L^p(\Omega)}\le c_P \iint_{\RdN} \dfrac{|v(x)-v(y)|^p}{|x-y|^{N+ps}} \, \dx \, \dy
\end{equation}
for all $v\in W^{s,p}(\RN)$ satisfying $v = 0$ a.e.~in $\RN \setminus \Omega$.
\end{propo}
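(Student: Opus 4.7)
The plan is to exploit the hypothesis that $v$ vanishes outside $\Omega$ by integrating the Gagliardo difference quotient against points $y$ lying in a suitable set disjoint from $\Omega$, so that $|v(x)-v(y)|=|v(x)|$ on that set. The whole argument reduces to one double integral estimate and does not require interpolation.

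First, I would fix a ball $B_0$ of radius $R_0$ containing $\Omega$ (take $R_0$ proportional to $\mathrm{diam}(\Omega)$), and then choose a strictly larger concentric ball $B_1$ of radius $R_1=2R_0$. Set $A:=B_1\setminus B_0$; this is an annulus disjoint from $\Omega$ whose Lebesgue measure $\mu:=|A|$ is strictly positive and depends only on $N$ and $\mathrm{diam}(\Omega)$. Since $v$ extended by zero vanishes on $A$, for every $x\in\Omega$ and every $y\in A$ one has $|v(x)-v(y)|^p=|v(x)|^p$, and moreover $|x-y|\le 2R_1=:D$.

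Next, for each $x\in\Omega$ I would write
\[
  |v(x)|^p \;=\; \frac{1}{\mu}\int_A |v(x)-v(y)|^p\,\dy
  \;\le\; \frac{D^{N+ps}}{\mu}\int_A \frac{|v(x)-v(y)|^p}{|x-y|^{N+ps}}\,\dy,
\]
simply bounding $1$ by $(D/|x-y|)^{N+ps}$. Integrating over $x\in\Omega$, extending the inner integral from $A$ to $\R^N$, and the outer integral from $\Omega$ to $\R^N$ (both enlargements are legitimate since the integrand is nonnegative) yields the claimed inequality with $c_P=D^{N+ps}/\mu$, a constant depending only on $p$, $s$, $N$ and $\mathrm{diam}(\Omega)$.

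There is essentially no obstacle: the only point that requires any care is making sure the reference set $A$ is disjoint from $\Omega$ and has uniformly controlled distance to $\Omega$, which is what the choice $A=B_1\setminus B_0$ with $B_0\supset\Omega$ guarantees.
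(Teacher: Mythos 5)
Your argument is correct and is essentially the same as the paper's: both restrict the Gagliardo double integral to pairs with one point in $\Omega$ and the other in an annulus disjoint from $\Omega$ (the paper uses $B_{R+1}\setminus B_R$, you use $B_{2R_0}\setminus B_{R_0}$), use that $v$ vanishes on the annulus, and bound the kernel below by a constant since $|x-y|$ is controlled by the diameter. The resulting constant $c_P = D^{N+ps}/\mu$ matches the paper's $\,(2R+1)^{N+sp}/|B_{R+1}\setminus B_R|$ up to the cosmetic choice of annulus.
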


\begin{proof}
Let $v\in W^{s,p}(\RN)$ be such that $v = 0$ a.e.~in $\RN \setminus \Omega$.
 One can take $R > 0$ such that $\Omega$ is contained in the open ball $B_R$ of radius $R$ centered at the origin. Then, by definition of the Gagliardo-seminorm, we see that
\begin{align*}
 [v]_{W^{s,p}(\mathbb{R}^N)}^p &\geq \int_{\Omega^c} \left(\int_\Omega
 \dfrac{|v(y)|^p}{|x-y|^{N+sp}} \, \dy \right)\, \dx\\
&\geq \int_\Omega \left( \int_{\Omega^c \cap B_{R+1}} \dfrac 1 {|x-y|^{N+sp}} \, \dx \right) |v(y)|^p \, \dy\\
&\geq \dfrac{|B_{R+1} \setminus B_R|}{(2R+1)^{N+sp}} \|v\|_{L^p(\Omega)}^p,
\end{align*}
where $\Omega^c$ stands for the complement of $\Omega$ and $|B_{R+1}
\setminus B_R|$ denotes the Lebesgue measure of the set $B_{R+1}
\setminus B_R$. Note that $|B_{R+1} \setminus \Omega| \geq |B_{R+1}
\setminus B_R| > 0$. Thus \eqref{eq:poincare-p} follows.
\end{proof}


\section{Some technical details}


\subsection{Proof of \eqref{exi:12}}\label{apdx:s:beta-est}

We derive \eqref{exi:12} by employing an approximation argument (see also~\cite{ASSe1}). Let $u$ be a solution and let $\beta_\vep$ be the
Yosida approximation of $\beta$. Test \eqref{eq:u} by $\beta_\vep(u)$ instead of $\beta(u)$. Here we observe that $\beta_\vep(u)$ belongs 
to $\Xsigz$ due to the Lipschitz continuity of $\beta_\vep$ and $u \in \Xsigz$, and hence,
\begin{align*}
\left\langle \Asig u , \beta_\vep(u) \right\rangle_{\Xsigz}
 = \dfrac{C_\sigma}2 \iint_{\RdN} \dfrac{\left(u(x,t)-u(y,t)\right)\left(\beta_\vep(u(x,t))-\beta_\vep(u(y,t))\right)}{|x-y|^{N+2\sigma}} \, \dx \, \dy
 \geq 0
\end{align*}
by the monotonicity of $\beta_\vep$. Thus recalling that it has already been
proved that $\beta(u) \in L^2(0,T;L^2(\Omega))$, we find that
\begin{align*}
 \left( \beta(u), \beta_\vep(u) \right)
 &\leq \left( w + \lambda u, \beta_\vep(u) \right).
\end{align*}
Note that $\|\beta_\vep(u)\|_{L^2(\Omega)}^2 \leq (\beta(u), \beta_\vep(u))$. Then we infer that
$$
\int^{t+1}_t \left\|\beta_\vep(u(\tau))\right\|_{L^2(\Omega)}^2 \, \d \tau
\leq \int^{t+1}_t \left( w(\tau) + \lambda u(\tau), \beta_\vep(u(\tau)) \right) \, \d \tau.
$$
Hence passing to the limit as $\vep \to 0_+$ and exploiting the fact that
$$
\beta_\vep(u) \to \beta(u) \quad \mbox{ strongly in } L^2(t,t+1;L^2(\Omega)) \quad \mbox{ for } \ t > 0
$$
by Lebesgue's dominated convergence theorem (recall that $|\beta_\vep(r)| \leq |\beta(r)|$ and $\beta(u) \in L^2(t,t+1;L^2(\Omega))$), we obtain \eqref{exi:12}.


\subsection{Proof of \eqref{b-leq-f}}\label{apdx:ss:bu}

Let $p \geq 2$ and set $\delta(s) := |\beta(s)|^{p-2} \beta(s)$. Then $\delta$ is maximal monotone in $\R$. Denote by $j_\vep^\delta$
and $\delta_\vep$ the resolvent and Yosida approximation, respectively, of $\delta$ (more precisely, for each $\vep > 0$, $j_\vep^\delta$ 
is the inverse of the map $s \mapsto s + \vep \delta(s)$ and $\delta_\vep(s) := (s - j_\vep^\delta(s))/\vep$). Since $j_\vep^\delta$ is non-expansive, 
i.e., $|j_\vep^\delta(s) - j_\vep^\delta(\sigma)| \leq |s - \sigma|$ for $s, \sigma \in \R$, we note that $j_\vep^\delta(u) \in \Xsz$ if $u \in \Xsz$, 
and hence, so does $\delta_\vep(u)$. Test \eqref{eq:u:ell} by $\delta_\vep(u) \in \Xsigz$ to have
\begin{equation}\label{Asu-del}
\left\langle \Asig u, \delta_\vep(u) \right\rangle_{\Xsigz} + \int_\Omega \beta(u) \delta_\vep(u) \, \d x = \int_\Omega f \delta_\vep(u) \, \d x.
\end{equation}
We then note that
$$
\beta(u) \delta_\vep(u) = \beta(u) |\beta(j_\vep^\delta u)|^{p-2} \beta(j_\vep^\delta u) \geq |\beta(j_\vep^\delta u)|^p,
$$
since one can write $\delta_\vep (s) = \delta(j_\vep^\delta s)$ for $s \in \R$. Here we also used the fact that
$$
\beta(s) \beta(j_\vep^\delta s) \geq \beta(j_\vep^\delta s)^2 \quad \mbox{ for all } \ s \in \R,
$$
thanks to the monotonicity of $\beta$ and $\beta(0) = 0$ along with the following properties of $j_\vep^\delta$:
$$
0 \leq \pm j_\vep^\delta(s) \leq \pm s \quad \mbox{ if } \ \pm s \geq 0.
$$
Moreover, we remark that
$$
\left\langle \Asig u, \delta_\vep(u) \right\rangle_{\Xsigz} \geq 0 \quad \mbox{ for } \ u \in \Xsigz
$$
by the monotonicity of $\delta_\vep$. Thus combining these facts with \eqref{Asu-del}, we obtain
$$
\|\beta(j_\vep^\delta u)\|_{L^p(\Omega)}^p \leq \|f\|_{L^p(\Omega)} \|\beta(j_\vep^\delta u)\|_{L^p(\Omega)}^{p-1},
$$
which implies
$$
\|\beta(j_\vep^\delta u)\|_{L^p(\Omega)} \leq \|f\|_{L^p(\Omega)}.
$$
Letting $\vep \to 0_+$, one can conclude by Fatou's lemma that
$$
\beta(u) \in L^p(\Omega) \quad \mbox{ and } \quad
\|\beta(u)\|_{L^p(\Omega)} \leq \|f\|_{L^p(\Omega)}
$$
since $j_\vep^\delta(s) \to s$ for $s \in \R$.

\subsection{Proof of Lemma \ref{L:ellip}}\label{apdx:ss:L:ellip}

Let $p \geq 2$. Set $\gamma^p(s) := |s|^{p-2}s$. Then $\gamma$ is maximal monotone in $\R$. As in the last subsection, 
set the resolvent $j^p_\mu(s) := (1 + \mu \gamma^p)^{-1} (s)$ and the Yosida approximation $\gamma^p_\mu (s) := (s - j^p_\mu (s))/\mu = \gamma^p(j^p_\mu(s))$ 
(for $s \in \mathbb R$) of $\gamma$. We further remark that $\gamma^p_\mu(0) = j^p_\mu(0) = 0$. Then $j^p_\mu v(\cdot)$ and $\gamma^p_\mu(v(\cdot))$ belong to $\Xsigz$ if $v \in \Xsigz$.
%
Assume that $u \in \Xsigz \cap L^p(\Omega)$ and test \eqref{eq:u:ell} by $\gamma^p_\mu(u) \in \Xsigz$ to get
\begin{align*}
\dfrac{C_\sigma}2 \iint_{\mathbb R^{2N}}
\dfrac{\left(u(x)-u(y)\right)\left(\gamma^p_\mu(u(x)) - \gamma^p_\mu(u(y)) \right)}{|x-y|^{N+2\sigma}} ~\dx \, \dy
+ \int_\Omega \beta(u) \gamma^p_\mu(u) \,\dx\\
= \int_\Omega f \gamma^p_\mu(u) \,\dx 
\leq \|j_\mu^p (u)\|_{L^p(\Omega)}^{p-1} \|f\|_{L^p(\Omega)}.
\end{align*}
In the procedure above, we have also used that
$$
\|\gamma^p_\mu(u)\|_{L^{p'}(\Omega)}
= \|j^p_\mu(u)\|_{L^p(\Omega)}^{p-1}.
$$
Here we note by $\beta(0) = 0$ that
$$
\int_\Omega \beta(u) \gamma^p_\mu(u) ~\d x
\geq 0
$$
and that
\begin{align*}
\lefteqn{
\iint_{\mathbb R^{2N}}
\dfrac{\left(u(x)-u(y)\right)\left(\gamma^p_\mu(u(x)) - \gamma^p_\mu(u(y)) \right)}{|x-y|^{N+2\sigma}} \, \dx \, \dy
}\\
&\geq
\iint_{\mathbb R^{2N}}
\dfrac{\left(j^p_\mu (u(x))-j^p_\mu (u(y))\right)\left(\gamma^p_\mu(u(x)) -
 \gamma^p_\mu(u(y)) \right)}{|x-y|^{N+2\sigma}} \, \dx \, \d y\\
&\geq
\omega_0 \iint_{\mathbb R^{2N}}
\dfrac{\big|j^p_\mu (u(x))-j^p_\mu (u(y)) \big|^p}{|x-y|^{N+2\sigma}} \, \dx \, \dy
\geq \omega_0 [j^p_\mu(u)]_{W^{\frac{2\sigma}p, p}(\RN)}^p
\end{align*}
by using the well-known inequality,
$$
\omega_0 |a-b|^p \leq \left(a-b\right)\left(|a|^{p-2}a - |b|^{p-2}b\right)
\quad \mbox{ for all } \ a,b \in \mathbb R
\ \mbox{ and } \ p \geq 2
$$
for some constant $\omega_0 > 0$.
%
%
Combining these facts and using the monotonicity of $\gamma^p_\mu$, we obtain
$$
\dfrac{C_\sigma}2 \omega_0 [j^p_\mu(u)]_{W^{\frac{2\sigma}p, p}(\RN)}^p
\leq \|j_\mu^p(u)\|_{L^p(\Omega)}^{p-1} \|f\|_{L^p(\Omega)}.
$$
Hence by virtue of the Poincar\'e type inequality (see Proposition \ref{apdx:P:poincare} and recall that $j_\mu^p(u) \equiv 0$ in $\R^N \setminus \Omega$), we have
 $$
 [j^p_\mu(u)]_{W^{\frac{2\sigma}p,p}(\RN)} \leq C \|f\|_{L^p(\Omega)}.
 $$
Passing to the limit as $\mu \to 0$, since $j^p_\mu u \to u$ strongly in
 $L^2(\mathbb R^N)$, we deduce that $u \in W^{\frac{2\sigma}p, p}(\RN)$ and that
\begin{equation}\label{regu}
\|u\|_{W^{\frac{2\sigma}p, p}(\RN)}
\leq C \|f\|_{L^p(\Omega)}.
\end{equation}
This completes the proof.


\subsection{Proof of \eqref{star}}\label{apdx:ss:star} From the weak formulation,
\begin{align*}
 \left\langle \As v , z \right\rangle_{\Xsz} &= \dfrac{C_s}2 \iint_{\RdN} \dfrac{\left(v(x)-v(y)\right)\left(z(x)-z(y)\right)}{|x-y|^{N+2s}} \, \dx \, \dy\\
 &\leq \|v\|_{\Xsz} \|z\|_{\Xsz} \quad \mbox{ for all } \ v,z \in \Xsz.
\end{align*}
On the other hand, we see that
$$
\left\langle \As v , v \right\rangle_{\Xsz} = \|v\|_{\Xsz}^2 \quad \mbox{ for all } \ v \in \Xsz.
$$
Thus
$$
\|\As v\|_{\Xsz'} = \|v\|_{\Xsz} \quad \mbox{ for all } \ v \in \Xsz,
$$
which is \eqref{star}.


\subsection{Proof of \eqref{Lu1=h1}}\label{apdx:ss:Lu1}

From the fact that $h_1 \in \mathcal N^\bot \cap L^p(\Omega) \subset \Xsigz'$, by Claim \ref{new:claim1}, there exists $u_1 \in \Xsigz$ such that
 $$
 \left( \mathcal L(\phi) + P \right)(u_1) = h_1.
 $$
 Test it by $v \in \mathcal N$. Then
 $$
 \left( \mathcal L(\phi)(u_1), v \right) + \left( P u_1, v \right)
 = (h_1,v),
 $$
 which implies $(Pu_1, v) = 0$ for all $v \in \mathcal N$ by $h_1 \in \mathcal N^\bot$ and the symmetry of $\mathcal L(\phi)$. 
 Hence $Pu_1\in {\mathcal N}^\bot$, and therefore, $P u_1 = 0$ (i.e., $u_1 \in \mathcal N^\bot$) and $\mathcal L(\phi) u_1 = h_1$.
 Since $h_1 \in L^p(\Omega)$, one deduces that $u_1 \in \Xp$ by definition.

\subsection{Analyticity of $\J$ in $V_p$}\label{apdx:s:analytic}

We have already checked that $\J' : \Xp \to L^p(\Omega)$ is analytic in $V_p$. Hence, for $n \in \N$ there exist a constant $r > 0$ and a symmetric bounded $n$ linear form $T_n : (\Xp)^n \to L^p(\Omega)$ such that, for $v \in V_p$ and $h \in \Xp$,
\begin{align}
 \J(v+h)-\J(v)
 &= \int^1_0 \dfrac{\d}{\d t} \J(v+th) \, \d t\nonumber\\
 &= \int^1_0 \left\langle \J'(v+th),h\right\rangle_{\Xsigz} \, \d t\nonumber\\
 &= \int^1_0 \bigg\langle h, \sum_{n=0}^\infty t^n [T_n(v)] (\underbrace{h,\ldots,h}_{n \text{ times}})  \bigg\rangle_{L^p(\Omega)} \d t \label{apdx:series}\\
 &= \sum_{n=0}^\infty \bigg\langle h, \dfrac{1}{n+1} [T_n(v)](\underbrace{h,\ldots,h}_{n \text{ times}}) \bigg\rangle_{L^p(\Omega)}\nonumber
\end{align}
provided that $\|h\|_{\Xp} < r$. Here, we note that, for all $t \in [0,1]$,
\begin{align*}
\sum_{n=0}^\infty t^n \Big\| [T_n(v)] (\underbrace{h,\ldots,h}_{n \text{ times}}) \Big\|_{L^p(\Omega)}
 &\leq \sum_{n=0}^\infty \|T_n(v)\|_{\mathcal L^n(\Xp,L^p(\Omega))} \|h\|_{\Xp}^n < \infty,
\end{align*}
provided that $\|h\|_{\Xp} < r$. Hence the series \eqref{apdx:series} is convergent uniformly for $t \in [0,1]$, and therefore, the termwise integration is admissible. Set
$$
\widehat T_n(v)(h_1,h_2,\ldots,h_n) := \dfrac 1 {n+1} \bigg\langle h_n, [T_{n-1}(v)] (h_1,\ldots,h_{n-1}) \bigg\rangle_{L^p(\Omega)}.
$$
Then,  repeating the same argument, one deduces that
\begin{align*}
\sup_{n \in \N} \|\widehat T_n(v)\|_{\mathcal L^n(\Xp, L^p(\Omega))} r^n
 &\leq \sup_{n \in \N}\dfrac{Cr}{n+1} \left\|T_{n-1}(v)\right\|_{\mathcal L^{n-1}(\Xp,L^p(\Omega))} r^{n-1} < \infty.
\end{align*}
Consequently, $\J$ turns out to be analytic on $V_p$.

\section{Hopf's lemma for the fractional Laplacian}\label{S:hopf}

Let us state Hopf's lemma provided in~\cite{GrSe} with slight and straightforward modifications. 

\begin{propo}[Hopf's lemma for the fractional Laplacian~\cite{GrSe}]\label{P:Hopf}
 Let us assume that $\Omega \subset \R^N$ satisfies the \emph{uniform interior ball condition}, that is, there exists $r > 0$ such that for all $x \in \partial \Omega$ one can take a ball $B_x \subset \Omega$ of radius $r$ such that $\partial B_x \cap \partial \Omega = \{x\}$. Let $c \in L^\infty(\Omega)$ and let $u$ be a lower semicontinuous function $u : \R^N \to \R$ satisfying
 $$
 \Ds u(x) \geq c(x) u(x) \ \mbox{ a.e.~in } \Omega.
 $$
 If $u \geq 0$ in all of $\R^N$, then either $u$ vanishes identically in $\Omega$ or there exists $\delta_0 > 0$ such that, for any $x \in \partial \Omega$,
 \begin{equation}\label{hopf}
  \liminf_{B_x \ni z \to x} \dfrac{u(z)}{\delta(z)^s} \geq \delta_0,
 \end{equation}
 where $\delta(z)$ is given by
 $$
 \delta(z) := 
 \mathrm{dist} (z, \partial B_x)
 \ \mbox{ for } z \in B_x.
 $$
\end{propo}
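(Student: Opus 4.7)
\medskip
\noindent
\textbf{Proof proposal for Proposition~\ref{P:Hopf}.}
Since the statement is essentially that of~\cite{GrSe}, the plan is to adapt the classical barrier-and-comparison argument to the setting where we only have the one-sided inequality $(-\Delta)^s u \geq c(x)u$ with $c \in L^\infty(\Omega)$ (rather than $c \equiv 0$). I would proceed locally: fix $x_0 \in \partial\Omega$ and, exploiting the uniform interior ball condition, pick a ball $B = B_r(y_0) \subset \Omega$ with $\partial B \cap \partial\Omega = \{x_0\}$. The goal is to show \eqref{hopf} on this particular $B$; the uniformity in $x_0$ then follows because the radius $r$ and all constants depend only on $r$, $\|c\|_{L^\infty}$, and $N,s$.

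The first step is a strong minimum principle: if $u$ is not identically zero in $\Omega$, then $u>0$ in $\Omega$. This follows from the inequality $(-\Delta)^s u \geq c(x)u$ combined with $u\geq 0$ on all of $\mathbb R^N$ by a standard argument (if $u(x_*)=0$ at some interior point $x_*$, then $(-\Delta)^s u(x_*)=-\tfrac{C_{N,s}}{2}\int \frac{u(x_*+h)+u(x_*-h)}{|h|^{N+2s}}dh \leq 0$, forcing $u\equiv 0$ a.e.). In particular, we can fix a smaller ball $B' = B_{r/2}(y_0) \Subset B$ on which $u\geq m>0$.

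Next I would build a barrier $v$ supported in $B$ with the correct boundary behaviour. The natural choice is the $s$-torsion profile on $B$, namely
\[
v(z) := \bigl(r^2-|z-y_0|^2\bigr)_+^{s}\,\chi_{B}(z) - \kappa\,\chi_{B \setminus B'}(z),
\]
or more simply the solution of $(-\Delta)^s v = -1$ in $B\setminus \overline{B'}$, $v=0$ in $\mathbb R^N\setminus B$, $v=m$ in $B'$, which is known to satisfy $v(z) \asymp \mathrm{dist}(z,\partial B)^{s}$ as $z \to \partial B$ from the interior (see, e.g.,~\cite{RoSe12}). The point is that, by taking $\kappa$ small relative to $m$, the function $w := v$ (suitably rescaled by a small factor $\varepsilon$) obeys
\[
(-\Delta)^s w \leq -\tfrac12 \quad\text{in } B\setminus\overline{B'}, \qquad w\leq u \ \text{in } B' \cup (\mathbb R^N \setminus B),
\]
and hence also $(-\Delta)^s w \leq c(z) w$ in $B\setminus\overline{B'}$ provided $\varepsilon$ is small enough so that $\|c\|_{L^\infty}\,w \leq \tfrac12$ there (here I use that $w$ is bounded on the annulus).

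The final step is comparison: on the annular region $B\setminus\overline{B'}$ we have $(-\Delta)^s(u-w) \geq c(z)(u-w)$, $u-w \geq 0$ in the complement, and $c \in L^\infty$; the fractional weak maximum principle (valid for $\|c^+\|_{L^\infty}$ small on a thin enough layer, which we can arrange by shrinking $r$ if needed, or by the standard eigenvalue comparison) then yields $u \geq w$ throughout $B$. Since $w(z) \geq \delta_0\,\mathrm{dist}(z,\partial B)^{s}$ for $z\in B$ near $\partial B$, \eqref{hopf} follows at $x_0$.

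The main obstacle will be handling the $c(z)u$ term in the comparison step, since the straightforward maximum principle for the fractional Laplacian is stated for $c\equiv 0$. The workaround is to localize to a thin annulus where $\|c\|_{L^\infty}$ times the first Dirichlet eigenvalue of $(-\Delta)^s$ on that annulus is $<1$ (which can be guaranteed by shrinking $r$, independently of $x_0$), so that the spectral/ABP-type maximum principle applies. Everything else (existence and sharp boundary behaviour of the barrier $v$, the strong minimum principle) is by now standard in the fractional literature and essentially appears in~\cite{GrSe, RoSe12, RoSe13}, so only minor bookkeeping is needed to absorb the $c(z)u$ term.
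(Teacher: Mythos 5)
The paper itself contains no proof of Proposition~\ref{P:Hopf}: the statement is imported from Greco--Servadei~\cite{GrSe} ``with slight and straightforward modifications,'' so the only thing to compare your sketch against is that reference --- and your overall strategy (strong minimum principle, barrier built from the profile $(r^2-|z-y_0|^2)_+^{s}$ on an annulus, comparison) is indeed the one used there. Two concrete points in your sketch need repair, however. First, the barrier in the form $v=(r^2-|z-y_0|^2)_+^{s}\chi_{B}-\kappa\chi_{B\setminus B'}$ does not work: subtracting a constant on the annulus makes $v$ negative near $\partial B$ and destroys exactly the lower bound $v\gtrsim \mathrm{dist}(\cdot,\partial B)^s$ that \eqref{hopf} requires, and moreover $(-\Delta)^s(r^2-|z-y_0|^2)_+^{s}$ equals a \emph{positive} constant in $B$ (Getoor's formula), so this profile alone is a supersolution, not a subsolution. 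The standard fix is $w=\varepsilon\bigl((r^2-|z-y_0|^2)_+^{s}+K\chi_{B'}\bigr)$ with $K$ large: for $z$ in the annulus one has $(-\Delta)^s(K\chi_{B'})(z)=-K C_s\int_{B'}|z-y|^{-N-2s}\,\mathrm{d}y\leq -cK$ uniformly, which for $K$ large dominates both the positive constant from the first term and the zeroth-order contribution $\|c\|_{L^\infty}\varepsilon r^{2s}$, while $u\geq m>0$ on $\overline{B'}$ lets you verify $w\leq u$ off the annulus by shrinking $\varepsilon$. Second, your thin-annulus/small-eigenvalue workaround for the $c(z)u$ term is both unnecessary and stated with the inequality inverted (the maximum principle needs $\|c^+\|_{L^\infty}<\lambda_1$, and $\lambda_1$ \emph{increases} as the annulus thins, so the condition is $\|c\|_{L^\infty}\lambda_1^{-1}<1$, not $\|c\|_{L^\infty}\lambda_1<1$): since $u\geq 0$ on all of $\R^N$, one has $c(x)u\geq-\|c\|_{L^\infty}u$, so $u$ is a supersolution of the operator $(-\Delta)^s+\|c\|_{L^\infty}$, whose zeroth-order coefficient is nonnegative; for such an operator the minimum principle follows on any bounded set by evaluating at a negative interior minimum, with no smallness requirement. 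This reduction is how~\cite{GrSe} absorbs $c$. A last caveat: the differential inequality is assumed only a.e., so evaluating $(-\Delta)^s u$ at the specific minimum point in your strong-minimum-principle step (and in the comparison step) needs a word of justification via the lower semicontinuity hypothesis, as is done in~\cite{GrSe}.
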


\begin{remark}\label{R:Hopf}
 {\rm
The conclusion of the proposition above also holds true if one assumes that $u \geq 0$ in $\R^N \setminus \Omega$ and $c \leq 0$ in $\Omega$ (instead of $u \geq 0$ in $\R^N$).
 }
\end{remark}


\section{Justification of the proof for Theorem \ref{thm:ener}}\label{apdx:thm3}

Let $T > 0$, $N \in \N$ and set $\tau := T/N > 0$. As in~\cite{ASSe1} (see also \S \ref{ss:ex}), we introduce the following time-discretization of \eqref{eq:w} and \eqref{eq:u}:
\begin{alignat}{4}
& \dfrac{u_n - u_{n-1}}{\tau} + \As w_n = 0 \ &\mbox{ in }&
 \Xsz',\label{td1}\\
& w_n = \Asig u_n + \beta(u_n) - \lambda u_{n-1} \ &\mbox{ in }&
 \Xsigz'\label{td2}
\end{alignat}
for $n = 1,2,\ldots,N$ (here $\beta$ may be replaced by $\beta_\vep$ if necessary as in \S \ref{ss:ex}). Then as in~\cite{ASSe1}, one obtains
\begin{equation}\label{td-e1}
 \sum_{n = 1}^N  \tau \|w_n\|_{\Xsz}^2 + \sum_{n=1}^N \tau \left\| \dfrac{u_n - u_{n-1}}\tau\right\|_{\Xsz'}^2 + \max_n \Esi(u_n) \leq C_1,
\end{equation}
where $C_1$ is a constant depending only on $\Esi(u_0)$ and the constant $C$ of \eqref{g4-12}. We next differentiate \eqref{td2} as follows:
\begin{equation}\label{td2-diff}
 w_n - w_{n-1} = \Asig (u_n - u_{n-1}) + \beta(u_n) - \beta(u_{n-1}) - \lambda \left( u_{n-1} - u_{n-2} \right). 
\end{equation}
Test it by $u_n - u_{n-1}$. It follows that
$$
\left(w_n - w_{n-1}, u_n - u_{n-1}\right)
\geq \|u_n - u_{n-1}\|_{\Xsigz}^2 - \lambda \left( u_{n-1} - u_{n-2}, u_n - u_{n-1}\right).
$$
Moreover, the multiplication of \eqref{td1} with $w_n - w_{n-1}$ implies
$$
\left( \dfrac{u_n - u_{n-1}}\tau, w_n - w_{n-1} \right) + \dfrac 1 2 \|w_n\|_{\Xsz}^2 \leq \dfrac 1 2 \|w_{n-1}\|_{\Xsz}^2.
$$
Thus we find that
\begin{align}
 \dfrac 1 2 \|w_n\|_{\Xsz}^2 +  \tau \left\| \dfrac{u_n - u_{n-1}}\tau \right\|_{\Xsigz}^2
 \leq \dfrac 1 2  \|w_{n-1}\|_{\Xsz}^2 + \lambda \tau \left( \dfrac{u_{n-1} - u_{n-2}} \tau, \dfrac{u_n - u_{n-1}}\tau \right).
 \label{save}
\end{align}
Furthermore, multiplying both sides by $n\tau$, one has
 \begin{align*}
  \lefteqn{
 \dfrac 1 2 n\tau\|w_n\|_{\Xsz}^2 +  n\tau^2 \left\| \dfrac{u_n - u_{n-1}}\tau \right\|_{\Xsigz}^2
  }\\
  &\leq \dfrac 1 2 (n-1) \tau \|w_{n-1}\|_{\Xsz}^2 + \dfrac 1 2 \tau \|w_{n-1}\|_{\Xsz}^2 + \lambda n\tau^2 \left( \dfrac{u_{n-1} - u_{n-2}} \tau, \dfrac{u_n - u_{n-1}}\tau \right).
 \end{align*}
 For any $m \in \N \cap [2,N]$, summing up from $n = 2$ up to $m$, we deduce that
 \begin{align*}
  \lefteqn{
 \dfrac 1 2 m\tau\|w_m\|_{\Xsz}^2 +  \sum_{n = 2}^m n\tau^2 \left\| \dfrac{u_n - u_{n-1}}\tau \right\|_{\Xsigz}^2
  }\\
  &\leq \dfrac 1 2 \tau \|w_1\|_{\Xsz}^2 + \dfrac 1 2 \sum_{n = 2}^m \tau \|w_{n-1}\|_{\Xsz}^2 + \lambda \sum_{n=2}^m n\tau^2 \left( \dfrac{u_{n-1} - u_{n-2}} \tau, \dfrac{u_n - u_{n-1}}\tau \right).
 \end{align*}
 Moreover, by Ehrling's lemma (along with $\Xsigz \hookrightarrow H_0 \simeq H_0' \hookrightarrow \Xsz'$ compactly), for any $\vep > 0$ one can take $C_\vep \geq 0$ such that
  \begin{align*}
   \lefteqn{
   \sum_{n=2}^m n\tau^2 \left( \dfrac{u_{n-1} - u_{n-2}} \tau, \dfrac{u_n - u_{n-1}}\tau \right)}\\
   &\leq \sum_{n=2}^m n\tau^2 \Bigg(
   \vep \left\| \dfrac{u_{n-1}-u_{n-2}}\tau \right\|_{\Xsigz}^2
   + C_\vep \left\| \dfrac{u_{n-1}-u_{n-2}}\tau \right\|_{\Xsz'}^2\\
   &\quad + \vep \left\| \dfrac{u_n-u_{n-1}}\tau \right\|_{\Xsigz}^2
   + C_\vep \left\| \dfrac{u_n-u_{n-1}}\tau \right\|_{\Xsz'}^2
   \Bigg)\\
   &\leq 2 \sum_{n=2}^m (n+1) \tau^2 \left( \vep \left\| \dfrac{u_{n}-u_{n-1}}\tau \right\|_{\Xsigz}^2
   + C_\vep \left\| \dfrac{u_{n}-u_{n-1}}\tau \right\|_{\Xsz'}^2 \right)\\
   &\quad + 2\vep \left\| u_1-u_0 \right\|_{\Xsigz}^2 + 2C_\vep \left\| u_1-u_0 \right\|_{\Xsz'}^2.
  \end{align*}
  Therefore, combining these facts (with $\vep > 0$ small enough so that $2(n+1) \vep \lambda \leq n /2$ for $n \geq 2$), we deduce that
 \begin{align*}
  \lefteqn{
 \dfrac 1 2 m\tau\|w_m\|_{\Xsz}^2 + \dfrac 1 2 \sum_{n = 2}^m n\tau^2 \left\| \dfrac{u_n - u_{n-1}}\tau \right\|_{\Xsigz}^2
  }\\
  &\leq \sum_{n = 2}^m \tau \|w_{n-1}\|_{\Xsz}^2 + C \Bigg( \sum_{n=2}^m (n+1)\tau^2 \left\| \dfrac{u_{n}-u_{n-1}}\tau \right\|_{\Xsz'}^2 \\
  & \mbox{}~~~~~ + \left\| u_1-u_0 \right\|_{\Xsigz}^2 + \left\| u_1-u_0 \right\|_{\Xsz'}^2 \Bigg)\\
  &\stackrel{\eqref{td-e1}}{\leq} C_2 (1 + m\tau),
 \end{align*}
 where we have also used \eqref{td-e1} and 
 $C_2$ is independent of $m$, $\tau$, $N$ and $T$.

 Now, using the above relation and \eqref{td-e1} again, for any $k \in \N \cap (2,N)$, one can take $n_k \in [2,k]$ such that
 $$
 \|w_{n_k}\|_{\Xsz}^2 + n_k \tau \left\|\dfrac{u_{n_k}-u_{n_{k-1}}}\tau\right\|_{\Xsigz}^2 \leq \dfrac{C_3}{(k-2)\tau},
 $$
 where $C_3 := 2C_2(1+k\tau) + C_1$. Recalling \eqref{save}, summing up both sides from $n = n_k + 1$ until $n = m$ and repeating the same argument as before, we then have
 \begin{align*}
  \lefteqn{
  \dfrac 1 2 \|w_m\|_{\Xsz}^2 + \dfrac 1 2 \sum_{n = n_k + 1}^m \tau \left\|\dfrac{u_n-u_{n-1}}\tau\right\|_{\Xsigz}^2
  }\\
  &\leq \dfrac 1 2 \|w_{n_k}\|_{\Xsz}^2 + \lambda C_\vep \sum_{n = 1}^N \tau \left\|\dfrac{u_n - u_{n-1}}\tau\right\|_{\Xsz'}^2 + \lambda \vep \tau \left\|\dfrac{u_{n_k}-u_{n_k-1}}\tau\right\|_{\Xsigz}^2\\
  &\leq \dfrac{C_3}{(k-2)\tau} + \lambda C_\vep C_1 + \lambda \vep \dfrac{C_3}{2(k-2)\tau}.
 \end{align*}
 Let $t_0 > 0$ be fixed and take $k \in \N$ such that $(k-1)\tau < t_0 \leq k\tau $. Hence, it follows by $C_3 = 2C_2(1+k\tau) + C_1$ that, for any $m \in \N \cap (k+1,N]$,
 \begin{align*}
  \dfrac 1 2 \|w_m\|_{\Xsz}^2 + \dfrac 1 2 \sum_{n = k + 1}^m \tau \left\|\dfrac{u_n-u_{n-1}}\tau\right\|_{\Xsigz}^2
  \leq \dfrac{C_3}{t_0-2\tau} + \lambda C_\vep C_1 + \lambda \vep \dfrac{C_3}{2(t_0-2\tau)}\\
  \leq \dfrac{4C_2(1+t_0+\tau)+2C_1}{t_0} + \lambda C_\vep C_1 + \lambda \vep \dfrac{2C_2(1+t_0+\tau)+C_1}{t_0}
 \end{align*}
 for $\tau > 0$ small enough (so that $t_0 - 2\tau > t_0/2$). Here we recall again that $C_1$, $C_2$, $\vep$, $C_\vep$ are independent of $m$, $\tau$, $N$, $T$ and $t_0$.
  
 Now, recall the piecewise constant and linear interpolants of $(w_n)$ and $(u_n)$, in particular,
 $$
 \bar w_\tau(t) := w_n, \quad u_\tau(t) := \dfrac{t_n-t}\tau u_{n-1} + \dfrac{t-t_{n-1}}\tau u_n \quad \mbox{ for } \ t \in [t_{n-1}, t_n),
 $$
 where $t_n := n\tau$, for $n = 1,2,\ldots,N$. Hence there exists a constant $C_0 \geq 0$ independent of $t_0$, $\tau$, $N$ and $T$ such that
 $$
 \|\bar w_\tau(t)\|_{\Xsz}^2 + \int^t_{t_0+2\tau} \|\partial_r u_\tau(r)\|_{\Xsigz}^2 \, \d r \leq C_0\left(1 + t_0^{-1}\right) \quad \mbox{ for all } \ t \geq t_0 + 2\tau \ \mbox{ and } \ t_0 > 0.
 $$
Fix $\delta > t_0$. Then it follows that
 \begin{alignat*}{4}
  \bar w_\tau &\to w \quad &&\mbox{ weakly star in } L^\infty(\delta,T;\Xsz),\\
  \partial_t u_\tau &\to \partial_t u \quad &&\mbox{ weakly in } L^2(\delta,T;\Xsigz)
 \end{alignat*}
 as $\tau \to 0_+$. From the arbitrariness of $T > 0$ and the fact that $C_0$ is independent of $T$, one concludes that $w \in L^\infty(t_0,\infty;\Xsz)$, $\partial_t u \in L^2(t_0,\infty;\Xsigz)$ and  
 $$
 \|w(t)\|_{\Xsz}^2 + \int^t_{t_0} \|\partial_r u(r)\|_{\Xsigz}^2 \, \d r \leq C_0\left(1 + t_0^{-1}\right) \quad \mbox{ for any } \ t \geq t_0.
 $$
Thus \eqref{reg:13} follows.

\end{document}